\DeclarePairedDelimiter\abs{\lvert}{\rvert}%
\DeclarePairedDelimiter\norm{\lVert}{\rVert}%
\def\CC{{\mathbb C}}
\def\FF{{\mathbb F}}
\def\NN{{\mathbb N}}
\def\QQ{{\mathbb Q}}
\def\QQ{{\mathbb Q}}
\def\RR{{\mathbb R}}
\def\ZZ{{\mathbb Z}}
\def\0{{\mathbf 0}}
\def\1{{\mathbf 1}}
\def\Acal{{\mathcal A}}
\def\Bcal{{\mathcal B}}
\def\Hcal{{\mathcal H}}
\def\Ocal{{\mathcal O}}
\def\Kbar{{\bar K}}
\def\dist{\mathrm{dist}}
\def\Per{\mathrm{Per}}
\def\I{\mathrm{I}}
\def\II{\mathrm{II}}
\def\III{\mathrm{III}}
\def\sup{\mathrm{sup}}
\def\max{\mathrm{max}}
\theoremstyle{plain}
\newtheorem{thm}{Theorem}
\newtheorem{conj}{Conjecture}
\newtheorem{cor}[thm]{Corollary}
\newtheorem{prop}[thm]{Proposition}
\newtheorem{lem}[thm]{Lemma}
\theoremstyle{definition}
\newtheorem{rem}{Remark}
\title[Non-Archimedean H\'enon maps]{Non-Archimedean H\'enon maps, \\ attractors, and horseshoes}
\author{Kenneth Allen}
\address{Kenneth Allen; Department of Mathematical Sciences; University of Massachusetts Lowell; 
Lowell MA 01854 U.S.A.}
\email{kenneth\_allen1@student.uml.edu}
\author{David DeMark}
\address{David DeMark; Department of Mathematics and Statistics; Carleton College; Northfield MN 55057 U.S.A.}
\email{demarkd@carleton.edu}
\author{Clayton Petsche}
\address{Clayton Petsche; Department of Mathematics; Oregon State University; Corvallis OR 97331 U.S.A.}
\email{petschec@math.oregonstate.edu}
\date{Latest revision January 5 2018}
\keywords{$p$-adic or non-Archimedean dynamical systems, H\'enon maps, strange attractors, symbolic dynamics}
\subjclass[2010]{37P20, 37D45, 37B10, 11S82}
\begin{document}

\begin{abstract}
We study the dynamics of the H\'enon map defined over complete, locally compact non-Archimedean fields of odd residue characteristic.  We establish basic properties of its one-sided and two-sided filled Julia sets, and we determine, for each H\'enon map, whether these sets are empty or nonempty, whether they are bounded or unbounded, and whether they are equal to the unit ball or not.  On a certain region of the parameter space we show that the filled Julia set is an attractor.  We prove that, for infinitely many distinct H\'enon maps over $\QQ_3$, this attractor is infinite and supports an SRB-type measure describing the distribution of all nearby forward orbits.  We include some numerical calculations which suggest the existence of such infinite attractors over $\QQ_5$ and $\QQ_7$ as well.  On a different region of the parameter space, we show that the H\'enon map is topologically conjugate on its filled Julia set to the two-sided shift map on the space of bisequences in two symbols.
\end{abstract}

\maketitle


\section{Introduction}

\subsection{Summary of results}  Let $K$ be a field which is complete and locally compact with respect to a nontrivial, non-Archimedean absolute value $|\cdot|$, and such that the associated residue field has odd characteristic.  Familiar examples include the field $\QQ_p$ of $p$-adic numbers for some odd prime $p$, and the fraction field $\FF_p(\!(T)\!)$ of the formal power series ring $\FF_p[\![T]\!]$ over the finite field $\FF_p$ for some odd prime $p$.  More generally, $K$ could be any finite extension of $\QQ_p$ or $\FF_p(\!(T)\!)$.

In this paper we study the dynamics of the H\'enon map $\phi_{a,b}:K^2\to K^2$ defined by
\begin{equation}\label{HenonDef}
\phi_{a,b}(x,y) = (a+by-x^2,x) \hskip1cm a,b\in K, b\neq0
\end{equation}
over such fields.  It is easily checked that $\phi_{a,b}$ is an automorphism of the plane $K^2$, with inverse $\phi_{a,b}^{-1}:K^2\to K^2$ defined by 
\begin{equation}\label{HenonInverse}
\textstyle\phi_{a,b}^{-1}(x,y) = (y,\frac{1}{b}(-a+x+y^2)).
\end{equation}

The definition $(\ref{HenonDef})$ represents one of several standard forms for the H\'enon map which are commonly found in the literature.  As described in Friedland-Milnor \cite{MR991490}, it can be shown that every quadratic polynomial automorphism of $K^2$ of dynamical degree $>1$ (in the sense of \cite{MR991490}) is affine-conjugate to a map of the form $(\ref{HenonDef})$; and moreover, no two distinct maps of the form $(\ref{HenonDef})$ are affine-conjugate to one another.  It is therefore natural to set 
\begin{equation*}
\Hcal=\{(a,b)\in K\times K\mid b\neq0\}
\end{equation*}
and to consider $\Hcal$ as the space parametrizing all distinct H\'enon maps over $K$.

For each integer $n\geq1$, denote by $\phi^n_{a,b}$ the $n$-fold composition of $\phi_{a,b}$ with itself, and by $\phi^{-n}_{a,b}$ the $n$-fold composition of $\phi^{-1}_{a,b}$ with itself.  Naturally, $\phi^0_{a,b}$ is interpreted to be the identity map on $K^2$.  Consider the three sets 
\begin{equation}\label{FilledJuliaSetsDef}
\begin{split}
J^+(\phi_{a,b}) & = \{(x,y)\in K^2\mid \|\phi^n_{a,b}(x,y)\|\text{ is bounded as }n\to+\infty\} \\
J^-(\phi_{a,b}) & = \{(x,y)\in K^2\mid \|\phi^n_{a,b}(x,y)\|\text{ is bounded as }n\to-\infty\} \\
J(\phi_{a,b}) & = J^+(\phi_{a,b})\cap J^-(\phi_{a,b}),
\end{split}
\end{equation}
the forward filled Julia set, backward filled Julia set, and (two-sided) filled Julia set, respectively.  As these boundedness properties are shared by all elements of the orbit $\{\phi_{a,b}^n(x,y)\mid n\in\ZZ\}$ of any given point $(x,y)\in K^2$, each of the three sets $J,J^\pm$ is $\phi_{a,b}$-invariant; in other words $\phi_{a,b}(J)=J$, and similarly for $J^\pm$.

The main goal of this work is to describe the filled Julia sets $J(\phi_{a,b})$ and $J^\pm(\phi_{a,b})$, to describe the dynamical behavior of the map $\phi_{a,b}$ on these sets, and to study how the dynamical properties of $\phi_{a,b}$ vary as the parameters $a$ and $b$ range over all possible values in the parameter space $\Hcal$.

In order to state our main results, we must describe a partition of the parameter space $\Hcal$ into the four regions
\begin{equation}\label{Regions}
\begin{split}
\Hcal_{\I} & = \{(a,b)\in \Hcal\mid |a|\leq 1, |b|=1\} \\
\Hcal_{\II}^+ & = \{(a,b)\in \Hcal\mid |a|\leq 1, |b|<1\} \\
\Hcal_{\II}^- & = \{(a,b)\in \Hcal\mid |a|\leq |b|^2, |b|>1\} \\
\Hcal_{\III} & = \{(a,b)\in \Hcal\mid |a|> \max(1,|b|^2)\}.
\end{split}
\end{equation}

\begin{figure}[hbt]
\begin{center}
\includegraphics[width=80mm]{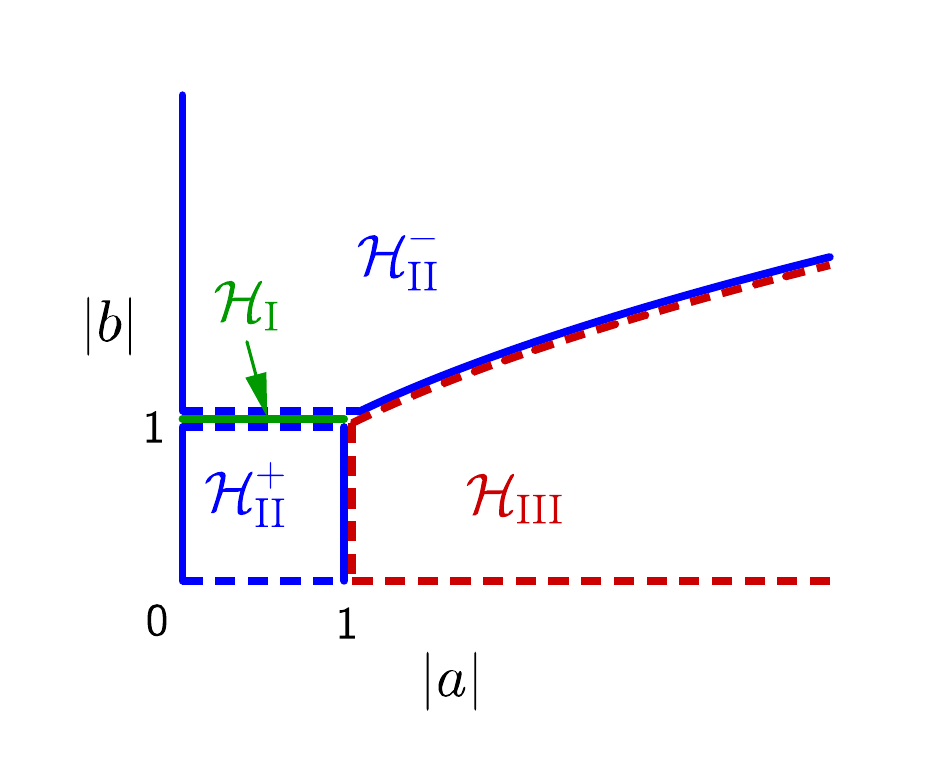}
\caption{The partition of the parameter space $\Hcal$ into the four regions $\Hcal_\I$, $\Hcal_\II^+$, $\Hcal_\II^-$, and $\Hcal_\III$, depicted in the $(|a|,|b|)$-plane.}
\label{ParameterSpaceImage}
\end{center}
\end{figure}

To give some intuition for the partition described in $(\ref{Regions})$, we first point out that the inverse of a H\'enon map $\phi_{a,b}$ in standard form $(\ref{HenonDef})$ is not a map occurring in the same form.  But as observed by Devaney-Nitecki \cite{MR539548}, $\phi_{a,b}^{-1}$ is linearly conjugate to the map  $\phi_{a/b^2,1/b}$.  This idea gives rise naturally to the involution 
\begin{equation}\label{InvolutionDef}
\textstyle\iota:\Hcal\to\Hcal \hskip1cm \iota(a,b)=(\frac{a}{b^2},\frac{1}{b})
\end{equation}
on the parameter space of all H\'enon maps; see Proposition~\ref{conj winv} for more details.  Using the involution, we may alternately characterize the partition $(\ref{Regions})$ by
\begin{equation}\label{RegionsAlt}
\begin{split}
\Hcal_{\I} & = \{(a,b)\in \Hcal\mid \|(a,b)\|\leq1,\|\iota(a,b)\|\leq1\} \\
\Hcal_{\II}^+ & = \{(a,b)\in \Hcal\mid \|(a,b)\|\leq1,\|\iota(a,b)\|>1\} \\
\Hcal_{\II}^- & = \{(a,b)\in \Hcal\mid \|(a,b)\|>1,\|\iota(a,b)\|\leq1\} \\
\Hcal_{\III} & = \{(a,b)\in \Hcal\mid \|(a,b)\|>1,\|\iota(a,b)\|>1\}.
\end{split}
\end{equation}

Next, we observe that $\Hcal_\I$ is precisely the region in which both the H\'enon map $\phi_{a,b}$ and its inverse $\phi_{a,b}^{-1}$ have coefficients in the ring $\Ocal$ of integers of $K$.  Using properties of the non-Archimedean absolute value it follows that $\phi_{a,b}$ restricts to a bijection $\phi_{a,b}:\Ocal^2\to\Ocal^2$, and reduces to a H\'enon map $\phi_{\overline{a},\overline{b}}:\FF_K^2\to\FF_K^2$ defined over the residue field $\FF_K$ of $K$.

When $(a,b)\in\Hcal_\II^+$, the map $\phi_{a,b}$ has coefficients in $\Ocal$, but $\phi_{a,b}^{-1}$ does not.  As a consequence, the restriction $\phi_{a,b}:B_1(0,0)\to B_1(0,0)$ to the closed unit ball of $K^2$ fails to be surjective, which leads to an asymmetry in the forward and backward dynamics of $\phi_{a,b}$.  For example, $J^-(\phi_{a,b})$ is bounded while $J^+(\phi_{a,b})$ is unbounded, and $J(\phi_{a,b})$ has the structure of a trapped attracting set in the sense of Milnor \cite{milnor:dynamicslectures}.

When $(a,b)\in\Hcal_\II^-$, the map $\phi_{a,b}$ does not have coefficients in $\Ocal$, but $\phi^{-1}_{a,b}$ is linearly conjugate to $\phi_{\iota(a,b)}$ for $\iota(a,b)\in\Hcal_\II^+$.  Consequently, maps in regions $\Hcal_\II^+$ and $\Hcal_\II^-$ exhibit identical dynamics but with a reversal of the (discrete) time direction.

Finally, when $(a,b)\in\Hcal_\III$, neither $\phi_{a,b}$ nor $\phi^{-1}_{a,b}$ is linearly conjugate to a H\'enon map in the form $(\ref{HenonDef})$ with coefficients in $\Ocal$.  As we shall see, this leads to rich dynamics related to the Smale horseshoe map (\cite{devaney:dynamicsbook} $\S$2.3).  Our main result for $(a,b)\in\Hcal_\III$ is a non-Archimedean analogue of a theorem on the real H\'enon map due to Devaney-Nitecki \cite{MR539548}.

We now state a theorem which summarizes the general results of this paper.  See $\S$\ref{RecurrentSect} for the definition of a recurrent point, and see $\S$\ref{StrageAttDefSect} for the definition of an attractor.

\begin{thm}\label{MainTheoremIntro}
Let $(a,b)\in\Hcal$ and let $\phi_{a,b}:K^2\to K^2$ be the associated H\'enon map defined in $(\ref{HenonDef})$.
\begin{itemize}
	\item[{\bf (a)}]  $J(\phi_{a,b})$ is compact.  It is empty if and only if $(a,b)\in\Hcal_\III$ and $a$ is not a square in $K$.
	\item[{\bf (b)}]  $(a,b)\in\Hcal_\I$ if and only if $J(\phi_{a,b})$is equal to the closed unit ball $B_1(0,0)$.
	\item[{\bf (c)}]  If $(a,b)\in\Hcal_\I\cup\Hcal_\II^+$ then $J(\phi_{a,b})$ is equal to the set of  recurrent points associated to $\phi_{a,b}$.
	\item[{\bf (d)}]  If $(a,b)\in\Hcal_\II^+$ then $J(\phi_{a,b})$ is an attractor for $\phi_{a,b}$.
	\item[{\bf (e)}]  If $(a,b)\in\Hcal_\III$ and $a$ is a square in $K$, then the restriction of $\phi_{a,b}$ to $J(\phi_{a,b})$ is topologically conjugate to the two-sided shift map on the space of bisequences in two symbols.
\end{itemize}
\end{thm}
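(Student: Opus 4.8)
The plan is to build the conjugacy explicitly as an \emph{itinerary map}. Write $\Sigma=\{0,1\}^{\ZZ}$ with the product topology and let $\sigma$ be the left shift on $\Sigma$. Since $a$ is a square, fix $\alpha\in K$ with $\alpha^2=a$; then $|\alpha|=|a|^{1/2}=:r$, and membership of $(a,b)$ in $\Hcal_{\III}$ is precisely the inequality $r>\max(1,|b|)$, on which everything will hinge. For $(x,y)\in K^2$ I write $(x_n,y_n)=\phi_{a,b}^n(x,y)$, so that $y_n=x_{n-1}$ and $x_{n+1}=a+bx_{n-1}-x_n^2$ for all $n\in\ZZ$; thus $(x,y)\mapsto(x_n)_{n\in\ZZ}$ identifies $J(\phi_{a,b})$ with the set of bounded two-sided sequences in $K$ obeying this recursion.

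The first step is an a priori estimate: every such bounded sequence in fact satisfies $|x_n|=r$ for all $n$. To see this, let $M=\max_n|x_n|$, which is attained since the value group is discrete. If $M>r$, pick $n$ with $|x_n|=M$; then in $x_n^2=a+bx_{n-1}-x_{n+1}$ the three terms on the right have absolute values $|a|=r^2$, $|bx_{n-1}|\le|b|M$ and $|x_{n+1}|\le M$, all strictly below $M^2$ (here I use $|b|<r<M$ and $r>1$), contradicting $|x_n^2|=M^2$. Hence $M\le r$, and now in the same identity the term $a$ strictly dominates, giving $|x_n|=r$ for every $n$. Consequently $w_n:=a+bx_{n-1}-x_{n+1}=x_n^2$ lies in the coset $a(1+\mfr)$, so by Hensel's lemma — this is where odd residue characteristic enters — $w_n$ has exactly two square roots in $K$, and since $|2\alpha|=|\alpha|=r$ these lie in the two \emph{disjoint} clopen balls $B_0=\{z:|z-\alpha|<r\}$ and $B_1=\{z:|z+\alpha|<r\}$, one in each. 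I can therefore define $\tau\colon J(\phi_{a,b})\to\Sigma$ by $\tau(x,y)=(s_n)_{n\in\ZZ}$ where $s_n$ is the index with $x_n\in B_{s_n}$; continuity of $\tau$ is immediate from continuity of $\phi_{a,b}^{\pm1}$, and $\tau\circ\phi_{a,b}=\sigma\circ\tau$ is just a reindexing.

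It remains to show $\tau$ is bijective, and I expect this — the realization of an arbitrary itinerary — to be the crux. Both injectivity and surjectivity will come from a single contraction-mapping argument. Given $(s_n)\in\Sigma$, put $X=\prod_{n\in\ZZ}B_{s_n}$, a nonempty complete metric space under the sup-metric, every point of which has all coordinates of absolute value $r$; define $T\colon X\to X$ by letting $T(x)_n$ be the unique square root of $a+bx_{n-1}-x_{n+1}$ lying in $B_{s_n}$ (well defined because $|bx_{n-1}|<r^2$ and $|x_{n+1}|<r^2$ force $a+bx_{n-1}-x_{n+1}\in a(1+\mfr)$). The identity $T(x)_n^2-T(x')_n^2=b(x_{n-1}-x_{n-1}')-(x_{n+1}-x_{n+1}')$ together with $|T(x)_n+T(x')_n|=r$ (the two summands lie in $B_{s_n}$ and sum to $\pm2\alpha$ plus something smaller) shows $T$ is a contraction with ratio $\max(|b|,1)/r$, which is $<1$ exactly because $(a,b)\in\Hcal_{\III}$. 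Its unique fixed point is a bounded orbit with itinerary $(s_n)$, giving surjectivity; and any two points of $J(\phi_{a,b})$ with the same itinerary have orbits that both lie in $X$ and are both fixed by $T$, so they agree, giving injectivity. Finally, a continuous bijection from $J(\phi_{a,b})$, which is compact by Theorem~\ref{MainTheoremIntro}(a), onto the Hausdorff space $\Sigma$ is automatically a homeomorphism, so $\tau$ conjugates $\phi_{a,b}|_{J(\phi_{a,b})}$ to $\sigma$. The main obstacle is the surjectivity/contraction step; the a priori bound in the first step is what makes the itinerary well defined along every bounded orbit and what supplies the inequalities that the contraction needs.
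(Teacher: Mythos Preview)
Your proof of part \textbf{(e)} is correct and takes a genuinely different route from the paper's. The paper follows the classical Devaney--Nitecki/Moser horseshoe template: it builds families of \emph{vertical} and \emph{horizontal} Lipschitz curves in the box $S=I\times I$ (where $I=D_r(0)$), proving via separate Banach fixed-point arguments that $\phi^{-1}$ sends each vertical tube to two thinner vertical tubes and $\phi$ sends each horizontal tube to two thinner horizontal tubes. Iterating, $J^+(\phi)\cap S$ is exhibited as a union of vertical curves $V(f^s)$ indexed by one-sided forward itineraries, $J^-(\phi)\cap S$ as a union of horizontal curves $H(g^s)$ indexed by backward itineraries, and a further fixed-point lemma shows each pair $V(f^s)\cap H(g^{s'})$ is a single point, defining the conjugacy $\omega$.

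Your approach bypasses the curve-and-tube geometry entirely: after the a~priori estimate $|x_n|=r$ (which reproves the inclusion $J(\phi)\subseteq S_R$ from the paper's filtration), you encode an orbit as a single bisequence and run one contraction on the product space $\prod_n B_{s_n}$. This is more economical and treats past and future symmetrically; the paper needs the involution $\iota$ to transfer the vertical-tube lemma to horizontal tubes. What the paper's approach buys is the finer description of $J^\pm(\phi)\cap S$ as unions of Lipschitz graphs, which it then uses (see the remark after Theorem~\ref{TopologicalConjThm}) to show that $J^\pm(\phi)$ are unbounded in region $\Hcal_\III$---information your argument does not directly yield. Both approaches ultimately rest on the same inequality $\max(1,|b|)<r$, which is exactly the defining condition of $\Hcal_\III$, and both invoke odd residue characteristic at the square-root step.
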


The requirement in part {\bf (e)} that $a$ is a square in $K$ is not limiting, as this situation can always be obtained by replacing $K$ with its (at most quadratic) extension $K(\sqrt{a})$.

Since H\'enon ~\cite{MR0422932} introduced his namesake map in 1976, it has been the object of no small amount of study in the real and complex settings.  This deceptively simple family of polynomial maps has given rise to a surprising number of interesting dynamical features; see the surveys Robinson ~\cite{MR1792240} and Devaney \cite{devaney:dynamicsbook}.  

Devaney-Nitecki \cite{MR539548} proved in 1979 that, for certain values of its coefficients, the real H\'enon map is topologically conjugate to the two-sided shift map on the space of bisequences in two symbols.  Their proof relies on the fact that the real H\'enon map is essentially an algebraic manifestation of the Smale horseshoe map (\cite{devaney:dynamicsbook} $\S$2.3).  Our proof of Theorem~\ref{MainTheoremIntro} {\bf (e)} follows a fairly close parallel with the proof of Devaney-Nitcki \cite{MR539548} (see also Moser \cite{MR1829194}), but the details are rather complicated, and the analogy is not always so straightforward, as some of the analytic tools available over $\RR$ do not carry over to the non-Archimedean situation.

The existence of a strange attractor admitted by the real H\'enon map was proposed by H\'enon \cite{MR0422932} himself in 1976, and the first proof of the existence of such an attractor was given by Benedicks-Carleson \cite{MR1087346} in 1991.  Mora-Viana \cite{MR1237897} showed the existence of an infinite class of parameter values inducing a strange attractor.  

To study the attractors desribed in Theorem~\ref{MainTheoremIntro}, in $\S$~\ref{RegionsIandII+} we relate the structure of $J(\phi_{a,b})$ to periodic cycles with respect to the finite dynamical systems that $\phi_{a,b}$ induces on balls in $B_1(0,0)$, borrowing ideas from Anashin-Khrennikov \cite{MR2533085}.  We conjecture that, for each complete, locally compact non-Archimedean field with odd residue characteristic, there always exist some values of $(a,b)$ in $\Hcal_\II^+$ for which $J(\phi_{a,b})$ is an infinite set.  The following theorem verifies this conjecture over the field $\QQ_3$ of $3$-adic numbers.

\begin{thm}\label{StrangeAttractorThmIntro}
Suppose that $a\in \QQ_3$ satisfies $|a-2|_3\leq 1/9$, and define $\phi:\QQ_3^2\to\QQ_3^2$ by $\phi(x,y)=(a+3y-x^2,x)$.  The attractor $J(\phi)$ is uncountably infinite, has Haar measure zero in $\QQ_3^2$, and contains no periodic points.  Each point of $J(\phi)$ has dense forward orbit in $J(\phi)$.  There exists a probability measure $\mu_\phi$ supported on $J(\phi)$ with the property that the forward orbit of any point in $\ZZ_3^2$ is $\mu_\phi$-equidistributed.
\end{thm}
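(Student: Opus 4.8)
The plan is to show that the dynamical system $(J(\phi),\phi)$ is topologically conjugate to the $3$-adic odometer $x\mapsto x+1$ on $\ZZ_3$; every assertion of the theorem is then a standard property of odometers. Since $|a|=|2+(a-2)|=1$ and $|3|<1$, we have $(a,3)\in\Hcal_{\II}^+$, so by the results of \S\ref{RegionsIandII+} the filled Julia set is the trapped attracting set $J(\phi)=\bigcap_{m\ge0}\phi^m(\Ocal^2)$ with $\Ocal=\ZZ_3$. As $\phi$ has integral coefficients, reduction modulo $3^n$ carries $\phi$ to a self-map $\phi_n$ of $X_n:=(\ZZ/3^n\ZZ)^2$ which it intertwines; a routine compactness argument identifies the image of $J(\phi)$ under reduction mod $3^n$ with the set $E_n:=\bigcap_{m\ge0}\phi_n^m(X_n)$ of periodic points of $\phi_n$, and shows $J(\phi)=\varprojlim_n E_n$ as topological dynamical systems.

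Everything then reduces to the following combinatorial statement, proved below: \emph{for every $n\ge1$, $E_n$ is a single cycle of $\phi_n$ of length $3^{n-1}$}. Granting this, $\varprojlim_n E_n\cong\ZZ_3$ is a Cantor set on which $\phi$ acts compatibly as a single cycle of length $3^{n-1}$ at level $n$ for every $n$ -- i.e. as the odometer. Consequently $J(\phi)$ is uncountable, every forward orbit is dense, and there are no periodic points (a periodic point would have period divisible by $3^{n-1}$ for all $n$); moreover $J(\phi)$ lies in the union of the $3^{n-1}$ balls of radius $3^{-n}$ over $E_n$, so its Haar measure is at most $3^{n-1}\cdot3^{-2n}\to0$. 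Taking $\mu_\phi$ to be the pushforward of Haar measure on $\ZZ_3$ -- equivalently, the $\phi$-invariant probability measure assigning mass $3^{-(n-1)}$ to each cylinder over a point of $E_n$ -- the forward orbit of any $z\in\ZZ_3^2$ reduces mod $3^n$ to a sequence that is eventually the $3^{n-1}$-cycle $E_n$ traversed once per period, so it meets each radius-$3^{-n}$ ball with the frequency prescribed by $\mu_\phi$; since such balls generate the topology, the orbit is $\mu_\phi$-equidistributed.

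To prove the combinatorial statement I induct on $n$ by a fibration argument. For $n=1$ one checks over $\FF_3$ that $\phi_1(x,y)=(2-x^2,x)$ drives every point to the fixed point $(1,1)$, so $E_1=\{(1,1)\}$. Assume $E_n$ is a single $3^{n-1}$-cycle all of whose points have both coordinates $\equiv1\pmod3$. The fibers of $X_{n+1}\to X_n$ over the points of $E_n$ form a bundle of copies of $(\ZZ/3\ZZ)^2$ over that cycle; because $b=3\equiv0\pmod3$ and the first coordinate along $E_n$ is $\equiv1\pmod3$, a short computation shows $\phi_{n+1}$ acts on the fiber over $P\in E_n$ as an affine map $(s,t)\mapsto(s+\alpha(P),\,s+\beta(P))$. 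The first-return map around the cycle then has the form $(s,t)\mapsto(s+A,\,s+B)$, whose eventual image is a line on which it acts by $s\mapsto s+A$; hence $E_{n+1}$ has exactly $3$ points over each point of $E_n$, and $\phi_{n+1}|_{E_{n+1}}$ is a single $3^n$-cycle precisely when $A\not\equiv0\pmod3$ (otherwise it is three $3^{n-1}$-cycles). Unwinding the definitions identifies $A$ with $A^{(n)}:=3^{-n}\bigl(\text{first coordinate of }\phi^{3^{n-1}}(\tilde P)-\tilde P\bigr)\bmod3$ for an arbitrary lift $\tilde P\in\Ocal^2$ of a point of $E_n$; one checks this is independent of $\tilde P$.

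It remains to verify $A^{(n)}\not\equiv0\pmod3$ for all $n\ge1$. Direct computation gives $A^{(1)}\equiv1$ and $A^{(2)}\equiv2$, using only $a\equiv2\pmod9$. For $n\ge3$ set $L'=3^{n-2}$ and $\vec\delta=\phi^{L'}(\tilde P)-\tilde P$; by the inductive hypothesis $\delta_1$ has $3$-adic valuation exactly $n-1$, and since $\phi^{3L'}(\tilde P)\equiv\tilde P\pmod{3^n}$ one gets $\delta_2\equiv-2\delta_1\pmod{3^n}$. Expanding the polynomial identity $\phi^{3L'}=\phi^{L'}\circ\phi^{L'}\circ\phi^{L'}$ about $\tilde P$ yields $\phi^{3L'}(\tilde P)-\tilde P\equiv(I+M+M^2)\vec\delta\pmod{3^{n+1}}$ with $M=D\phi^{L'}(\tilde P)$, the quadratic remainder being divisible by $3^{2(n-1)}$ and hence by $3^{n+1}$. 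As every iterate of $\tilde P$ has first coordinate $\equiv1\pmod3$, $M\equiv\bigl(\begin{smallmatrix}1&0\\1&0\end{smallmatrix}\bigr)\pmod3$; computing the first row of $I+M+M^2$ modulo $9$ (using $1+x+x^2\equiv3\pmod9$ when $x\equiv1\pmod3$) and combining with $\delta_2\equiv-2\delta_1$ collapses everything to $A^{(n)}\equiv A^{(n-1)}\pmod3$. Thus $A^{(n)}\equiv2\ne0$ for all $n\ge2$, completing the induction. The delicate point throughout is the valuation bookkeeping in this last step: several quantities must be tracked one $3$-adic digit beyond where they first become visible, and one must determine precisely which cross-terms survive modulo $3^{n+1}$.
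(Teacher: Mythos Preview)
Your proof is correct, and the odometer identification is a clean way to package the conclusions: once $E_n$ is shown to be a single $3^{n-1}$-cycle for every $n$, the inverse limit is the $3$-adic odometer, and uncountability, Haar-measure zero, absence of periodic points, minimality, and unique ergodicity all follow at once from standard odometer theory.

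Your inductive step is genuinely different from the paper's. The paper (Lemma~\ref{psikLemma} and the proof of Theorem~\ref{StrangeAttractorThm}) uses a renormalization scheme: after conjugating so the fixed ball sits at the origin, it sets $\psi_1=\psi$ and $\psi_{k+1}=h_k^{-1}\circ\psi_k^3\circ h_k$ with $h_k(x,y)=(3x,3y+3c_k)$, and proves by induction the single congruence $\psi_k(x,y)\equiv(3+x,3c_k)$ modulo the ideal $I=(9,3x,3y,x^2,xy,y^2)$; this congruence by itself forces the $3$-cycle structure at every scale. You instead linearize $\phi^{3^{n-2}}$ about a lift of a point of $E_n$ and track the fiber ``rotation number'' $A^{(n)}$, establishing $A^{(n)}\equiv A^{(n-1)}\pmod 3$ via the Taylor identity $\phi^{3L'}(\tilde P)-\tilde P\equiv(I+M+M^2)\vec\delta$ and the mod-$9$ collapse of the first row of $I+M+M^2$. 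The paper's method is more coordinate-free (one ideal, one congruence, propagated verbatim), while yours makes the derivative structure and the odometer conjugacy explicit. For part~{\bf(c)} the paper builds $\mu_\phi$ as a weak limit of averages over $\Per_k$ via Prokhorov's theorem and a direct Ces\`aro estimate; your appeal to unique ergodicity of the odometer is shorter.

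Two spots in your writeup deserve one more line each: the independence of $A^{(n)}$ from the lift $\tilde P$ holds only for lifts of points of $E_n$ (not merely $E_{n-1}$ --- e.g.\ $\tilde P=(1,1)$ gives the wrong value for $A^{(2)}$, while $\tilde P=(4,1)$ gives $2$); and the claim that $A^{(2)}\equiv 2$ uses only $a\equiv 2\pmod 9$ rather than $a\bmod 27$ needs a first-variation-in-$a$ check, which goes through because the relevant sum of Jacobians applied to $e_1$ has first coordinate $\equiv 0\pmod 3$.
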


The measure $\mu_\phi$ describing the distribution of forward orbits near the attractor plays a role analogous to that of the SRB measure (Sinai-Ruelle-Bowen) originating in the theory of Anosov and Axiom A dynamical systems; see \cite{MR1933431}.

While there seems to be no consensus among dynamicists on the proper definition of a strange attractor (see for example \cite{MR2255038}), certainly many of the properties listed in the statement of Theorem~\ref{StrangeAttractorThmIntro} are typical of attractors to which the term ``strange'' is generally applied.  On the other hand, unlike the real H\'enon map, the dynamics described in Theorem~\ref{StrangeAttractorThmIntro} are not chaotic: for any $(a,b)$ in region $\Hcal_\II^+$ of parameter space, the H\'enon map $\phi_{a,b}$ is nonexpanding on $J(\phi_{a,b})$ and hence the forward orbits of nearby points do not diverge from one another.  Moreover, inspection of the proof of Theorem~\ref{StrangeAttractorThmIntro} shows that the Hausdorff dimension of the attractor $J(\phi)$ is $1$. 

In $\S$~\ref{NumericalCalc} we include some numerical calculations which suggest that attractors similar to the one described in Theorem~\ref{StrangeAttractorThmIntro} also exist over $\QQ_5$ and $\QQ_7$.  A proof that such attractors exist for all odd primes $p$ would be extremely interesting and a significant advance over the somewhat ad hoc proof of Theorem~\ref{StrangeAttractorThmIntro}.  In $\S$~\ref{StrangeAttSection} we also give an infinite family of distinct $3$-adic H\'enon maps for which the attractor $J(\phi_{a,b})$ is finite (in fact an attracting $2$-cycle).  

Some researchers have studied the arithmetic aspects of the H\'enon map and more general plane polynomial automorphisms, e.g. Silverman ~\cite{silverman:henonmap}, Denis \cite{MR1355926}, Ingram ~\cite{MR3180596} and others.  The purely local dynamics of the H\'enon map over non-Archimedean fields has been relatively neglected; but see Marcello \cite{MR1988948}.  Woodcock-Smart \cite{MR1678087} and Arrowsmith-Vivaldi \cite{MR1218804} have studied $p$-adic automorphisms related to horseshoe dynamics, but for different maps than the standard H\'enon map $(\ref{HenonDef})$.  One of the inspirations behind our work is the paper of Benedetto-Briend-Perdry \cite{MR2394889}, which is an analogous study in the somewhat simpler (non-invertible) setting of quadratic polynomial maps in one non-Archimedean variable.  

Our standing assumption of odd residue characteristic unifies our exposition and simplifies many of our proofs.  But the residue characteristic $2$ case would also be very interesting to consider, and much of this work would apply with suitable modifications.

We have chosen to focus on the setting of a locally compact ground field, but it would also be interesting to consider the dynamics of H\'enon maps over a complete and algebraically closed non-Archimedean field, both on the classical affine plane, and in the sense of Berkovich.  Some of our results should carry over easily; for example, the criterion for good reduction stated in Theorem~\ref{MainTheoremIntro} {\bf (b)} should still hold with basically the same proof.  We also point out that, in the setting of Theorem~\ref{MainTheoremIntro} {\bf (e)}, the topological conjugacy with the shift map implies that extending the ground field $K$ does not produce any new points in the filled Julia set.


\subsection{Plan of the paper}  In $\S$~\ref{PrelimSection} we establish some notation and terminology and discuss properties of non-Archimedean fields.  In $\S$~\ref{GeneralPopSection} we study fixed points and $2$-cycles, and following Devaney-Nitecki ~\cite{MR539548} and Bedford-Smillie ~\cite{MR1653043} we establish a filtration property satisfied by the H\'enon map.  Using this filtration we investigate basic topological and set-theoretic properties of the filled Julia sets $J^\pm(\phi_{a,b})$ and $J(\phi_{a,b})$ as $(a,b)$ ranges over the four regions of the parameter space.  In $\S$~\ref{RegionsIandII+} we consider the case $(a,b)\in\Hcal_\I\cup\Hcal_\II^+$, and we study the dynamics of the restriction $\phi_{a,b}:B_1(0,0)\to B_1(0,0)$ to the closed unit ball of $K^2$, leading to a proof that $J(\phi_{a,b})$ is equal to the recurrent set of $\phi_{a,b}$.  Restricting to the case $(a,b)\in\Hcal_\II^+$ we prove that $J(\phi_{a,b})$ is an attractor, and we study the question of whether or not this attractor is a finite union of attracting cycles.  Over $\QQ_3$, we prove that both situations can occur for infinitely many distinct H\'enon maps.  Finally, in $\S$~\ref{RegionIII} we prove Theorem~\ref{MainTheoremIntro} {\bf (e)}, establishing horseshoe dynamics in region $\Hcal_\III$.


\subsection{Acknowledgements}  This work was done during the Summer 2016 REU program in mathematics at Oregon State University, with support by National Science Foundation Grant DMS-1359173.  We thank the anonymous referees for many helpful suggestions, and simplifications of the proofs of Theorem~\ref{StrangeAttractorThm} and Theorem~\ref{TopologicalConjThm}.


\section{Non-Archimedean fields}\label{PrelimSection}

Throughout this paper, $K$ is a field which is complete and locally compact with respect to a nontrivial, non-Archimedean absolute value $|\cdot|$, and such that the associated residue field has odd characteristic.  In particular, we have the strong triangle inequality $|x+y|\leq\max(|x|,|y|)$ for all $x,y\in K$, and a standard argument (\cite{MR1488696} Prop. 2.3.3) shows that $|x+y|=\max(|x|,|y|)$ whenever $|x|\neq |y|$.  Our assumption that $K$ has odd residue characteristic can be characterized by the assumption that $|2|=1$.  

We denote by $\Ocal =\{x\in K \mid |a|\leq1\}$ the ring of integers of $K$, we let $\pi\in\Ocal$ be a uniformizing parameter.  Let $\FF_K=\Ocal/\pi\Ocal$ denote the residue field and let $x\mapsto \overline{x}$ denote the reduction map $\Ocal\to\FF_K$.  It follows from the local compactness of $K$ that a uniformizing parameter exists and that the residue field is finite.  By assumption, $\FF_K$ has odd characteristic.

Given $c\in K$ and an element $r\in|K^\times|$ of the value group of $K$, we define the ``closed'' and ``open'' discs with center $c$ and radius $r$ by
\begin{equation}\label{Discs}
\begin{split}
D_r(c) & = \{x\in K \mid |x-c|\leq r\} \\
D_r^\circ(c) & = \{x\in K \mid |x-c|<r\}.
\end{split}
\end{equation}

We use the non-Archimedean norm $\|\cdot\|$ on $K^2$ defined by $\|(x,y)\|=\max(|x|,|y|)$.  Given a point $(c_1,c_2)\in K^2$ and an element $r\in|K^\times|$ of the value group of $K$, we define the ``closed'' and ``open'' balls with center $(c_1,c_2)$ and radius $r$ by
\begin{equation}\label{Balls}
\begin{split}
B_r(c_1,c_2) & = \{(x,y)\in K^2 \mid \|(x,y)-(c_1,c_2)\|\leq r\} \\
B_r^\circ(c_1,c_2) & = \{(x,y)\in K^2 \mid \|(x,y)-(c_1,c_2)\|< r\}.
\end{split}
\end{equation}
Given two elements $r_1,r_2\in|K^\times|$ of the value group, denote by 
\begin{equation}\label{Polydiscs}
D_{r_1,r_2}(c_1,c_2) = \{(x,y)\in K^2 \mid |x-c_1|\leq r_1, |x-c_2|\leq r_2\}
\end{equation}
the polydisc in $K^2$ with center $(c_2,c_2)$ and radii $r_1,r_2$.  Of course, properties of the non-Archimedean topology on $K$ show that all of the sets defined in $(\ref{Discs})$, $(\ref{Balls})$, and $(\ref{Polydiscs})$ are topologically both open and closed in $K$.


\section{General properties of the H\'enon map}\label{GeneralPopSection}

\subsection{Fixed points and $2$-cycles}  In this section we describe criteria for the existence of fixed points and $2$-cycles.  The proof of the following proposition follows from straightforward calculations arising from the equations $\phi_{a,b}(x,y)=(x,y)$ and $\phi_{a,b}^2(x,y)=(x,y)$.  We let $\Kbar$ denote an algebraic closure of $K$.  

\begin{prop}\label{fp criteria}
Let $(a,b)\in\Hcal$ and let $\phi=\phi_{a,b}:K^2\to K^2$ be the associated H\'enon map.
\begin{itemize}
	\item[{\bf (a)}]  The only fixed points of $\phi$ in $\Kbar^2$ are $(\alpha,\alpha)$, where $\alpha$ is a root of 
$$
x^2-(b-1)x-a=0.
$$
In particular, $\phi$ has no fixed points in $K^2$ when $(b-1)^2+4a$ is not a square in $K$, a single fixed point in $K^2$ when $(b-1)^2+4a=0$, and two distinct fixed points in $K^2$ when $(b-1)^2+4a$ is a nonzero square in $K$.
	\item[{\bf (b)}]  The only solutions in $\Kbar^2$ to the equation $\phi^2(x,y)=(x,y)$ are the fixed points described in part {\bf (a)}, and the points $(\beta_1,\beta_2)$ and $(\beta_2,\beta_1)$, where $\beta_1,\beta_2$ are the roots of
$$
x^2+(b-1)x+(b-1)^2-a=0.
$$  
In particular, $\phi$ has no $2$-cycles in $K^2$ when $4a-3(b-1)^2$ is not a square in $K$, and $\phi$ has one $2$-cycle in $K^2$ when $4a-3(b-1)^2$ is a nonzero square in $K$.  When $4a-3(b-1)^2=0$, $\phi$ has fixed points $(\frac{1}{2}(1-b),\frac{1}{2}(1-b))$ and $(\frac{3}{2}(b-1),\frac{3}{2}(b-1))$ but no $2$-cycles in $K^2$.
\end{itemize}
\end{prop}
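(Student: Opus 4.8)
\emph{Proof proposal.} The plan is to reduce everything to solving quadratic equations over $K$, exploiting that $\ch K\neq2$ (equivalently $|2|=1$) so that the quadratic formula applies: a monic quadratic with coefficients in $K$ has two distinct roots, a single (double) root, or no roots in $K$ according as its discriminant is a nonzero square in $K$, is zero, or is a nonsquare in $K$. Everything then comes down to writing out the right polynomials and keeping careful track of the degenerate cases.

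For part \textbf{(a)}, I would write $\phi(x,y)=(x,y)$ in coordinates: the second coordinate forces $x=y$, and substituting $y=x$ into the first coordinate gives $a+bx-x^2=x$, i.e. $x^2-(b-1)x-a=0$. Conversely each root $\alpha$ of this quadratic yields a fixed point $(\alpha,\alpha)$, so the fixed locus in $\Kbar^2$ is exactly the diagonal points described. Since the discriminant of $x^2-(b-1)x-a$ is $(b-1)^2+4a$, the $K$-rational count follows at once from the trichotomy above.

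For part \textbf{(b)}, I would first compute
\[
\phi^2(x,y)=\bigl(\,a+bx-(a+by-x^2)^2,\ a+by-x^2\,\bigr)
\]
and impose $\phi^2(x,y)=(x,y)$. Matching second coordinates forces $a+by-x^2=y$ — equivalently $\phi(x,y)=(y,x)$, or $x^2=a+(b-1)y$ — and feeding $a+by-x^2=y$ back into the first coordinate collapses the square and gives $a+bx-y^2=x$, i.e. $y^2=a+(b-1)x$. Thus the period-dividing-$2$ locus is precisely the intersection of the two conics $x^2=a+(b-1)y$ and $y^2=a+(b-1)x$; subtracting them and factoring yields $(x-y)(x+y+b-1)=0$. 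On the branch $x=y$ the first conic becomes the fixed-point equation of \textbf{(a)}. On the branch $x+y=1-b$, substituting $y=1-b-x$ into either conic produces the same equation $x^2+(b-1)x+(b-1)^2-a=0$ after a short computation; calling its roots $\beta_1,\beta_2$ and using $\beta_1+\beta_2=-(b-1)$, the two off-diagonal solutions are $(\beta_1,\beta_2)$ and $(\beta_2,\beta_1)$, and since $\phi$ swaps coordinates on this locus they form a genuine $2$-cycle exactly when $\beta_1\neq\beta_2$.

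It remains to read off the $K$-rational count. The discriminant of $x^2+(b-1)x+(b-1)^2-a$ is $4a-3(b-1)^2$: if it is a nonsquare there is no off-diagonal solution in $K^2$, hence no $2$-cycle; if it is a nonzero square then $\beta_1\neq\beta_2$, so $(\beta_1,\beta_2),(\beta_2,\beta_1)$ lie off the diagonal (hence are not fixed points, as $\phi$ fixes only diagonal points) and form the unique $2$-cycle. The one case needing care is $4a-3(b-1)^2=0$: then $\beta_1=\beta_2=\tfrac12(1-b)$, the off-diagonal branch degenerates onto the diagonal point $(\tfrac12(1-b),\tfrac12(1-b))$, which is in fact a fixed point, and substituting $a=\tfrac34(b-1)^2$ into $x^2-(b-1)x-a=0$ (whose discriminant is $4(b-1)^2$) and solving gives the two fixed points $(\tfrac12(1-b),\tfrac12(1-b))$ and $(\tfrac32(b-1),\tfrac32(b-1))$, with no $2$-cycle present. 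I do not expect a genuine obstacle here; the only points to watch are the consistent application of the square/nonsquare trichotomy, the verification that both conics yield the same quadratic on the off-diagonal branch, and the bookkeeping in the degenerate case where a would-be $2$-cycle collapses to a fixed point.
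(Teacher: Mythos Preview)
Your proposal is correct and follows exactly the approach the paper indicates: the paper omits the proof entirely, stating only that it ``follows from straightforward calculations arising from the equations $\phi_{a,b}(x,y)=(x,y)$ and $\phi_{a,b}^2(x,y)=(x,y)$,'' and your write-up carries out precisely those calculations. The factorization $(x-y)(x+y+b-1)=0$ obtained by subtracting the two conics, and the subsequent discriminant bookkeeping (including the degenerate case $4a-3(b-1)^2=0$), are all correct.
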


The following proposition gives a selection of sample applications of Proposition~\ref{fp criteria}, showing that periodic points and $2$-cycles can be easily obtained for certain subregions of the parameter space $\Hcal$.

\begin{prop}\label{PeriodicIIandIII} Let $(a,b)\in\Hcal$ and let $\phi=\phi_{a,b}:K^2\to K^2$ be the associated H\'enon map.
\begin{itemize}
	\item[{\bf (a)}] Suppose that $|a|<1$, $|b|\leq1$, and $\overline{b}\neq1$ in $\FF_K$.  Then $\phi$ has two distinct fixed points in $K^2$.  Moreover, if $K$ has residue characteristic not equal to $3$ and if $-3$ is a square in $K$, then $\phi$ has a $2$-cycle in $K^2$.
	\item[{\bf (b)}] Suppose that $(a,b)\in\Hcal_\II^-$ and that $|a|<|b|^2$.  Then $\phi$ has two distinct fixed points in $K^2$.  Moreover, if $K$ has residue characteristic not equal to $3$ and if $-3$ is a square in $K$, then $\phi$ has a $2$-cycle in $K^2$.
	\item[{\bf (c)}]  Suppose that $(a,b)\in\Hcal_\III$ and that $a$ is a square in $K$.  Then $\phi$ has two distinct fixed points in $K^2$ and a $2$-cycle in $K^2$.
\end{itemize}
\end{prop}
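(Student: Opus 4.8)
The plan is to apply Proposition~\ref{fp criteria} directly, using the hypotheses of each part to check when the relevant discriminants are nonzero squares in $K$. For part \textbf{(a)}, the key observation is that $|a|<1$ and $\overline{b}\neq 1$ force $|b-1|=1$, so $|(b-1)^2+4a|=1$ (using $|4a|=|a|<1$ since $|4|=1$), and in particular $(b-1)^2+4a$ is a unit congruent to $\overline{(b-1)^2}=\overline{b-1}^2$ modulo $\pi$; since this reduction is a nonzero square in $\FF_K$ and $\FF_K$ has odd characteristic, Hensel's lemma shows $(b-1)^2+4a$ is a nonzero square in $K$, giving two distinct fixed points via Proposition~\ref{fp criteria}~\textbf{(a)}. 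For the $2$-cycle, I need $4a-3(b-1)^2$ to be a nonzero square; again $|4a|<1$ and $|3(b-1)^2|=1$ (using that the residue characteristic is not $3$, so $|3|=1$) give $|4a-3(b-1)^2|=1$ with reduction $-3\,\overline{b-1}^2$, which is a square in $\FF_K$ precisely because $-3$ is assumed to be a square in $K$ (hence in $\FF_K$); Hensel's lemma then lifts this to a nonzero square in $K$, and Proposition~\ref{fp criteria}~\textbf{(b)} produces the $2$-cycle.

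Part \textbf{(b)} follows the same pattern after rescaling. Since $(a,b)\in\Hcal_\II^-$ with $|a|<|b|^2$, I compute $|(b-1)^2+4a|$: here $|b|>1$ so $|b-1|=|b|$ and $|(b-1)^2|=|b|^2>|4a|$, whence $|(b-1)^2+4a|=|b|^2$. Writing $(b-1)^2+4a = (b-1)^2(1+4a/(b-1)^2)$ with $|4a/(b-1)^2|<1$, the factor $1+4a/(b-1)^2$ is a $1$-unit, hence a square in $K$ (odd residue characteristic, Hensel), so $(b-1)^2+4a$ is a nonzero square in $K$ and we get two distinct fixed points. Similarly $4a-3(b-1)^2 = (b-1)^2(-3+4a/(b-1)^2)$ with $|-3+4a/(b-1)^2|=1$ and reduction $-3$ (residue characteristic $\neq 3$), which is a square in $\FF_K$ by hypothesis; Hensel lifts this, so $4a-3(b-1)^2$ is a nonzero square in $K$ and Proposition~\ref{fp criteria}~\textbf{(b)} gives the $2$-cycle.

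For part \textbf{(c)}, the hypothesis $(a,b)\in\Hcal_\III$ means $|a|>\max(1,|b|^2)$, so $|a|$ strictly dominates all the other terms. Thus $|(b-1)^2+4a|=|a|$ and $(b-1)^2+4a = 4a(1+(b-1)^2/(4a))$ with $|(b-1)^2/(4a)|<1$; the second factor is a $1$-unit and hence a square, so $(b-1)^2+4a$ is a square in $K$ if and only if $4a$ is, i.e.\ if and only if $a$ is (as $4$ is already a square), which holds by assumption. It is nonzero since $|a|>1$, so we obtain two distinct fixed points. Likewise $4a-3(b-1)^2 = 4a(1-3(b-1)^2/(4a))$ with the second factor a $1$-unit, hence a square, so $4a-3(b-1)^2$ is a nonzero square iff $a$ is a square, which holds; Proposition~\ref{fp criteria}~\textbf{(b)} then yields the $2$-cycle.

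The only mild subtlety — and the part I would state carefully — is the systematic use of the fact that over a complete non-Archimedean field of odd residue characteristic, a unit whose reduction is a nonzero square in $\FF_K$ is itself a square in $K$ (in particular every $1$-unit is a square), which is the form of Hensel's lemma applied to $T^2-u$; and keeping track of when the residue-characteristic hypotheses $|2|=1$ and (in parts \textbf{(a)},\textbf{(b)}) $|3|=1$ are invoked so that the valuations of $4a$ and $3(b-1)^2$ behave as claimed. No single step is a genuine obstacle; the work is just a careful bookkeeping of absolute values in each of the three regions.
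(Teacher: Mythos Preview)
Your proof is correct and follows essentially the same approach as the paper: in each case you verify that the discriminants $(b-1)^2+4a$ and $4a-3(b-1)^2$ from Proposition~\ref{fp criteria} are nonzero squares by showing they are small perturbations of known squares (namely $(b-1)^2$, $-3(b-1)^2$, or $4a$). The paper packages this slightly differently---it substitutes $x=(b-1)t$ in parts \textbf{(a)},\textbf{(b)} and $x=t\sqrt{a}$ in part \textbf{(c)} to normalize, then invokes Newton polygons and Krasner's lemma rather than Hensel---but the underlying argument is identical to yours.
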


\begin{proof}
We prove {\bf (a)}.  Substituting $x=(b-1)t$ in the fixed-point and two-cycle equations leads to $t^2-t-a(b-1)^{-2}=0$ and $t^2+t+(1-a(b-1)^{-2})=0$.  The former has roots in $K$ by inspection of its Newton polygon, and under the additional hypotheses, $K$ contains a primitive $3$-rd root of unity, and hence the latter has roots in $K$ by Krasner's lemma.  The proofs of {\bf (b)} and {\bf (c)} are similar, using the substitutions $x=(b-1)t$ and $x=t\sqrt{a}$, respectively.
\end{proof}

In fact, much more than Proposition~\ref{PeriodicIIandIII} part {\bf (c)} is true.  When $(a,b)$ is in $\Hcal_\III$ and $a$ is a square in $K$, it follows from the results of $\S$~\ref{RegionIII} that $\phi_{a,b}$ has $2^\ell$ distinct points of period $\ell$ in $K^2$ for each $\ell\geq1$, that all possible minimal periods occur in $K^2$, and that all $\phi_{a,b}$-periodic points in $\Kbar^2$ occur in $K^2$.  On the other hand, the proof of Proposition~\ref{PeriodicIIandIII} part {\bf (c)} is fairly elementary and does not rely on the machinery of $\S$~\ref{RegionIII}.

We also remark that the following converse of Proposition~\ref{PeriodicIIandIII} part {\bf (c)} is true: if $(a,b)$ is in $\Hcal_\III$ and $a$ is not a square in $K$, then $\phi_{a,b}$ has no periodic points at all in $K^2$.  The non-existence of fixed points and $2$-cycles could be obtained easily from Proposition~\ref{fp criteria}, but the non-existence of any periodic points whatsoever follows from Theorem~\ref{JuliaSetBoundedNonempty} part {\bf (d)}, as filled Julia sets contain all periodic points.  


\subsection{An involution of the parameter space}  In this section we record a proposition summarizing the basic properties of the involution $\iota:\Hcal\to\Hcal$ of the parameter space described in the introduction.  We omit the proof which consists of routine calculations.

\begin{prop}\label{conj winv}
The function $\iota:\Hcal\to\Hcal$ defined by $\iota(a,b)=(\frac{a}{b^2},\frac{1}{b})$ is an involution, it restricts to involutions $\iota:\Hcal_\I\to\Hcal_\I$ and $\iota:\Hcal_{\III}\to\Hcal_{\III}$, and it restricts to bijections $\iota:\Hcal_{\II}^+\to\Hcal_{\II}^-$ and $\iota:\Hcal_{\II}^-\to\Hcal_{\II}^+$.  Given $(a,b)\in\Hcal$, the maps $\phi_{\iota(a,b)}$ and $\phi_{a,b}^{-1}$ are linearly conjugate to one another by the automorphism $\lambda:K^2\to K^2$ defined by $\lambda(x,y)=(-by,-bx)$; more precisely, $\phi_{\iota(a,b)}=\lambda^{-1}\circ \phi_{a,b}^{-1}\circ\lambda$.  
\end{prop}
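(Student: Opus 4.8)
The plan is to verify the three assertions --- that $\iota$ is an involution, that it is compatible with the partition $(\ref{Regions})$, and that it implements the stated conjugacy --- by direct computation, as no input beyond multiplicativity of $|\cdot|$ is required. First I would check $\iota\circ\iota = \id$: applying $\iota$ twice to $(a,b)$ gives $\iota(a/b^2,1/b) = \bigl((a/b^2)/(1/b)^2,\,1/(1/b)\bigr) = (a,b)$. In particular $\iota$ is a bijection of $\Hcal$, so for the statements about the four regions it suffices to prove the inclusions $\iota(\Hcal_\I)\subseteq\Hcal_\I$, $\iota(\Hcal_\III)\subseteq\Hcal_\III$, $\iota(\Hcal_\II^+)\subseteq\Hcal_\II^-$, and $\iota(\Hcal_\II^-)\subseteq\Hcal_\II^+$; the reverse inclusions follow from $\iota^2 = \id$, and together these yield both the claimed involutions on $\Hcal_\I$ and $\Hcal_\III$ and the claimed mutually inverse bijections between $\Hcal_\II^+$ and $\Hcal_\II^-$.

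For the region inclusions, write $\iota(a,b) = (a',b')$, so that $|a'| = |a|/|b|^2$ and $|b'| = 1/|b|$, and substitute the defining inequalities of each region from $(\ref{Regions})$. For example, if $(a,b)\in\Hcal_\II^-$, so $|a|\le|b|^2$ and $|b|>1$, then $|b'| = 1/|b|<1$ and $|a'| = |a|/|b|^2\le 1$, which is exactly the condition placing $(a',b')$ in $\Hcal_\II^+$; the other three cases are handled identically, the only step needing a moment's care being $|a|>\max(1,|b|^2)\Rightarrow |a|/|b|^2>\max(1,1/|b|^2) = \max(1,|b'|^2)$ for $\Hcal_\III$. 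Alternatively one can argue from the characterization $(\ref{RegionsAlt})$ together with $\|\iota(\iota(a,b))\| = \|(a,b)\|$, which makes the relevant symmetry transparent.

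Finally, for the conjugacy identity I would first invert $\lambda$: solving $(u,v) = (-by,-bx)$ gives $\lambda^{-1}(u,v) = (-v/b,-u/b)$, and $\lambda$ is an invertible $K$-linear map since $b\neq 0$. Then I would evaluate $\lambda^{-1}\circ\phi_{a,b}^{-1}\circ\lambda$ at a general point $(x,y)$: applying $\lambda$ gives $(-by,-bx)$; applying $\phi_{a,b}^{-1}$ from $(\ref{HenonInverse})$ gives $\bigl(-bx,\,-\tfrac{a}{b}-y+bx^2\bigr)$; and applying $\lambda^{-1}$ gives $\bigl(\tfrac{a}{b^2}+\tfrac{y}{b}-x^2,\,x\bigr)$, which is precisely $\phi_{a/b^2,1/b}(x,y) = \phi_{\iota(a,b)}(x,y)$. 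This establishes $\phi_{\iota(a,b)} = \lambda^{-1}\circ\phi_{a,b}^{-1}\circ\lambda$. I do not expect any genuine obstacle here --- this is exactly the kind of routine verification the text omits --- so the ``hard part'' is purely organizational: keeping the four systems of inequalities straight, particularly the asymmetric one defining $\Hcal_\II^-$, and tracking signs carefully through the short affine composition.
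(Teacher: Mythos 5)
Your verification is correct and complete, and it is exactly the kind of routine computation the paper explicitly declines to spell out (``We omit the proof which consists of routine calculations''). The three pieces --- $\iota^2 = \id$, the four region inclusions via $|a'| = |a|/|b|^2$ and $|b'| = 1/|b|$, and the affine composition $\lambda^{-1}\circ\phi_{a,b}^{-1}\circ\lambda = \phi_{\iota(a,b)}$ --- all check out, and the observation that $\iota^2 = \id$ reduces the region statements to one-sided inclusions is the right way to organize it.
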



\subsection{A filtration satisfied by the H\'enon map}\label{FiltrationSect}  For each $(a,b)\in\Hcal$, set
\begin{equation}\label{RDef}
R=R_{a,b}=\max(1,|a|^{1/2},|b|)
\end{equation}
and divide the plane $K^2$ into the three subsets
\begin{equation}\label{FiltrationSetsDef}
\begin{split}
S_{R} & = \{(x,y)\in K^2 \mid \|(x,y)\|\leq R \} \\
S_{R}^+ & =\{(x,y)\in K^2 \mid \|(x,y)\|> R, |x|\geq|y|\} \\
S_{R}^- & =\{(x,y)\in K^2 \mid \|(x,y)\|> R, |x|\leq|y|\}.
\end{split}
\end{equation}
Clearly $K^2= S_{R}\cup S_{R}^+\cup S_{R}^-$, but this is not technically a partition: $S_{R}$ intersects neither $S_{R}^+$ nor $S_{R}^-$, but $S_{R}^+$ intersects $S_{R}^-$ where $|x|=|y|>R$. 

\begin{figure}[hbt]
\begin{center}
\includegraphics[width=80mm]{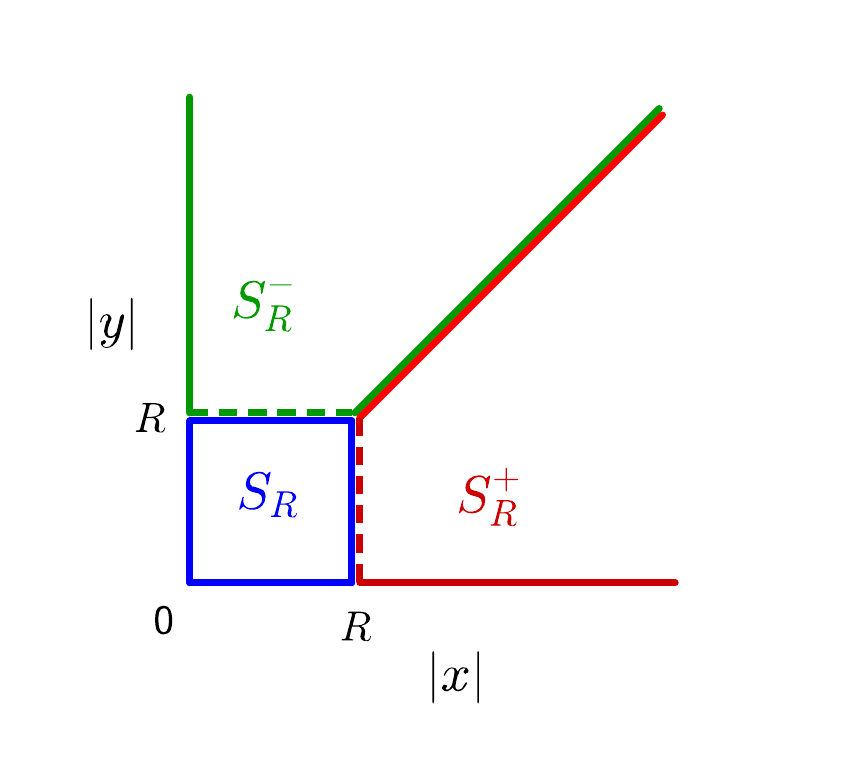}
\caption{The filtration sets $S_{R}$, $S_{R}^+$, and $S_{R}^-$ depicted in the $(|x|,|y|)$-plane.}
\label{FiltrationImage}
\end{center}
\end{figure} 

Following the trajectories of $\phi_{a,b}$-orbits though the sets $(\ref{FiltrationSetsDef})$ leads to the following useful facts: every point in $K^2$ either has a bounded forward orbit eventually contained in $S_R$, or else its forward orbit eventually gets filtered through $S_{R}^+$ to infinity.  Similarly, every point in $K^2$ either has a bounded backward orbit eventually contained in $S_R$, or else its backward orbit eventually gets filtered through $S_{R}^-$ to infinity.  We describe this filtration in more detail in Proposition~\ref{Filtration}, which has been adapted to our purposes from Bedford-Smillie \cite{MR1653043}, who give a treatment of such a filtration for more general plane polynomial automorphisms over $\CC$.  A similar filtration is used by Devaney-Nitecki \cite{MR539548} in their work on the real H\'enon map.

Before we state Proposition~\ref{Filtration}, we require the following lemma, which shows how the parameter $R$ and the sets $S_R$ and $S_R^\pm$ interact with the involution $\iota:\Hcal\to\Hcal$ described in Proposition~\ref{conj winv}.  The proof consists of elementary calculations which we omit.

\begin{lem}\label{FiltrationandInvolution} 
Let $(a,b)\in\Hcal$, and define $R$, $S_R$, $S_R^+$, and $S_R^-$ as in $(\ref{RDef})$ and $(\ref{FiltrationSetsDef})$.  Let $\iota(a,b)=(\frac{a}{b^2},\frac{1}{b})=(a^*,b^*)$, and similarly set $R^*=\max(1,|a^*|^{1/2},|b^*|)$ and define $S_{R^*}$, $S_{R^*}^+$, and $S_{R^*}^-$ as in $(\ref{FiltrationSetsDef})$.  Define $\lambda:K^2\to K^2$ by $\lambda(x,y)=(-by,-bx)$.  The following identities hold.
\begin{equation*}
\begin{split}
 R^* & = \textstyle\frac{1}{|b|}R \\
S_{R^*} & = \lambda^{-1}(S_R) \\
S_{R^*}^+ & = \lambda^{-1}(S_R^-) \\
S_{R^*}^- & = \lambda^{-1}(S_R^+)
\end{split}
\end{equation*}
\end{lem}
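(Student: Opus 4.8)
The plan is to verify the four identities in Lemma~\ref{FiltrationandInvolution} by direct substitution, exploiting the fact that $\lambda$ is a linear bijection with a simple effect on norms. First I would record the elementary norm computation: for $(u,v)\in K^2$, writing $(x,y)=\lambda(u,v)=(-bv,-bu)$, we have $|x|=|b||v|$ and $|y|=|b||u|$, hence $\|\lambda(u,v)\|=|b|\max(|u|,|v|)=|b|\,\|(u,v)\|$. Equivalently, since $\lambda^{-1}(x,y)=(-\tfrac{1}{b}y,-\tfrac{1}{b}x)$, we get $\|\lambda^{-1}(x,y)\|=\tfrac{1}{|b|}\|(x,y)\|$. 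Note also that $\lambda$ and $\lambda^{-1}$ swap the two coordinate absolute values, i.e.\ if $(x,y)=\lambda(u,v)$ then $|x|=|b||v|$ corresponds to the second slot of $(u,v)$ and $|y|=|b||u|$ to the first, so the comparison $|x|\geq|y|$ becomes $|v|\geq|u|$.

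Next I would dispatch the identity $R^*=\tfrac{1}{|b|}R$. Since $a^*=a/b^2$ and $b^*=1/b$, we have $|a^*|^{1/2}=|a|^{1/2}/|b|$ and $|b^*|=1/|b|$, so
\begin{equation*}
R^*=\max\bigl(1,\tfrac{|a|^{1/2}}{|b|},\tfrac{1}{|b|}\bigr)
=\tfrac{1}{|b|}\max\bigl(|b|,|a|^{1/2},1\bigr)
=\tfrac{1}{|b|}R,
\end{equation*}
using that $\max$ commutes with multiplication by the positive scalar $\tfrac{1}{|b|}$. With $R^*=R/|b|$ in hand, the remaining three set identities follow by chasing the defining inequalities through the norm relations above. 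For $S_{R^*}=\lambda^{-1}(S_R)$: a point $(x,y)$ lies in $\lambda^{-1}(S_R)$ iff $\lambda(x,y)\in S_R$ iff $\|\lambda(x,y)\|\leq R$ iff $|b|\,\|(x,y)\|\leq R$ iff $\|(x,y)\|\leq R/|b|=R^*$, which is exactly membership in $S_{R^*}$. For $S_{R^*}^+=\lambda^{-1}(S_R^-)$: membership of $(x,y)$ in $\lambda^{-1}(S_R^-)$ means $\lambda(x,y)\in S_R^-$, i.e.\ $\|\lambda(x,y)\|>R$ and (first coordinate of $\lambda(x,y)$) $\leq$ (second coordinate of $\lambda(x,y)$) in absolute value; the first condition is $\|(x,y)\|>R^*$ as before, and the second condition, since $\lambda(x,y)=(-by,-bx)$, reads $|by|\leq|bx|$, i.e.\ $|y|\leq|x|$, i.e.\ $(x,y)\in S_{R^*}^+$. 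The identity $S_{R^*}^-=\lambda^{-1}(S_R^+)$ is the mirror image, with $|x|\leq|y|$ in place of $|x|\geq|y|$.

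There is no real obstacle here: the only subtlety is keeping track of which coordinate of $\lambda(x,y)$ is which, so that the coordinate-comparison conditions defining $S_R^+$ versus $S_R^-$ get swapped correctly under $\lambda^{-1}$ — this swap is precisely why $S_{R^*}^+$ corresponds to $S_R^-$ rather than to $S_R^+$. I would present the norm scaling lemma once at the start and then treat the four identities in one or two lines each, which matches the paper's remark that ``the proof consists of elementary calculations.''
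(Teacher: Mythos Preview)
Your proposal is correct and is exactly the kind of direct verification the paper has in mind; the paper itself omits the proof entirely, stating only that it ``consists of elementary calculations,'' and your norm-scaling computation together with the coordinate swap under $\lambda$ is precisely that calculation.
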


\begin{prop}\label{Filtration} 
Let $(a,b)\in\Hcal$ and let $\phi=\phi_{a,b}:K^2\to K^2$ be the associated H\'enon map.  Set $R=\max(1,|a|^{1/2},|b|)$, and define $S_R$, $S_R^+$, and $S_R^-$ as in $(\ref{FiltrationSetsDef})$.  Then $\phi$ satisfies the following properties related to forward iteration.
\begin{itemize}
	\item[{\bf (a)}] If $(x,y)\in S_{R}^+$, then $\phi(x,y)\in S_{R}^+$ and $\lim_{n\to +\infty}\norm{\phi^{n}(x,y)}=+\infty$.
	\item[{\bf (b)}] If $(x,y)\in S_{R}$, then $\phi(x,y)\notin S_{R}^-$
	\item[{\bf (c)}] If $(x,y)\in S_{R}^-$, then $\phi^n(x,y)\in S_{R}^-$ for only finitely many $n\geq0$.
\end{itemize}
Moreover, $\phi$ satisfies the following properties related to backward iteration.
\begin{itemize}
	\item[{\bf (d)}] If $(x,y)\in S_{R}^-$, then $\phi^{-1}(x,y)\in S_{R}^-$ and $\lim_{n\to +\infty}\norm{\phi^{-n}(x,y)}=+\infty$
	\item[{\bf (e)}] If $(x,y)\in S_{R}$, then $\phi^{-1}(x,y)\notin S_{R}^+$
	\item[{\bf (f)}] If $(x,y)\in S_{R}^+$, then $\phi^{-n}(x,y)\in S_{R}^+$ for only finitely many $n\geq0$.
\end{itemize}
\end{prop}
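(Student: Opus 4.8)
The plan is to establish the six statements by direct estimation with the non-Archimedean absolute value, exploiting the key algebraic feature of the H\'enon map: if $\phi_{a,b}(x,y) = (x', y')$ then $x' = a + by - x^2$ and $y' = x$, so $|x'| \le \max(|a|, |b||y|, |x|^2)$, with equality to $|x|^2$ whenever $|x|^2$ strictly dominates the other two terms. First I would prove part \textbf{(a)}. Suppose $(x,y) \in S_R^+$, so $\|(x,y)\| > R$ and $|x| \ge |y|$; then $|x| = \|(x,y)\| > R \ge \max(1, |a|^{1/2}, |b|)$. Hence $|x|^2 > |a|$, $|x|^2 > |b| \cdot |x| \ge |b| \cdot |y|$, and $|x|^2 > |x|$, so all three terms $|a|$, $|b||y|$, $|x|^2$ are strictly dominated by $|x|^2$ except the last, forcing $|x'| = |x|^2$ by the ``dominant term wins'' form of the ultrametric inequality. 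Since $y' = x$, we get $|x'| = |x|^2 > |x| = |y'|$, so $|x'| \ge |y'|$ and $\|\phi(x,y)\| = |x|^2 > |x| > R$, giving $\phi(x,y) \in S_R^+$. Iterating, $|x_n| = |x_{n-1}|^2 = \cdots = |x_0|^{2^n}$ with $|x_0| > 1$, so $\|\phi^n(x,y)\| = |x_0|^{2^n} \to +\infty$.

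Next, part \textbf{(b)}: if $(x,y) \in S_R$ then $|x|, |y| \le R$. Suppose toward contradiction that $\phi(x,y) \in S_R^-$, i.e. $\|\phi(x,y)\| > R$ and $|x'| \le |y'| = |x| \le R$; then $\|\phi(x,y)\| = \max(|x'|, |y'|) = |y'| = |x| \le R$, contradicting $\|\phi(x,y)\| > R$. Part \textbf{(c)} then follows by combining (a) and (b): I would argue that an orbit cannot stay in $S_R^-$ indefinitely, because once a point is in $S_R^-$ the ``escape route'' to infinity is through $S_R^+$, not back through $S_R^-$ — more precisely, if $(x,y) \in S_R^-$ with $|y| > |x|$ (the case $|x| = |y|$ also lands in $S_R^+$), then $y' = x$ has $|y'| = |x| \le R < |y|$, and one shows the image moves either into $S_R$ or into $S_R^+$; by (b) a point of $S_R$ never maps into $S_R^-$, and by (a) a point of $S_R^+$ never maps into $S_R^-$, so after leaving $S_R^-$ the orbit can never return. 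The only remaining point is that the orbit does leave $S_R^-$ in finitely many steps, which follows because the $x$-coordinate of any $S_R^-$ point has absolute value $\le R$ and one tracks that $|y|$ is non-increasing along a run inside $S_R^-$ while the point is pushed out; I would make this precise by checking that $\phi(S_R^-) \subseteq S_R \cup S_R^+$, so in fact \emph{every} point of $S_R^-$ leaves after a single step, making (c) immediate. Parts \textbf{(d)}, \textbf{(e)}, \textbf{(f)} are obtained with no extra work by applying \textbf{(a)}, \textbf{(b)}, \textbf{(c)} to $\phi^{-1}$ via the involution: Proposition~\ref{conj winv} gives $\phi_{a,b}^{-1} = \lambda \circ \phi_{\iota(a,b)} \circ \lambda^{-1}$, and Lemma~\ref{FiltrationandInvolution} says $\lambda$ carries $S_{R^*}, S_{R^*}^+, S_{R^*}^-$ to $S_R, S_R^-, S_R^+$ respectively (swapping the $\pm$ roles), so the forward-iteration statements for $\phi_{\iota(a,b)}$ translate directly into the backward-iteration statements for $\phi_{a,b}$.

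The only mild obstacle is the boundary case $|x| = |y| > R$ where $S_R^+$ and $S_R^-$ overlap: there one must check that the image still behaves correctly (it lands in $S_R^+$, since $|x'| = |x|^2 > |x| = |y'|$ strictly), so the overlap does not cause a point to oscillate; this is a two-line ultrametric check. I expect the genuine bookkeeping effort to be in part \textbf{(c)}: pinning down precisely that $\phi(S_R^-)$ avoids $S_R^-$ requires splitting on whether the image has norm $\le R$ or $> R$ and, in the latter case, verifying $|x'| \ge |y'|$ so the image lies in $S_R^+$ rather than $S_R^-$ — this uses $|x'| \le \max(|a|, |b||y|, |x|^2)$ together with $|x| \le R < |y|$ to control $|x'|$ against $|y'| = |x|$. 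Once that inclusion $\phi(S_R^-) \subseteq S_R \cup S_R^+$ is established, statement \textbf{(c)} is essentially free, and the whole proposition reduces to a handful of ultrametric inequalities plus the translation via Lemma~\ref{FiltrationandInvolution}.
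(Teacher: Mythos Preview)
Your treatment of parts \textbf{(a)}, \textbf{(b)}, and the reduction of \textbf{(d)}, \textbf{(e)}, \textbf{(f)} to the forward-iteration statements via Proposition~\ref{conj winv} and Lemma~\ref{FiltrationandInvolution} is correct and matches the paper's argument essentially line for line.

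The gap is in part \textbf{(c)}. You assert that ``the $x$-coordinate of any $S_R^-$ point has absolute value $\le R$'' and, based on this, that $\phi(S_R^-)\subseteq S_R\cup S_R^+$, so that every point leaves $S_R^-$ after a single step. Both claims are false. The definition of $S_R^-$ only requires $|x|\le|y|$ and $|y|>R$; it does not force $|x|\le R$. For a concrete counterexample take $a=0$, $b=1$ (so $R=1$) and $(x,y)=(1/p,\,1/p^2)$ over $\QQ_p$: then $|x|=p>R$, $(x,y)\in S_R^-$, and $\phi(x,y)=(y-x^2,x)=(0,1/p)\in S_R^-$. More generally, when $|b||y|=|x|^2$ there can be cancellation in $x'=a+by-x^2$, and the image can remain in $S_R^-$ for arbitrarily many steps.

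The paper's proof of \textbf{(c)} avoids this by not attempting a one-step exit. Instead it observes that if the forward orbit never enters $S_R^+$ or $S_R$, then every iterate satisfies the \emph{strict} inequality $|y_n|>|x_n|$, and hence $\|(x_{n+1},y_{n+1})\|=|y_{n+1}|=|x_n|<|y_n|=\|(x_n,y_n)\|$. This gives a strictly decreasing sequence of norms, which (since $K$ is discretely valued) must eventually drop to $\le R$, a contradiction. Your sentence ``one tracks that $|y|$ is non-increasing along a run inside $S_R^-$'' is pointing in the right direction; the fix is to make that the actual argument rather than a fallback, and to invoke discreteness of the valuation to terminate the descent.
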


\begin{proof}
In this proof we use the shorthand notation $(x_n,y_n)=\phi^n(x,y)$ for the $n$-th iterate of the point $(x,y)=(x_0,y_0)$ under $\phi$, where $n\in\ZZ$.

To prove {\bf (a)}, suppose that $(x_0,y_0)\in S_R^+$.  Note that $|x_0|>1$, $|x_0|>|b|$, $|x_0|\geq|y_0|$, and $|x_0^2|>|a|$ by assumption, so 
$$
|x_{1}|=|a+by_0-x_0^2|=|x_0^2|>|x_0|=|y_1|.
$$ 
Thus, $(x_1,y_1)\in S_R^+$.  Iterating this argument we see that $|x_{n+1}|=|x_n|^2>|x_n|=|y_{n+1}|$ for all $n\geq1$.  Thus $\|(x_n,y_n)\|\to+\infty$ as $n\to+\infty$.

To prove {\bf (b)}, suppose that $(x_0,y_0)\in S_R$.  If $(x_1,y_1)\in S_R$ then $(x_1,y_1)\notin S_R^-$ and we are done.  If $(x_1,y_1)\notin S_R$ then $\|(x_1,y_1)\|>R$, but as $|y_1|=|x_0|\leq R$, we must have $|x_1|>R$.  Thus $|x_1|>|y_1|$ which implies $(x_1,y_1)\notin S_R^-$.

To prove {\bf (c)}, suppose that $(x_0,y_0)\in S_R^-$ and consider the following exhaustive (but not mutually exclusive) set of cases:
\begin{itemize}
	\item[] Case 1: Some forward iterate $(x_n,y_n)$ is in $S_R^+$
	\item[] Case 2: Some forward iterate $(x_n,y_n)$ is in $S_R$
	\item[] Case 3: All forward iterates $(x_n,y_n)$ satisfy $|y_n|>|x_n|$.
\end{itemize}
In Case 1, it follows from the proof of part {\bf (a)} that $|x_m|>|y_m|$, and hence $(x_m,y_m)\notin S_R^-$, for all $m>n$, and we are done.  In Case 2, {\bf (b)} implies that $(x_{n+1},y_{n+1})$ is either in $S_R$ or $S_R^+$; the latter situation puts us in Case 1, which has been settled,  so we may assume that $(x_{n+1},y_{n+1})\in S_R$.  Iterating, we must have that $(x_{m},y_{m})$ is in $S_R$, and therefore not in $S_R^-$, for all $m\geq n$, and again we are done.  Finally, in Case 3 we have $\|(x_{n+1},y_{n+1})\|=|y_{n+1}|=|x_n|<|y_n|=\|(x_{n},y_{n})\|$ for all $n\geq0$, and thus $\{\|(x_{n},y_{n})\|\}$ is a strictly decreasing sequence as $n\to+\infty$.  Since $K$ is discretely valued, this means that $(x_{n},y_{n})$ is in $S_R$, and therefore not $S_R^-$, for all large enough $n$.

Parts {\bf (d)}, {\bf (e)}, and {\bf (f)} follow by using Lemma~\ref{conj winv} and  Lemma~\ref{FiltrationandInvolution} to reformulate them as statements about $\phi_{\iota(a,b)}$, and applying parts {\bf (a)}, {\bf (b)}, and {\bf (c)} to $\phi_{\iota(a,b)}$.  For example, to prove {\bf (d)} assume that $(x,y)\in S_R^-$.  Adopting the notation of Lemma~\ref{FiltrationandInvolution}, we have $\lambda^{-1}(x,y)\in \lambda^{-1}(S_R^-)=S_{R^*}^+$.  Using {\bf (a)}, we have that $\phi_{\iota(a,b)}(\lambda^{-1}(x,y))\in S_{R^*}^+$ and therefore
$$
\phi_{a,b}^{-1}(x,y) = \lambda\circ\phi_{\iota(a,b)}\circ \lambda^{-1}(x,y)\in \lambda(S_{R^*}^+) = S_R^-,
$$
which is the desired inclusion.  The second assertion of {\bf (d)}, as well as {\bf (e)} and {\bf (f)}, follow analogously.
\end{proof}

According to the filtration described in Proposition~\ref{Filtration}, $\phi_{a,b}(S_R)$ does not intersect $S_R^-$, and therefore either $\phi_{a,b}(S_R)\subseteq S_R$ or else $\phi_{a,b}(S_R)$ intersects  $S_R^+$.   Similarly, $\phi_{a,b}^{-1}(S_R)$ does not intersect $S_R^+$, and therefore either $\phi_{a,b}^{-1}(S_R)\subseteq S_R$ or else $\phi_{a,b}^{-1}(S_R)$ intersects $S_R^-$.   The following proposition establishes precisely when each of these cases occurs.

\begin{prop}\label{Filtration2} 
Let $(a,b)\in\Hcal$ and let $\phi=\phi_{a,b}:K^2\to K^2$ be the associated H\'enon map.  Set $R=\max(1,|a|^{1/2},|b|)$, and define $S_R$, $S_R^+$, and $S_R^-$ as in $(\ref{FiltrationSetsDef})$.  
\begin{itemize}
  	\item[{\bf (a)}] $\phi(S_R)\subseteq S_R$ if and only if $\|(a,b)\|\leq 1$ (or equivalently, $(a,b)\in \Hcal_\I\cup\Hcal_\II^+$).
  	\item[{\bf (b)}] $\phi^{-1}(S_R)\subseteq S_R$ if and only if $\|\iota(a,b)\|\leq 1$ (or equivalently, $(a,b)\in \Hcal_\I\cup\Hcal_\II^-$).
\end{itemize}
\end{prop}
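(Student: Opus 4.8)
The plan is to prove part {\bf (a)} by an explicit argument and then deduce part {\bf (b)} from it using the involution $\iota$, together with Proposition~\ref{conj winv} and Lemma~\ref{FiltrationandInvolution}.

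For {\bf (a)}, one direction is immediate: if $\|(a,b)\|\le1$ then $|a|\le1$ and $|b|\le1$, so $R=\max(1,|a|^{1/2},|b|)=1$ and $S_R=\Ocal^2$; for $(x,y)\in\Ocal^2$ the strong triangle inequality gives $|a+by-x^2|\le\max(|a|,|b||y|,|x|^2)\le1$ and $|x|\le1$, hence $\phi(x,y)\in\Ocal^2=S_R$. For the converse I argue by contrapositive. Assume $\|(a,b)\|>1$; by the partition of $\Hcal$ (equivalently by $(\ref{RegionsAlt})$) this means $(a,b)\in\Hcal_\II^-\cup\Hcal_\III$, and in each case I exhibit a point of $S_R$ whose image escapes $S_R$. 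If $(a,b)\in\Hcal_\III$ then $|a|>\max(1,|b|^2)$ forces $R=|a|^{1/2}$, while $(0,0)\in S_R$ (since $R\ge1$) has image $\phi(0,0)=(a,0)$ of norm $|a|>|a|^{1/2}=R$, so $\phi(S_R)\not\subseteq S_R$. If $(a,b)\in\Hcal_\II^-$ then $|a|\le|b|^2$ and $|b|>1$ force $R=|b|$; here $(0,b)\in S_R$ has image $(a+b^2,0)$, which escapes $S_R$ when $|a|<|b|^2$, and on the remaining locus $|a|=|b|^2$ I use instead $(0,a/b)\in S_R$, whose image $(2a,0)$ has norm $|2a|=|a|=|b|^2>|b|=R$ because $|2|=1$.

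Part {\bf (b)} then follows formally. By Proposition~\ref{conj winv} we have $\phi_{a,b}^{-1}=\lambda\circ\phi_{\iota(a,b)}\circ\lambda^{-1}$ with $\lambda(x,y)=(-by,-bx)$, and by Lemma~\ref{FiltrationandInvolution} we have $\lambda^{-1}(S_R)=S_{R^*}$, where $R^*=R/|b|$ is exactly the filtration radius $R_{\iota(a,b)}$ attached to the H\'enon map $\phi_{\iota(a,b)}$. Applying the bijection $\lambda^{-1}$ of $K^2$ to both sides of an inclusion shows that $\phi_{a,b}^{-1}(S_R)\subseteq S_R$ holds if and only if $\phi_{\iota(a,b)}(S_{R^*})\subseteq S_{R^*}$, which by part {\bf (a)} applied to the parameters $\iota(a,b)$ holds if and only if $\|\iota(a,b)\|\le1$. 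The parenthetical reformulations in both parts are immediate from the definitions of the four regions, or directly from $(\ref{RegionsAlt})$.

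The only genuinely delicate point is the converse of {\bf (a)} in region $\Hcal_\II^-$ along the boundary locus $|a|=|b|^2$: there the naive test point $(0,b)$ can fail, since the top-order terms of $a$ and $b^2$ may cancel in $a+b^2$, and one must switch to the substitution $y=a/b$, whose image $(2a,0)$ escapes precisely because of the standing hypothesis $|2|=1$ (odd residue characteristic). Everything else is routine bookkeeping with the strong triangle inequality and with the already-recorded behavior of $R$, $S_R$, and $S_R^\pm$ under $\iota$.
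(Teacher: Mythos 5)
Your proof is correct, and the overall architecture (forward direction by the strong triangle inequality, converse by exhibiting an escaping test point, part (b) via the involution $\iota$ and Lemma~\ref{FiltrationandInvolution}) matches the paper's. The one place you diverge is the case split in the converse of part (a). You partition by region ($\Hcal_\III$ versus $\Hcal_\II^-$), which leaves the boundary locus $|a|=|b|^2$ inside $\Hcal_\II^-$, where your test point $(0,b)$ can fail due to cancellation in $a+b^2$; you then rescue the argument with $(0,a/b)\mapsto(2a,0)$ and the standing hypothesis $|2|=1$. The paper instead splits by the relation $|a|^{1/2}\ge|b|$ versus $|a|^{1/2}<|b|$, using test points $(0,0)$ and $(b,0)$ respectively; this places the boundary $|a|=|b|^2$ into the first case, where $\phi(0,0)=(a,0)$ escapes immediately, and no cancellation ever arises, so the odd residue characteristic is never invoked. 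Your argument works, but it imports a hypothesis this proposition does not actually need. In fact, within your own decomposition you could have avoided $|2|=1$ as well: when $|a|>|b|$ the point $(0,0)$ already escapes (since $|a|>|b|=R$), and that case covers the problematic boundary $|a|=|b|^2$; only when $|a|\le|b|$ do you need $(0,b)$, and there $|a|\le|b|<|b|^2$ rules out cancellation. Part (b) is handled the same way in both proofs.
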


\begin{proof}[Proof of Proposition~\ref{Filtration2}]
{\bf (a)}  If $(a,b)$ is in region $\Hcal_\I$ or $\Hcal_\II^+$, we then have that $|a|\leq1$ and $|b|\leq1$, so $R=1$ and $S_R=B_1(0,0)$.  Since in this case $\phi$ has coefficients in $\Ocal$, we deduce that $\phi(S_R)\subseteq S_R$ by the strong triangle inequality.  Conversely, assume that $(a,b)$ is not in region $\Hcal_\I$ or $\Hcal_\II^+$; this means that either $|a|>1$ or $|b|>1$.  We split into two cases. If $\abs{a}^{1/2}\geq \abs{b}$, then $\abs{a}>1$ and $R=|a|^{1/2}$, so $\abs{a}>R$.  It follows that $\phi(0,0)=(a,0)\in S_R^+$, and therefore $\phi(S_R)\not\subseteq S_R$. If $\abs{a}^{1/2}< \abs{b}$, then $\abs{b}>1$ and $R=|b|$.  It follows that $(b,0)\in S_R$ and $|a-b^2|=\abs{b^2}>|b|=R$, so $\phi(b,0)=(a-b^2,b)\in S_R^+$.  Thus $\phi(S_R)\not\subseteq S_R$ in this case as well.

{\bf (b)}  We adopt the notation of Lemma~\ref{FiltrationandInvolution}.  It follows from Proposition~\ref{conj winv} and  Lemma~\ref{FiltrationandInvolution} that $\lambda^{-1}(S_R)=S_{R^*}$ and $\lambda^{-1}(\phi_{a,b}^{-1}(S_R))=\phi_{a^*,b^*}(S_{R^*})$, and consequently $\phi_{a,b}^{-1}(S_R)\subseteq S_R$ if and only if $\phi_{a^*,b^*}(S_{R^*})\subseteq S_{R^*}$.   By Proposition~\ref{conj winv}, $(a,b)$ is in region $\Hcal_\I$ or $\Hcal_\II^-$ if and only if $(a^*,b^*)$ is in region $\Hcal_\I$ or $\Hcal_\II^+$.  Together these facts imply that {\bf (b)} follows from {\bf (a)} applied to $\phi_{a^*,b^*}$.
\end{proof}

\begin{prop}\label{J+,J-}
Let $(a,b)\in\Hcal$ and let $\phi=\phi_{a,b}:K^2\to K^2$ be the associated H\'enon map.  Set $R=\max(1,|a|^{1/2},|b|)$, and define $S_R$, $S_R^+$, and $S_R^-$ as in $(\ref{FiltrationSetsDef})$.  Then the following holds.
\begin{equation*}
\begin{split}
J^+(\phi) & = K^2\setminus \bigg(\bigcup_{n\geq0}\phi^{-n}(S_R^+)\bigg) \subseteq S_R^-\cup S_R \\
J^-(\phi) & =K^2\setminus \bigg(\bigcup_{n\geq0}\phi^{n}(S_R^-)\bigg)\subseteq S_R^+\cup S_R \\
J(\phi) & \subseteq S_R.
\end{split}
\end{equation*}
In particular, $J(\phi)$ is bounded.
\end{prop}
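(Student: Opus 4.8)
The plan is to prove the three displayed identities/inclusions directly from the filtration properties in Proposition~\ref{Filtration}, and then to deduce boundedness of $J(\phi)$ from the third inclusion together with the definition of $R$. I will work out $J^+(\phi)$ in detail; the statement for $J^-(\phi)$ follows by the involution symmetry already exploited in the proof of Proposition~\ref{Filtration}, and the statement for $J(\phi)$ is an immediate consequence.

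First I would establish the set identity $J^+(\phi) = K^2\setminus\bigcup_{n\geq0}\phi^{-n}(S_R^+)$. For the inclusion ``$\subseteq$'', suppose $(x,y)\in\phi^{-n}(S_R^+)$ for some $n\geq0$, so $\phi^n(x,y)\in S_R^+$; by Proposition~\ref{Filtration}~{\bf (a)} the forward orbit of $\phi^n(x,y)$ — hence of $(x,y)$ — has norm tending to $+\infty$, so $(x,y)\notin J^+(\phi)$. For ``$\supseteq$'', suppose $(x,y)\notin J^+(\phi)$, i.e.\ $\|\phi^n(x,y)\|$ is unbounded as $n\to+\infty$. I claim some forward iterate lies in $S_R^+$. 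If not, then every $\phi^n(x,y)$ lies in $S_R\cup S_R^-$. Track the orbit through the cases of Proposition~\ref{Filtration}: by {\bf (c)}, only finitely many iterates lie in $S_R^-$, so from some point on every iterate lies in $S_R$ (using {\bf (b)}: once in $S_R$, the next iterate is not in $S_R^-$, and since we've excluded $S_R^+$ it must be in $S_R$). But then the forward orbit is eventually contained in the bounded set $S_R$, contradicting unboundedness. Hence some $\phi^n(x,y)\in S_R^+$, i.e.\ $(x,y)\in\phi^{-n}(S_R^+)$. Combined with Proposition~\ref{Filtration}~{\bf (a)} (which shows $S_R^+$ is forward-invariant, so the union $\bigcup_{n\geq0}\phi^{-n}(S_R^+)$ consists exactly of points with some forward iterate in $S_R^+$), this gives the identity.

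Next I would verify the inclusion $J^+(\phi)\subseteq S_R^-\cup S_R$. Since $K^2=S_R\cup S_R^+\cup S_R^-$, it suffices to show $J^+(\phi)\cap S_R^+=\emptyset$; but $S_R^+\subseteq\bigcup_{n\geq0}\phi^{-n}(S_R^+)$ (the $n=0$ term), which is disjoint from $J^+(\phi)$ by the identity just proved. The analogous statements for $J^-(\phi)$, namely $J^-(\phi)=K^2\setminus\bigcup_{n\geq0}\phi^n(S_R^-)\subseteq S_R^+\cup S_R$, follow from parts {\bf (d)}, {\bf (e)}, {\bf (f)} of Proposition~\ref{Filtration} by the identical argument with the time direction reversed (or formally, by transporting via $\lambda$ and $\iota$ exactly as in the proof of Proposition~\ref{Filtration}, since Lemma~\ref{FiltrationandInvolution} intertwines $S_R^+$ and $S_R^-$). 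Then $J(\phi)=J^+(\phi)\cap J^-(\phi)\subseteq(S_R^-\cup S_R)\cap(S_R^+\cup S_R)=S_R$, because $S_R^-\cap S_R^+\subseteq S_R$ is false — rather, one notes $S_R^-\cap S_R^+$ consists of points with $\|(x,y)\|>R$ and $|x|=|y|$, which lies in neither $S_R$; so more carefully, $(S_R^-\cup S_R)\cap(S_R^+\cup S_R)=S_R\cup(S_R^-\cap S_R^+)$, and I must separately rule out $J(\phi)$ meeting $S_R^-\cap S_R^+$. But a point in $S_R^+$ is excluded from $J^+(\phi)$ as shown, so $J(\phi)\cap S_R^+=\emptyset$, giving $J(\phi)\subseteq S_R$. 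Finally, $J(\phi)\subseteq S_R=B_R(0,0)$ is bounded by definition of the norm, and since $R=\max(1,|a|^{1/2},|b|)<\infty$, this completes the proof.

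The only genuinely delicate point is the case analysis in the ``$\supseteq$'' direction for $J^+$: one must argue that an orbit which never enters $S_R^+$ and is unbounded leads to a contradiction, which requires combining {\bf (b)} and {\bf (c)} to show the orbit is eventually trapped in $S_R$. This is exactly the content foreshadowed by the informal discussion preceding Proposition~\ref{Filtration} (``every point either has a bounded forward orbit eventually contained in $S_R$, or else its forward orbit eventually gets filtered through $S_R^+$ to infinity''), so the proof really just amounts to making that dichotomy precise.
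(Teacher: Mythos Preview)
Your proof is correct and follows essentially the same approach as the paper's: both directions of the set identity for $J^+(\phi)$ rest on parts {\bf (a)} and {\bf (c)} of Proposition~\ref{Filtration} exactly as you use them (the paper phrases the two inclusions directly rather than contrapositively, and does not invoke {\bf (b)} in the converse direction, but this is cosmetic). For $J(\phi)\subseteq S_R$ the paper argues slightly more cleanly---rather than intersecting $(S_R^-\cup S_R)\cap(S_R^+\cup S_R)$ and then separately excluding $S_R^-\cap S_R^+$, it simply observes that a point with bounded two-sided orbit lies neither in $S_R^+$ nor in $S_R^-$---but your corrected argument reaches the same conclusion.
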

\begin{proof}
Proposition~\ref{Filtration} {\bf (a)} shows that no point in $S_R^+$ can have bounded forward orbit, from which $J^+(\phi) \subseteq S_R^-\cup S_R$ follows.  More generally, if $(x,y)\in J^+(\phi)$, then each point $\phi^n(x,y)$ in its forward orbit also has bounded forward orbit, from which we deduce $\phi^n(x,y)\not\in S_R^+$; thus $(x,y)\not\in\bigcup_{n\geq0}\phi^{-n}(S_R^+)$.   Conversely, if $(x,y)\not\in\bigcup_{n\geq0}\phi^{-n}(S_R^+)$, then the points $\phi^n(x,y)$ (for $n\geq0$) are not in $S_R^+$, and can only be in $S_R^-$ for finitely many $n$ by Proposition~\ref{Filtration} {\bf (c)}.  Hence $\phi^n(x,y)\in S_R$ for all but finitely many $n\geq0$, and we deduce that $(x,y)$ has bounded forward orbit, and so it is in $J^+(\phi)$.

The corresponding facts about $J^-(\phi)$ follow from a similar argument.  Finally, since points with bounded two-sided orbit can occur neither in $S_R^+$ nor in $S_R^-$, they can only occur in $S_R$, from which $J(\phi) \subseteq S_R$ follows.
\end{proof}


\subsection{General properties of the filled Julia sets}  In this section we use the filtration described in $\S$~\ref{FiltrationSect} to deduce some basic facts about the filled Julia sets defined in $(\ref{FilledJuliaSetsDef})$.  We begin with a lemma which explains how the filled Julia sets interact with the involution $\iota:\Hcal\to\Hcal$ described in Proposition~\ref{conj winv}.  

\begin{lem}\label{FilledJuliaAndInvolution}
Let $(a,b)\in\Hcal$ and define $\lambda:K^2\to K^2$ by $\lambda(x,y)=(-by,-bx)$.  Then
\begin{equation*}
\begin{split}
J(\phi_{\iota(a,b)}) & =\lambda^{-1}(J(\phi_{a,b})) \\
J^+(\phi_{\iota(a,b)}) & =\lambda^{-1}(J^-(\phi_{a,b})) \\
J^-(\phi_{\iota(a,b)}) & =\lambda^{-1}(J^+(\phi_{a,b})).
\end{split}
\end{equation*}
\end{lem}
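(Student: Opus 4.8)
The plan is to derive all three identities from the single conjugacy relation $\phi_{\iota(a,b)} = \lambda^{-1}\circ\phi_{a,b}^{-1}\circ\lambda$ established in Proposition~\ref{conj winv}, together with the observation that $\lambda$ is a linear automorphism of $K^2$ which preserves the norm, i.e.\ $\|\lambda(x,y)\| = |b|\cdot\|(x,y)\|$ up to the constant factor $|b|$, and in particular $\|\lambda(\x_n)\|$ is bounded in $n$ if and only if $\|\x_n\|$ is. Since $\lambda$ is a bijection with $\lambda^{-1}(x,y) = (-\frac{1}{b}y,-\frac{1}{b}x)$, it makes sense to track orbits through it.

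First I would record the key iteration identity: from $\phi_{\iota(a,b)} = \lambda^{-1}\circ\phi_{a,b}^{-1}\circ\lambda$ one gets, by an immediate induction on $n$, that $\phi_{\iota(a,b)}^{\,n} = \lambda^{-1}\circ\phi_{a,b}^{-n}\circ\lambda$ for every $n\in\ZZ$. Then for a point $(x,y)\in K^2$, the forward orbit $\{\phi_{\iota(a,b)}^{\,n}(x,y)\mid n\geq 0\}$ is $\{\lambda^{-1}(\phi_{a,b}^{-n}(\lambda(x,y)))\mid n\geq 0\}$, which is bounded if and only if $\{\phi_{a,b}^{-n}(\lambda(x,y))\mid n\geq 0\}$ is bounded (using that $\lambda^{-1}$ distorts the norm only by the fixed factor $1/|b|$). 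By definition~(\ref{FilledJuliaSetsDef}) this says exactly that $(x,y)\in J^+(\phi_{\iota(a,b)})$ if and only if $\lambda(x,y)\in J^-(\phi_{a,b})$, i.e.\ $J^+(\phi_{\iota(a,b)}) = \lambda^{-1}(J^-(\phi_{a,b}))$. The identity $J^-(\phi_{\iota(a,b)}) = \lambda^{-1}(J^+(\phi_{a,b}))$ follows in the same way using $n\to-\infty$, or alternatively by applying the first identity with $(a,b)$ replaced by $\iota(a,b)$ and using that $\iota$ is an involution (Proposition~\ref{conj winv}) and that the corresponding $\lambda$ for $\iota(a,b)$ is $\lambda^{-1}$ up to scalar. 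Finally, intersecting the two gives $J(\phi_{\iota(a,b)}) = J^+(\phi_{\iota(a,b)})\cap J^-(\phi_{\iota(a,b)}) = \lambda^{-1}(J^-(\phi_{a,b}))\cap\lambda^{-1}(J^+(\phi_{a,b})) = \lambda^{-1}(J^-(\phi_{a,b})\cap J^+(\phi_{a,b})) = \lambda^{-1}(J(\phi_{a,b}))$, since $\lambda^{-1}$ is a bijection and hence commutes with intersections.

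There is essentially no serious obstacle here: the content is entirely bookkeeping with the conjugacy relation and the elementary fact that a linear automorphism sends bounded sets to bounded sets and vice versa. The one point to state carefully is that $\lambda$ scales the norm by exactly $|b|$, so that ``$\|\phi^n_{a,b}(x,y)\|$ bounded'' is genuinely equivalent after applying $\lambda^{\pm 1}$; this is where the hypothesis $b\neq 0$ is used, and it is the only spot where anything could go wrong if one were careless. I would write the argument for $J^+(\phi_{\iota(a,b)}) = \lambda^{-1}(J^-(\phi_{a,b}))$ in full and then remark that the other two follow by the same reasoning together with the involutive property of $\iota$.
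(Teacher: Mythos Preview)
Your proposal is correct and follows essentially the same approach as the paper: iterate the conjugacy from Proposition~\ref{conj winv} to get $\phi_{\iota(a,b)}^{\,n} = \lambda^{-1}\circ\phi_{a,b}^{-n}\circ\lambda$, use $\|\lambda(x,y)\|=|b|\,\|(x,y)\|$ to transfer boundedness, and read off the three identities. The paper presents exactly this argument, just slightly more tersely.
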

\begin{proof}
For each $n\in\ZZ$, it follows from Proposition~\ref{conj winv} that $\lambda\circ\phi_{\iota(a,b)}^n=\phi_{a,b}^{-n}\circ \lambda$.  Together with the easily checked identity $\|\lambda(x,y)\|=|b|\|(x,y)\|$, we deduce that 
\begin{equation}
|b|\|\phi_{\iota(a,b)}^n(x,y)\|=\|\phi_{a,b}^{-n}(\lambda(x,y))\|
\end{equation}
for each $(x,y)\in K^2$.  It follows that $(x,y)$ has bounded forward (resp. backward) $\phi_{\iota(a,b)}$-orbit if and only if $\lambda(x,y)$ has bounded backward (resp. forward) $\phi_{a,b}$-orbit, from which the desired identities follow.
\end{proof}

Our next result shows that, when $(a,b)\in\Hcal_\I$, the filled Julia sets take a particularly simple form; and in fact these conditions characterize the region $\Hcal_\I$.

\begin{thm}\label{GoodReductionCrit}
Let $(a,b)\in\Hcal$ and let $\phi=\phi_{a,b}:K^2\to K^2$ be the associated H\'enon map.  The following conditions are equivalent:
\begin{enumerate}
\item[{\bf (a)}] $(a,b)\in\Hcal_\I$
\item[{\bf (b)}] $J(\phi)=B_1(0,0)$
\item[{\bf (c)}] $J^+(\phi)=B_1(0,0)$
\item[{\bf (d)}] $J^-(\phi)=B_1(0,0)$
\item[{\bf (e)}] $\phi(B_1(0,0))=B_1(0,0)$
\end{enumerate}
\end{thm}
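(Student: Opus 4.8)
The plan is to establish the equivalence by a cycle of implications, exploiting the symmetry built into the problem via the involution $\iota$. First I would prove $\textbf{(a)}\Rightarrow\textbf{(b)}$. When $(a,b)\in\Hcal_\I$ we have $|a|\leq1$ and $|b|=1$, so $R=1$ and $S_R=B_1(0,0)$. By Proposition~\ref{Filtration2} part \textbf{(a)}, $\phi(B_1(0,0))\subseteq B_1(0,0)$. Since $(a,b)\in\Hcal_\I$ also means $\iota(a,b)\in\Hcal_\I$ (by Proposition~\ref{conj winv}), Proposition~\ref{Filtration2} part \textbf{(b)} gives $\phi^{-1}(B_1(0,0))\subseteq B_1(0,0)$ as well. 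Combining these, $\phi(B_1(0,0))=B_1(0,0)$, so every point of $B_1(0,0)$ has its entire two-sided orbit contained in the bounded set $B_1(0,0)$, hence $B_1(0,0)\subseteq J(\phi)$. The reverse inclusion $J(\phi)\subseteq S_R=B_1(0,0)$ is exactly Proposition~\ref{J+,J-}. This simultaneously proves $\textbf{(a)}\Rightarrow\textbf{(e)}$, since we have just shown $\phi(B_1(0,0))=B_1(0,0)$; and it nearly proves $\textbf{(a)}\Rightarrow\textbf{(c)}$ and $\textbf{(a)}\Rightarrow\textbf{(d)}$, since the forward-invariance $\phi(B_1)\subseteq B_1$ gives $B_1(0,0)\subseteq J^+(\phi)$ while Proposition~\ref{J+,J-} gives $J^+(\phi)\subseteq S_R^-\cup S_R$, and one checks that no point of $S_R^-$ (which requires $\|(x,y)\|>R=1$) lies in a set we have shown forward-invariant; in fact the cleanest route is to note $J^+(\phi)\cap J^-(\phi)=J(\phi)=B_1(0,0)$ and that $J^+(\phi),J^-(\phi)\supseteq B_1(0,0)$, then rule out extra points using the filtration.

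Next I would close the cycle with the reverse implications. For $\textbf{(e)}\Rightarrow\textbf{(a)}$: if $\phi(B_1(0,0))=B_1(0,0)$ then in particular $\phi(B_1(0,0))\subseteq B_1(0,0)$; I claim this forces $R=1$, because if $R>1$ then $B_1(0,0)\subsetneq S_R$ and the argument in the proof of Proposition~\ref{Filtration2}\textbf{(a)} shows $\phi$ pushes a point of $S_R$ out of $S_R$ — but one must check it pushes a point of $B_1(0,0)$ specifically out of $B_1(0,0)$; since $(0,0)\in B_1(0,0)$ and $\phi(0,0)=(a,0)$, surjectivity onto $B_1(0,0)$ combined with $|a|\le 1$ being forced... the careful statement is: $\phi(B_1)\subseteq B_1$ already gives $|a|\le1$ (take the image of $(0,0)$) and $|b|\le1$ (compare the images of $(0,0)$ and $(0,1)$, whose first coordinates are $a$ and $a+b$), so $(a,b)\in\Hcal_\I\cup\Hcal_\II^+$; then using surjectivity, $\phi^{-1}$ must also map $B_1$ into $B_1$, which by Proposition~\ref{Filtration2}\textbf{(b)} puts $(a,b)\in\Hcal_\I\cup\Hcal_\II^-$, and the intersection of these two regions is exactly $\Hcal_\I$. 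For $\textbf{(b)}\Rightarrow\textbf{(e)}$: since $J(\phi)$ is always $\phi$-invariant (noted right after the definition in~(\ref{FilledJuliaSetsDef})), $\phi(J(\phi))=J(\phi)$, so $\textbf{(b)}$ gives $\phi(B_1(0,0))=B_1(0,0)$ immediately.

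For the remaining two conditions I would use the involution to trade forward for backward. The identity $\phi^{-1}$ is linearly conjugate to $\phi_{\iota(a,b)}$ via $\lambda(x,y)=(-by,-bx)$ (Proposition~\ref{conj winv}), and Lemma~\ref{FilledJuliaAndInvolution} gives $J^-(\phi_{a,b})=\lambda^{-1}(J^+(\phi_{\iota(a,b)}))$ — wait, it gives $J^-(\phi_{\iota(a,b)})=\lambda^{-1}(J^+(\phi_{a,b}))$, equivalently $J^+(\phi_{a,b})=\lambda(J^-(\phi_{\iota(a,b)}))$. So $\textbf{(c)}$ for $\phi_{a,b}$ is equivalent to $J^-(\phi_{\iota(a,b)})=\lambda^{-1}(B_1(0,0))=B_1(0,0)$ (using $\|\lambda(x,y)\|=|b|\,\|(x,y)\|$ together with $|b|=1$ in region $\Hcal_\I$; outside $\Hcal_\I$ one argues contrapositively). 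Thus $\textbf{(c)}\Leftrightarrow\textbf{(d)}$ for $\phi_{\iota(a,b)}$, and since $\iota$ preserves $\Hcal_\I$, it suffices to prove $\textbf{(a)}\Leftrightarrow\textbf{(d)}$. The implication $\textbf{(a)}\Rightarrow\textbf{(d)}$ is handled symmetrically to $\textbf{(a)}\Rightarrow\textbf{(c)}$ using parts \textbf{(d)}--\textbf{(f)} of Proposition~\ref{Filtration}; and $\textbf{(d)}\Rightarrow\textbf{(a)}$: if $J^-(\phi_{a,b})=B_1(0,0)$ then by Lemma~\ref{FilledJuliaAndInvolution} $J^+(\phi_{\iota(a,b)})=\lambda^{-1}(B_1(0,0))$, which is a ball of radius $1/|b^*|$ about the origin where $b^*=1/b$; for this to be bounded — it is, being a ball — forces, via Proposition~\ref{J+,J-} applied to $\phi_{\iota(a,b)}$ (whose $J^+$ lies in $S_{R^*}^-\cup S_{R^*}$, a set that is unbounded in the $x$-direction only if $R^*>$ the ball radius)... the clean statement is that $J^+$ being a ball centered at the origin forces $\iota(a,b)$ out of $\Hcal_\III$ and out of $\Hcal_\II^-$ by the later classification results, but to keep this self-contained I would instead run the $\textbf{(e)}\Rightarrow\textbf{(a)}$ argument on $\phi_{\iota(a,b)}^{-1}$.

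The main obstacle is the book-keeping in the reverse implications: showing that if any one of the Julia sets equals $B_1(0,0)$ exactly (not merely contains or is contained in it) then $R$ must equal $1$ and both $\phi,\phi^{-1}$ preserve the unit ball. The forward inclusions are soft consequences of the filtration, but ruling out, say, $(a,b)\in\Hcal_\II^+$ in condition \textbf{(b)} requires knowing that in region $\Hcal_\II^+$ the set $J(\phi)$ is a proper subset of $B_1(0,0)$ — which is plausible but needs the asymmetry argument sketched above (that $\phi\colon B_1(0,0)\to B_1(0,0)$ fails to be surjective when $|b|<1$, so $B_1(0,0)\not\subseteq J^-(\phi)$, hence $B_1(0,0)\ne J(\phi)$). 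I would isolate that non-surjectivity claim as the technical heart: when $|b|<1$, the point $(1,0)$ (or any point with $|x|=1$) is not in the image $\phi(B_1(0,0))=\{(a+by-x^2,x):|x|\le1,|y|\le1\}$, because the first coordinate $a+by-x^2$ ranges only over a set not containing all of $\Ocal$ once $|b|<1$ pins down its value mod $\pi$ — careful here, $x^2$ also varies — the right statement is that the second coordinate is $x$ with $|x|\le 1$ and then $a+by-x^2$ is determined mod $\pi\Ocal$ up to the $by$ term which lies in $\pi\Ocal$, so the fiber over each admissible $x$ misses residues, making $\phi$ non-surjective onto $B_1(0,0)$; I expect this to require the most care.
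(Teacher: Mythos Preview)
Your treatment of the cycle $\textbf{(a)}\Leftrightarrow\textbf{(b)}\Leftrightarrow\textbf{(e)}$ is correct and matches the paper's argument closely. Your $\textbf{(e)}\Rightarrow\textbf{(a)}$ is a minor variant: you deduce $|a|\leq1$ and $|b|\leq1$ from test points, then invoke Proposition~\ref{Filtration2}\textbf{(b)} to force $(a,b)\in\Hcal_\I\cup\Hcal_\II^-$ and intersect. The paper instead evaluates $\phi^{-1}(a+1,0)=(0,1/b)\in B_1(0,0)$ directly to get $|1/b|\leq1$. Both work.

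The real issue is your handling of $\textbf{(c)}$ and $\textbf{(d)}$. You overlook that $J^+(\phi)$ and $J^-(\phi)$ are themselves $\phi$-invariant (this is noted immediately after~(\ref{FilledJuliaSetsDef})), so the implications $\textbf{(c)}\Rightarrow\textbf{(e)}$ and $\textbf{(d)}\Rightarrow\textbf{(e)}$ are one-liners, \emph{identical} to your own proof of $\textbf{(b)}\Rightarrow\textbf{(e)}$. This is exactly what the paper does. Once you have these, $\textbf{(c)}\Rightarrow\textbf{(a)}$ and $\textbf{(d)}\Rightarrow\textbf{(a)}$ follow via the already-established $\textbf{(e)}\Rightarrow\textbf{(a)}$, and the entire equivalence closes.

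Because you missed this, your proposal drifts into involution bookkeeping that is partly circular (you invoke $|b|=1$ while trying to relate $\textbf{(c)}$ and $\textbf{(d)}$, which presupposes $\textbf{(a)}$) and into a non-surjectivity argument for $\phi|_{B_1(0,0)}$ when $|b|<1$ that you yourself flag as the ``technical heart'' and leave unfinished. None of that is needed. For the forward direction $\textbf{(a)}\Rightarrow\textbf{(c)}$, the paper's argument is the one you call ``cleanest'': from $\textbf{(a)}$ you already know $J(\phi)=B_1(0,0)$ and $\phi(B_1(0,0))=B_1(0,0)$; any hypothetical point of $J^+(\phi)\setminus B_1(0,0)$ lies in $S_R^-$ by Proposition~\ref{J+,J-}, and by Proposition~\ref{Filtration}\textbf{(c)} some forward iterate lands in $S_R=B_1(0,0)$, contradicting the bijectivity of $\phi$ on $B_1(0,0)$. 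The argument for $\textbf{(a)}\Rightarrow\textbf{(d)}$ is symmetric.
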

\begin{proof}
{\bf (a)} $\Rightarrow$ {\bf (b)}:   Suppose that  $(a,b)\in\Hcal_\I$; thus $|a|\leq 1$ and $|b|=1$.  We then have $R=1$ in Corollary~\ref{J+,J-} and hence $J(\phi) \subseteq S_1=B_1(0,0)$.  Conversely, suppose $(x,y)\in B_1(0,0)$.  Then $\norm{\phi(x,y)} = \max\{\abs{a+by-x^2},\abs{x}\}\leq 1$, and iterating we have $\norm{\phi^n(x,y)}\leq 1$ for all $n\geq0$.  Similarly, $\norm{\phi^{-1}(x,y)} = \max\{\abs{y},\abs{-\frac{a}{b}+\frac{1}{b}x+\frac{1}{b}y^2}\} \leq 1$, and iterating we have $\norm{\phi^{-n}(x,y)} \leq 1$ for all $n\geq1$.  We conclude that $(x,y) \in J(\phi)$.

{\bf (b)} $\Rightarrow$ {\bf (e)}:   If $J(\phi)=B_1(0,0)$, then $\phi(B_1(0,0))=B_1(0,0)$ follows from the $\phi$-invariance of $J(\phi)$.

{\bf (e)} $\Rightarrow$ {\bf (a)}:  Suppose that $\phi(B_1(0,0))=B_1(0,0)$. Then because $(a,0) =\phi(0,0) \in B_1(0,0)$, it must be true that $|a| \leq 1$.  Also, $(a+b,0)=\phi(0,1)  \in B_1(0,0)$, so it must be true that $\abs{a+b} \leq 1$.  Therefore $|b|\leq \max(|a+b|,|-a|)\leq1$.  Finally, note that $\phi^{-1}(B_1(0,0))=B_1(0,0)$.  So $(0,\frac{1}{b})=\phi^{-1}(a+1,0)\in B_1(0,0)$, so we must have $|\frac{1}{b}|\leq1$ and thus $|b|=1$.  

{\bf (a)} $\Rightarrow$ {\bf (c)}:   Suppose that  $(a,b)\in\Hcal_\I$.  Then $R=1$ and $S_R=B_1(0,0)$ in Proposition~\ref{J+,J-}.  Since we have already proved that $J(\phi)=B_1(0,0)$ whenever $(a,b)\in\Hcal_\I$, we have $B_1(0,0)\subseteq J^+(\phi)$.  If $J^+(\phi)$ contains some point $(x_0,y_0)$ which is not in $B_1(0,0)$, then Proposition~\ref{J+,J-} implies that $(x_0,y_0)\in S_R^-$.  By Proposition~\ref{Filtration} part {\bf (c)} and Proposition~\ref{J+,J-}, there exists some point $(x_{n},y_{n})$ in the forward orbit of $(x_0,y_0)$ such that $(x_{n},y_{n})\in S_R$.  In particular, $(x_{0},y_{0})\notin B_1(0,0)$ and $(x_{n},y_{n})\in B_1(0,0)$.  This is impossible, because we have already proved that $\phi(B_1(0,0))=B_1(0,0)$.

{\bf (a)} $\Rightarrow$ {\bf (d)}:   This is identical to the proof of {\bf (a)} $\Rightarrow$ {\bf (c)} except with the direction of iteration reversed.

{\bf (c)} $\Rightarrow$ {\bf (e)} and {\bf (d)} $\Rightarrow$ {\bf (e)}: These are the same as the proof of {\bf (b)} $\Rightarrow$ {\bf (e)}.  Because the filled Julia sets $J^+(\phi)$ and $J^-(\phi)$ are $\phi$-invariant, $\phi(B_1(0,0))=B_1(0,0)$ follows if either $J^+(\phi)=B_1(0,0)$
 or $J^-(\phi)=B_1(0,0)$.
\end{proof}

Theorem~\ref{GoodReductionCrit} gives complete information about the filled Julia sets in the region $\Hcal_\I$.  The following proposition explains, for the other three regions $\Hcal_\II^+$, $\Hcal_\II^-$, and $\Hcal_\III$, whether the filled Julia sets are empty or nonempty, bounded or unbounded, and under what conditions it occurs that $J^\pm(\phi_{a,b})=J(\phi_{a,b})$.

\begin{thm}\label{JuliaSetBoundedNonempty} Let $(a,b)\in\Hcal$ and let $\phi=\phi_{a,b}:K^2\to K^2$ be the associated H\'enon map.

\begin{itemize}
  	\item[{\bf (a)}] If $(a,b)\in\Hcal_\II^+$ then $J^+(\phi)$, $J^-(\phi)$, and $J(\phi)$ are nonempty, $J^+(\phi)$ is unbounded, and $J^-(\phi)=J(\phi)=\bigcap_{n\geq0}\phi^n(S_R)$; in particular $J^-(\phi)$ is bounded.
  	\item[{\bf (b)}] If $(a,b)\in\Hcal_\II^-$ then $J^+(\phi)$, $J^-(\phi)$, and $J(\phi)$ are nonempty, $J^-(\phi)$ is unbounded, and $J^+(\phi)=J(\phi)=\bigcap_{n\geq0}\phi^{-n}(S_R)$; in particular $J^+(\phi)$ is bounded.
  	\item[{\bf (c)}] If $(a,b)\in\Hcal_\III$ and $a$ is a square in $K$ then $J^+(\phi)$, $J^-(\phi)$, and $J(\phi)$ are nonempty, and $J^+(\phi)$ and  $J^-(\phi)$ are unbounded.
  	\item[{\bf (d)}] If $(a,b)\in\Hcal_\III$ and $a$ is not a square in $K$ then $J^+(\phi)$, $J^-(\phi)$, and $J(\phi)$ are empty.
\end{itemize}
\end{thm}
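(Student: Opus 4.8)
I would handle the four parts by three different mechanisms. Parts \textbf{(a)} and \textbf{(b)} come out of the filtration of $\S$\ref{FiltrationSect} — in particular the dichotomy of Proposition~\ref{Filtration2}, that on these two regions exactly one of $\phi$, $\phi^{-1}$ preserves $S_R$ — together with the involution $\iota$ of Proposition~\ref{conj winv} and Lemma~\ref{FilledJuliaAndInvolution}. Part \textbf{(d)} rests on a Hensel-type obstruction forced by the non-squareness of $a$. Part \textbf{(c)} combines the existence of periodic points (Proposition~\ref{fp criteria}) with a successive-approximation construction of a bounded-forward-orbit point of large norm.

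\textbf{Parts (a) and (b).} Suppose $(a,b)\in\Hcal_\II^+$, so $R=1$, $S_R=\Ocal^2$ is compact, and $\phi(S_R)\subseteq S_R$ by Proposition~\ref{Filtration2}\textbf{(a)}. Then $A:=\bigcap_{n\ge0}\phi^n(S_R)$ is a nested intersection of nonempty compact sets, hence nonempty, and plainly $A\subseteq J^-(\phi)$. For the reverse inclusion the key step is $J^-(\phi)\cap S_R^+=\emptyset$: a point of $J^-(\phi)$ has backward orbit avoiding $S_R^-$ (Proposition~\ref{J+,J-}), that backward orbit cannot remain in $S_R^+$ for all $n$ (Proposition~\ref{Filtration}\textbf{(f)}), and once it enters $S_R$ it stays there (Proposition~\ref{Filtration}\textbf{(e)} together with the avoidance of $S_R^-$); so a point of $J^-(\phi)\cap S_R^+$ would force $\phi$ to carry some point of $S_R$ into $S_R^+$, contradicting $\phi(S_R)\subseteq S_R$. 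Hence $J^-(\phi)\subseteq S_R$, and by $\phi$-invariance its entire backward orbit stays in $S_R$, giving $J^-(\phi)=A$; forward-invariance of $S_R$ then gives $J^-(\phi)\subseteq J^+(\phi)$, so $J(\phi)=J^-(\phi)=A$ is nonempty and bounded. That $J^+(\phi)$ is unbounded follows by picking $p\in S_R$ with $\phi^{-1}(p)\notin S_R$ (possible here since $\phi^{-1}(S_R)\not\subseteq S_R$, by Proposition~\ref{Filtration2}\textbf{(b)}): then $q:=\phi^{-1}(p)\in J^+(\phi)$ lies in $S_R^-$ (it is not in $S_R^+$, whose points escape forward), so $\|\phi^{-n}(q)\|\to+\infty$ by Proposition~\ref{Filtration}\textbf{(d)}, and all $\phi^{-n}(q)$ lie in $J^+(\phi)$. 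Part \textbf{(b)} is then obtained by applying part \textbf{(a)} to $\phi_{\iota(a,b)}$ (which lies in $\Hcal_\II^+$) and transporting the conclusions across the conjugacy $\lambda$ of Proposition~\ref{conj winv}, using Lemma~\ref{FilledJuliaAndInvolution} and Lemma~\ref{FiltrationandInvolution} together with $\|\lambda(x,y)\|=|b|\,\|(x,y)\|$ and $|b|>1$.

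\textbf{Part (d).} Here $R=|a|^{1/2}>\max(1,|b|)$. As in part \textbf{(a)} one checks, using Proposition~\ref{Filtration}\textbf{(a)}--\textbf{(c)} and Proposition~\ref{J+,J-}, that every point of $J^+(\phi)$ has some forward iterate in $A^+:=\bigcap_{n\ge0}\phi^{-n}(S_R)$, so it suffices to prove $A^+=\emptyset$. A point $(x_0,y_0)\in A^+$ lies in $S_R$, so $|y_0|\le R$ and hence $|a+by_0|=|a|=R^2$; and $\phi(x_0,y_0)\in S_R$ forces $|x_0^2-(a+by_0)|\le R$. If $R\notin|K^\times|$ then $|x_0|<R$, so $|x_0^2-(a+by_0)|=R^2>R$, a contradiction; otherwise $|x_0|=R$, so $x_0^2/(a+by_0)$ is a $1$-unit, hence a square (odd residue characteristic), so $a+by_0$ is a square, while $a/(a+by_0)=(1+by_0/a)^{-1}$ is also a $1$-unit (as $|by_0/a|\le|b|/R<1$) hence a square, making $a$ a square — again a contradiction. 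Thus $A^+=\emptyset$, so $J^+(\phi)=\emptyset$; $J^-(\phi)=\emptyset$ follows by the backward analogue (or by the involution, which preserves $\Hcal_\III$ and the non-squareness of $a$), and hence $J(\phi)=\emptyset$.

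\textbf{Part (c), and the main obstacle.} Now $a=\alpha^2$ and $R=|\alpha|>\max(1,|b|)$. Proposition~\ref{fp criteria}\textbf{(a)} (cf.\ Proposition~\ref{PeriodicIIandIII}\textbf{(c)}) provides a fixed point in $K^2$, which has constant orbit and hence lies in $J(\phi)\subseteq J^\pm(\phi)$, so all three sets are nonempty. For unboundedness it is enough, by Proposition~\ref{J+,J-}, Proposition~\ref{Filtration}\textbf{(d)}, and $\phi$-invariance of $J^+(\phi)$, to exhibit one point of $J^+(\phi)\cap S_R^-$, i.e.\ one point $(x_0,y_0)$ with $\|(x_0,y_0)\|>R$ and bounded forward orbit; then $J^-(\phi)$ is unbounded by the involution (which preserves $\Hcal_\III$ and sends $a$ to the square $(\alpha/b)^2$). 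I would take $x_0=\alpha$, so that $\phi(\alpha,y_0)=(by_0,\alpha)$, and seek $y_0$ with $|y_0|=R/|b|>R$ (so $(\alpha,y_0)\in S_R^-$) for which the forward orbit — governed by the recurrence $x_{n+1}=a+bx_{n-1}-x_n^2$ — remains in $S_R$; writing $t=by_0$, this is a sequence of conditions $|p_n(t)|\le R$ on the polynomials $p_n$ defined by $p_0=\alpha$, $p_1=t$, $p_{n+1}=a+bp_{n-1}-p_n^2$. The crux, and the main obstacle, is that in $\Hcal_\III$ neither $\phi$ nor $\phi^{-1}$ preserves $S_R$, so this cannot be reduced to a single nested intersection of compact sets; instead one must show that the admissible set at stage $N$, $\{\,|t|=R:|p_n(t)|\le R\text{ for }0\le n\le N\,\}$, is a nonempty closed subset of $\{|t|=R\}$, each stage imposing one further quadratic congruence that is solvable precisely because $a$ is a square, and then invoke compactness. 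This is exactly the mechanism behind the horseshoe of $\S$\ref{RegionIII}, and the construction runs closely parallel to the analysis given there.
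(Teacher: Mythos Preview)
Your treatment of parts \textbf{(a)}, \textbf{(b)}, and \textbf{(d)} is correct and essentially identical to the paper's: the same filtration arguments, the same reduction of \textbf{(b)} to \textbf{(a)} via the involution, and the same Hensel/Krasner obstruction in \textbf{(d)} (your detour through $a+by_0$ and 1-units is a cosmetic variant of the paper's direct estimate $|x_0^2-a|<|a|$; the case split on whether $R\in|K^\times|$ is harmless but unnecessary). For part \textbf{(c)}, your plan for nonemptiness (fixed points) and your recognition that unboundedness ultimately rests on the horseshoe machinery of \S\ref{RegionIII} both match the paper, which explicitly defers that step to the remark following Theorem~\ref{TopologicalConjThm}.

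There is, however, a genuine gap in your concrete shortcut for the unboundedness of $J^+(\phi)$ in \textbf{(c)}. You claim the constructed point $(\alpha,y_0)$ lies in $S_R^-$ because ``$|y_0|=R/|b|>R$'', but $R/|b|>R$ holds only when $|b|<1$, and $\Hcal_\III$ allows $|b|\geq1$ (the defining inequality is merely $|a|>|b|^2$). When $|b|\geq1$ your successive approximation still produces a point $(\alpha,y_0)\in J^+(\phi)$, but now it lies in $S_R$, and you would need a separate argument that it is not in $J(\phi)$---which is not automatic, since for suitable choices in the approximation the point \emph{can} land in $J(\phi)$. The paper avoids this case analysis by using the full output of \S\ref{RegionIII}: once one knows $J^+(\phi)\cap S=\bigcup_s V(f^s)$ with each $V(f^s)$ a graph over \emph{all} of $I$, and that $J(\phi)\subseteq I\times(D^\circ_{|\gamma|}(\gamma)\cup D^\circ_{|\gamma|}(-\gamma))$, one simply takes the point $(f^s(0),0)$, whose $y$-coordinate $0$ forces it out of $J(\phi)$; its backward orbit then escapes and witnesses the unboundedness of $J^+(\phi)$ uniformly in $|b|$.
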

\begin{proof}
In this proof we use the shorthand notation $(x_n,y_n)=\phi^n(x,y)$ for the $n$-th iterate of the point $(x,y)=(x_0,y_0)$ under $\phi$, where $n\in\ZZ$.

{\bf (a)} Suppose that  $(a,b)\in\Hcal_\II^+$.  We first show that $J^+(\phi)$ is unbounded.  Proposition~\ref{Filtration2} implies in this case that 
\begin{equation}\label{Inclusion1}
\phi(S_R)\subseteq S_R
\end{equation}
and 
\begin{equation}\label{NonInclusion1}
\phi^{-1}(S_R)\not\subseteq S_R.
\end{equation}
The non-inclusion $(\ref{NonInclusion1})$ means that there exists $(x_0,y_0)\notin S_R$ for which $\phi(x_0,y_0)\in S_R$.  The inclusion $(\ref{Inclusion1})$ shows that $(x_0,y_0)$ has forward orbit contained in $S_R$ and so $(x_n,y_n)\in J^+(\phi)$ for all $n\in\ZZ$.  On the other hand, because $(x_0,y_0)$ is in $J^+(\phi)$ but not in $S_R$, Proposition~\ref{J+,J-} shows that $(x_0,y_0)\in S_R^-$.  Proposition~\ref{Filtration} part {\bf (d)} then implies that $\|(x_n,y_n)\|\to+\infty$ as $n\to-\infty$, and hence $J^+(\phi)$ is unbounded.

We next show that $J^-(\phi)=J(\phi)$.  It is trivial that $J(\phi)\subseteq J^-(\phi)$.  If $(x_0,y_0)\in J^-(\phi)$, then by Proposition~\ref{J+,J-} and Proposition~\ref{Filtration} part {\bf (f)}, some backward iterate $(x_n,y_n)$ is in $S_R$.  But then $(x_n,y_n)$ has bounded forward orbit by $(\ref{Inclusion1})$.   Consequently $(x_0,y_0)$ has bounded forward orbit; that is $(x_0,y_0)\in J^+(\phi)$ and hence $(x_0,y_0)\in J(\phi)$, completing the proof that $J^-(\phi)=J(\phi)$.

Finally, we show that $J(\phi)$ is nonempty, which trivially implies that $J^+(\phi)$ and $J^-(\phi)$ are nonempty.  In fact we have
\begin{equation}\label{JuliaIntersection}
J(\phi) = \bigcap_{n\geq0}\phi^n(S_R).
\end{equation}
For if $(x,y)$ is an element of the right hand side of $(\ref{JuliaIntersection})$, then $\phi^{-n}(x,y)\in S_R$ for all $n\geq0$, so $(x,y)\in J^-(\phi)=J(\phi)$.  Conversely, if $(x,y)\in J(\phi)$, then $\phi^{-n}(x,y)\in J(\phi)\subseteq S_R$ for all $n\geq0$ (using the $\phi$-invariance of $J(\phi)$ and Proposition~\ref{J+,J-}) and therefore $(x,y)$ is an element of the right hand side of $(\ref{JuliaIntersection})$.  The inclusion $(\ref{Inclusion1})$ implies that the right hand side of $(\ref{JuliaIntersection})$ is a nested intersection of compact sets, and so it is nonempty.

{\bf (b)}  When $(a,b)\in\Hcal_\II^-$, we have $\iota(a,b)\in\Hcal_\II^+$.  So applying part {\bf (a)} of this Proposition to $\phi_{\iota(a,b)}$ and using Lemma~\ref{FilledJuliaAndInvolution}, we obtain the statements in part {\bf (b)} of this Proposition.

{\bf (c)}  If $(a,b)\in\Hcal_\III$ and $a$ is a square in $K$, then $J^+(\phi)$, $J^-(\phi)$, and $J(\phi)$ are nonempty because, by Proposition~\ref{fp criteria}, $\phi$ has fixed points in $K^2$.  We delay the proof that $J^+(\phi)$ and $J^-(\phi)$ are unbounded until $\S$~\ref{TopConSect}.

{\bf (d)}  Assume that $(a,b)\in\Hcal_\III$.  Thus $|a|>\max(1,|b|^2)$, and in the notation of $\S$~\ref{FiltrationSect} we have $R=|a|^{1/2}$.  

We first show that if $J^+(\phi)$ is nonempty, then $a$ is a square in $K$.  Assume that $(x_0,y_0)\in J^+(\phi)$.  Then by Proposition~\ref{J+,J-} and Proposition~\ref{Filtration} part {\bf (c)}, all but finitely many points $(x_n,y_n)$ in the forward orbit of $(x_0,y_0)$ are in $S_R$, so replacing $(x_0,y_0)$ with some forward iterate, without loss of generality we may assume that $(x_n,y_n)\in S_R$ for all $n\geq0$.  In particular this implies that
\begin{equation}\label{ForwardJuliaBounds}
|x_0|=|a|^{1/2}, |x_1|\leq|a|^{1/2} \text{ and } |y_0|\leq |a|^{1/2}.
\end{equation}
The upper bounds $|x_n|\leq |a|^{1/2}$ and $|y_n|\leq |a|^{1/2}$ follow from $(x_n,y_n)\in S_R$, and if $|x_0|<|a|^{1/2}$ then we would have $|x_{1}|=|a+by_0-x_0^2|=|a|>|a|^{1/2}$, a contradiction, so $|x_0|=|a|^{1/2}$.  Finally, from $x_{1}=a+by_0-x_0^2$ we calculate $|x_0^2-a|=|by_0-x_1|<|a|=|x_0|^2$.  By Krasner's lemma and the preceding inequality, $a$ is so close to the square $x_0^2$ that $a$ itself is a square.

If $J^-(\phi)$ is nonempty, then by Lemma~\ref{FilledJuliaAndInvolution}, $J^+(\phi_{\iota(a,b)})$ is nonempty.  As $\iota(a,b)=(\frac{a}{b^2},\frac{1}{b})$, we conclude from the previous case that $\frac{a}{b^2}$ is a square in $K$, and hence $a$ is a square in $K$.  Finally, if $J(\phi)$ is nonempty then $J^\pm(\phi)$ are nonempty, and so again $a$ is a square in $K$.
\end{proof}

\begin{prop} \label{J clo com}
Let $(a,b)\in\Hcal$ and let $\phi=\phi_{a,b}:K^2\to K^2$ be the associated H\'enon map.  The sets $J^+(\phi)$, $J^-(\phi)$, and $J(\phi)$ are closed.
\end{prop}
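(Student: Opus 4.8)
The plan is to show that each of $J^+(\phi)$, $J^-(\phi)$, and $J(\phi)$ is the complement of an open set, or equivalently the intersection of closed sets, using the filtration description already established in Proposition~\ref{J+,J-}. The key observation is that the filtration sets interact well with the non-Archimedean topology: since $S_R^+$ is defined by the strict inequality $\|(x,y)\|>R$ together with $|x|\geq|y|$, and since $R\in|K^\times|$ so that $\{\|(x,y)\|>R\}$ is open, and since $\{|x|\geq|y|\}$ is \emph{closed} in $K^2$ (indeed it is closed because $|x|,|y|$ take values in a discrete set, so $\{|x|\geq|y|\}$ is also open), the set $S_R^+$ is topologically closed (in fact clopen) in $K^2$. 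The same applies to $S_R^-$.

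First I would record that $S_R$, $S_R^+$, and $S_R^-$ are all clopen in $K^2$; this follows from the discreteness of $|K^\times|$ and the fact that each is cut out by non-strict and strict inequalities among continuous functions $|x|$, $|y|$, $\|(x,y)\|$ taking values in a discrete set. Next, since $\phi$ and $\phi^{-1}$ are polynomial maps, hence continuous, each iterate $\phi^{-n}$ and $\phi^n$ is continuous, so $\phi^{-n}(S_R^+)$ and $\phi^n(S_R^-)$ are open (being preimages of clopen sets under continuous maps — or indeed clopen). Therefore $\bigcup_{n\geq0}\phi^{-n}(S_R^+)$ and $\bigcup_{n\geq0}\phi^{n}(S_R^-)$ are open, and by Proposition~\ref{J+,J-} we conclude
\begin{equation*}
J^+(\phi) = K^2\setminus\bigcup_{n\geq0}\phi^{-n}(S_R^+), \qquad J^-(\phi) = K^2\setminus\bigcup_{n\geq0}\phi^{n}(S_R^-)
\end{equation*}
are closed. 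Finally $J(\phi)=J^+(\phi)\cap J^-(\phi)$ is an intersection of two closed sets, hence closed.

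An alternative and perhaps cleaner route, which I might present instead to avoid fussing over the topology of $S_R^\pm$, is to argue directly from the definitions: a point $(x,y)$ fails to lie in $J^+(\phi)$ exactly when $\|\phi^n(x,y)\|$ is unbounded, and by Proposition~\ref{Filtration}~{\bf (a)} this happens if and only if $\phi^n(x,y)\in S_R^+$ for some $n\geq 0$; since $S_R^+$ is open and $\phi^n$ is continuous, the set of such $(x,y)$ is open, so $J^+(\phi)$ is closed. The argument for $J^-(\phi)$ is symmetric via Proposition~\ref{Filtration}~{\bf (d)} (or via Lemma~\ref{FilledJuliaAndInvolution}), and $J(\phi)$ is closed as an intersection.

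The only genuine subtlety — and it is minor — is justifying that $S_R^+$ (hence its iterated preimages) is an open set; this rests on the discreteness of the value group, which guarantees that the locus $\|(x,y)\|>R$ is open, and the locus $|x|\ge|y|$ is clopen. Everything else is formal: continuity of polynomial maps and the fact that unions of open sets are open, intersections of closed sets are closed. I expect no real obstacle here.
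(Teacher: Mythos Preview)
Your proposal is correct and takes essentially the same approach as the paper: use Proposition~\ref{J+,J-} to write the complement of $J^+(\phi)$ (and similarly $J^-(\phi)$) as a union of open sets $\phi^{-n}(S_R^+)$, hence open, and then take the intersection for $J(\phi)$. The paper's proof is simply a terser version of your first argument, omitting the explicit verification that $S_R^+$ is open; your additional care on that point (and the alternative via Proposition~\ref{Filtration}) is fine but not needed beyond what the paper assumes as evident.
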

\begin{proof}
Proposition~\ref{J+,J-} shows that the complement of $J^+(\phi)$ is a union of open sets, and hence open.  Thus $J^+(\phi)$ is closed.  Similarly, $J^-(\phi)$ is closed, and we conclude that $J(\phi) = J^+(\phi)\cap J^-(\phi)$ is closed.  
\end{proof}


\section{Regions $\Hcal_\I$ and $\Hcal_{\II}^+$: recurrence and attractors}\label{RegionsIandII+}


\subsection{Recurrence in regions $\Hcal_\I$ and $\Hcal_{\II}^+$}\label{RecurrentSect}  In this section we study the forward dynamics of the H\'enon map $\phi=\phi_{a,b}$ for $(a,b)$ in regions $\Hcal_\I$ and $\Hcal_\II^+$ of the parameter space $\Hcal$; thus we assume $|a|\leq1$ and $|b|\leq1$.   By the strong triangle inequality, it follows that $\phi$ is nonexpanding on $B_1(0,0)$; that is, for each ball $B_r(x,y)$ in $B_1(0,0)$ and each $n\geq1$, $\phi^n(B_r(x,y))\subseteq B_r(\phi^n(x,y))$.  

We say a ball $B_r(x,y)$ in $B_1(0,0)$ is $\phi$-periodic if $\phi^n(B_r(x,y))\subseteq B_r(x,y)$ for some $n\geq1$; the minimal such $n$ is the {\em minimal period} of the ball, which we denote by $m(B_r(x,y))$.  If a ball is not periodic we say it is {\em strictly preperiodic}; this is appropriate since there are only finitely many balls in $B_1(0,0)$ of any given radius, and thus all balls have finite forward orbit.  In the discussion of periodic balls it is convenient to abuse notation slightly and use $\phi^n(B_r(x,y))$ to refer to the ball $B_r(\phi^n(x,y))$; since $B_r(\phi^n(x,y))$ is the unique ball of radius $r$ containing $\phi^n(B_r(x,y))$, this causes no harm.  (Cycles of periodic balls are called {\em fuzzy cycles} by Anashin-Khrennikov \cite{MR2533085}.)

\begin{prop}\label{AttractorPeriodic}
Let $(a,b)\in\Hcal_\I\cup\Hcal_\II^+$, let $\phi=\phi_{a,b}:K^2\to K^2$ be the associated H\'enon map, and let $(x,y)\in K^2$.  Then $(x,y)\in J(\phi)$ if and only if every ball $B_r(x,y)\subseteq B_1(0,0)$ containing $(x,y)$ is $\phi$-periodic.
\end{prop}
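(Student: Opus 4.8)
To prove Proposition~\ref{AttractorPeriodic}, the plan is to convert the membership ``$(x,y)\in J(\phi)$'' into a statement about the backward orbit of $(x,y)$, and then to detect periodicity of the balls $B_r(x,y)$ through the dynamics induced by $\phi$ on the finite set of balls of radius $r$ inside the closed unit ball. Since $(a,b)\in\Hcal_\I\cup\Hcal_\II^+$ we have $R=1$ and $S_R=B_1(0,0)=\Ocal^2$, which is compact because $K$ is locally compact; moreover $\phi(B_1(0,0))\subseteq B_1(0,0)$ by Proposition~\ref{Filtration2}~{\bf (a)}, and $\phi$ is nonexpanding on $B_1(0,0)$ as recorded at the start of this section. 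Because $J(\phi)\subseteq S_R=B_1(0,0)$ by Proposition~\ref{J+,J-} and the right-hand condition concerns balls inside $B_1(0,0)$, we assume throughout that $(x,y)\in B_1(0,0)$. The first reduction I would establish is
\[
(x,y)\in J(\phi)\qquad\Longleftrightarrow\qquad \phi^{-n}(x,y)\in B_1(0,0)\ \text{for all } n\geq 0 .
\]
The forward direction holds because $J(\phi)$ is $\phi$-invariant and contained in $B_1(0,0)$; the reverse holds because a backward orbit confined to $B_1(0,0)$ is bounded, so $(x,y)\in J^-(\phi)$, while $\phi(B_1(0,0))\subseteq B_1(0,0)$ forces the forward orbit into $B_1(0,0)$, so $(x,y)\in J^+(\phi)$.

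Next I would set up the combinatorial bookkeeping. For each $r\in|K^\times|$ with $r\leq 1$, let $X_r$ be the finite set of balls of radius $r$ contained in $B_1(0,0)$. Since $\phi$ is nonexpanding on $B_1(0,0)$, it induces a well-defined self-map $\sigma_r\colon X_r\to X_r$ with $\sigma_r(B_r(p))=B_r(\phi(p))$, whence $\sigma_r^n(B_r(p))=B_r(\phi^n(p))$ for all $n\geq 0$. Unwinding the definition in the excerpt, the ball $B_r(x,y)$ is $\phi$-periodic exactly when it is a periodic point of $\sigma_r$; and since $X_r$ is finite, the nested sequence $X_r\supseteq\sigma_r(X_r)\supseteq\cdots$ stabilizes, $\sigma_r$ permutes the stable value, and hence the periodic points of $\sigma_r$ are precisely the elements of $\bigcap_{n\geq 0}\sigma_r^n(X_r)$. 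In particular, $B_r(x,y)$ is $\phi$-periodic if and only if $B_r(x,y)\in\sigma_r^n(X_r)$ for every $n\geq 0$.

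With these observations the two implications are quick. If $(x,y)\in J(\phi)$ then $\phi^{-n}(x,y)\in B_1(0,0)$ for all $n\geq 0$, so $B_r(\phi^{-n}(x,y))\in X_r$ and $\sigma_r^n\big(B_r(\phi^{-n}(x,y))\big)=B_r(\phi^n(\phi^{-n}(x,y)))=B_r(x,y)$; thus $B_r(x,y)\in\sigma_r^n(X_r)$ for all $n$, so $B_r(x,y)$ is $\phi$-periodic, and this holds for every $r\leq 1$. Conversely, assume every $B_r(x,y)\subseteq B_1(0,0)$ is $\phi$-periodic and fix $n\geq 1$. For each integer $k\geq 0$, periodicity of $B_{|\pi|^k}(x,y)$ gives $B_{|\pi|^k}(x,y)\in\sigma_{|\pi|^k}^n(X_{|\pi|^k})$, so there is a ball $\mathcal{C}\in X_{|\pi|^k}$ with $\sigma_{|\pi|^k}^n(\mathcal{C})=B_{|\pi|^k}(x,y)$; picking any $c_k\in\mathcal{C}$ yields $c_k\in B_1(0,0)$ and $\|\phi^n(c_k)-(x,y)\|\leq|\pi|^k$, so $\phi^n(c_k)\to(x,y)$. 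By compactness of $B_1(0,0)$ a subsequence of $(c_k)$ converges to some $c\in B_1(0,0)$, and continuity of $\phi^n$ gives $\phi^n(c)=(x,y)$, i.e.\ $\phi^{-n}(x,y)=c\in B_1(0,0)$. As $n$ was arbitrary, $\phi^{-n}(x,y)\in B_1(0,0)$ for all $n\geq 0$, so $(x,y)\in J(\phi)$ by the first displayed equivalence. The step I expect to be the only non-formal one, and the crux of the proof, is this last compactness passage: it upgrades the points $c_k$, whose images merely approximate $(x,y)$, to a genuine $\phi^n$-preimage of $(x,y)$ lying in $B_1(0,0)$ --- equivalently it asserts $(x,y)\in\overline{\phi^n(B_1(0,0))}=\phi^n(B_1(0,0))$, the image being compact hence closed. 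This is exactly the point at which local compactness of $K$ is used; the rest is a formal comparison between the dynamics of $\phi$ on points and the induced finite dynamics on balls.
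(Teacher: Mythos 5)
Your proof is correct and follows essentially the same route as the paper's: both directions hinge on the reduction $(x,y)\in J(\phi)\Leftrightarrow\phi^{-n}(x,y)\in B_1(0,0)$ for all $n$, on the finiteness of the set of radius-$r$ balls in $B_1(0,0)$, and on the fact that $\phi^n(B_1(0,0))$ is closed. The only differences are cosmetic: you phrase the forward implication through the stable set $\bigcap_n\sigma_r^n(X_r)$ of the induced finite dynamics (where the paper applies the pigeonhole principle directly to the backward orbit), and you prove the converse directly via compactness of $\phi^n(B_1(0,0))$ rather than by contrapositive via continuity of $\phi^{-n}$ --- two dual formulations of the same closedness fact.
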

\begin{proof}
Consider a point $(x,y)\in J(\phi)$ and a ball $B_r(x,y)$ in $B_1(0,0)$ containing it.  Since the entire backward orbit of $(x,y)$ is contained in $B_1(0,0)$, it must meet some ball $B_r(u,v)$ in $B_1(0,0)$ at least twice (as there are only finitely many balls of radius $r$ in $B_1(0,0)$).  The ball $B_r(u,v)$ is therefore periodic, and thus $B_r(x,y)$ is one of the balls in its cycle.  

Conversely, suppose $(x,y)\not\in J(\phi)$; thus $(x,y)\not\in J^-(\phi)$ by Theorem~\ref{GoodReductionCrit} and Theorem~\ref{JuliaSetBoundedNonempty}, so $\phi^{-n}(x,y)\not\in B_1(0,0)$ for some $n\geq1$.  We may therefore select $r\in|K^\times|$ so small that $\phi^{-n}(B_r(x,y))\cap B_1(0,0)=\emptyset$.  Since the ball $B_r(x,y)$ does not meet $\phi^n(B_1(0,0))$, it cannot be $\phi$-periodic.
\end{proof}

Let $\phi:M\to M$ be a continuous self-map of a metric space $M$.  A point $\alpha\in M$ is said to be {\em recurrent} for $\phi$ if, for each open neighborhood $U$ of $\alpha$, there exists some $n\geq1$ such that $\phi^n(\alpha)\in U$.  Denote by $R(\phi)$ the set of all recurrent points for $\phi$.

\begin{cor}
Let $(a,b)\in\Hcal_\I\cup\Hcal_\II^+$ and let $\phi=\phi_{a,b}:K^2\to K^2$ be the associated H\'enon map.  Then $J(\phi)=R(\phi)$.
\end{cor}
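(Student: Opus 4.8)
The plan is to prove the two inclusions $J(\phi)\subseteq R(\phi)$ and $R(\phi)\subseteq J(\phi)$ in turn. I will use throughout that for $(a,b)\in\Hcal_\I\cup\Hcal_\II^+$ one has $R=1$ and $S_R=B_1(0,0)$, that $\phi(B_1(0,0))\subseteq B_1(0,0)$ by Proposition~\ref{Filtration2}, that $\phi$ is nonexpanding on $B_1(0,0)$, and that $J(\phi)\subseteq B_1(0,0)$ by Proposition~\ref{J+,J-}; also, the closed balls $B_r(x,y)$ with $r\in|K^\times|$ form a basis of the topology and are simultaneously open and closed, and every periodic point of $\phi$ lies in $J(\phi)$. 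The single observation behind both inclusions is that a ball $B_r(x,y)\subseteq B_1(0,0)$ is $\phi$-periodic if and only if $\phi^n(x,y)\in B_r(x,y)$ for some $n\ge1$: since $\phi$ is nonexpanding, $\phi^n(B_r(x,y))\subseteq B_r(\phi^n(x,y))$, while $\phi^n(x,y)\in B_r(x,y)$ forces $B_r(\phi^n(x,y))=B_r(x,y)$ because two balls of equal radius that intersect coincide.

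For $J(\phi)\subseteq R(\phi)$, take $(x,y)\in J(\phi)$, so $(x,y)\in B_1(0,0)$, and let $U$ be an open neighborhood of $(x,y)$. Choosing $r\in|K^\times|$ with $r\le1$ small enough that $B_r(x,y)\subseteq U$, Proposition~\ref{AttractorPeriodic} tells us $B_r(x,y)$ is $\phi$-periodic, so by the observation $\phi^n(x,y)\in B_r(x,y)\subseteq U$ for some $n\ge1$. Hence $(x,y)$ is recurrent.

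For $R(\phi)\subseteq J(\phi)$, I would prove the contrapositive: if $(x,y)\notin J(\phi)$ then $(x,y)\notin R(\phi)$, splitting into two cases. If $(x,y)\in B_1(0,0)\setminus J(\phi)$, then Proposition~\ref{AttractorPeriodic} produces a ball $B_r(x,y)\subseteq B_1(0,0)$ containing $(x,y)$ which is not $\phi$-periodic, so by the observation $\phi^n(x,y)\notin B_r(x,y)$ for every $n\ge1$; the open set $B_r(x,y)$ is then a neighborhood of $(x,y)$ disjoint from its whole forward orbit, so $(x,y)$ is not recurrent. If instead $(x,y)\notin B_1(0,0)$, I would appeal to the filtration: by Propositions~\ref{Filtration} and~\ref{Filtration2} the forward orbit of $(x,y)$ is either unbounded, whence $\|\phi^n(x,y)\|\to\infty$, or bounded, whence eventually contained in $B_1(0,0)$; in either case there is $N$ with $\|\phi^n(x,y)\|\neq\|(x,y)\|$ for all $n\ge N$, so the strong triangle inequality keeps $\phi^n(x,y)$ at distance at least $\|(x,y)\|>0$ from $(x,y)$ for $n\ge N$. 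Since no forward iterate can equal $(x,y)$ --- a periodic point would lie in $J(\phi)\subseteq B_1(0,0)$ --- the finitely many iterates $\phi(x,y),\dots,\phi^{N-1}(x,y)$ are likewise at positive distance from $(x,y)$, so a sufficiently small ball $B_r(x,y)$ meets no point of the forward orbit, and again $(x,y)\notin R(\phi)$. Combining the two inclusions gives $J(\phi)=R(\phi)$.

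The conceptual core is the equivalence stated in the first paragraph: together with Proposition~\ref{AttractorPeriodic} it reduces the corollary almost immediately to the known description of $J(\phi)$ in terms of periodic balls. The only fiddly point is the second case of the second inclusion --- ruling out recurrence for points outside the unit ball --- which requires stitching together a few parts of the filtration propositions to control the norm of the forward orbit and invoking the fact that periodic points are confined to $B_1(0,0)$; it involves no genuinely new idea.
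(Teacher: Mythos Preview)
Your proof is correct and follows essentially the same approach as the paper's. Both directions rest on Proposition~\ref{AttractorPeriodic} together with the filtration. The only minor organizational difference is in the case $(x,y)\notin B_1(0,0)$: the paper dispatches $S_R^+$ and $S_R^-$ separately by citing Proposition~\ref{Filtration} parts (a) and (c) directly (implicitly using that $S_R^-$ is open and that a recurrent point must return infinitely often), whereas you merge these into a single norm dichotomy and then handle the finitely many early iterates explicitly via the fact that periodic points lie in $J(\phi)\subseteq B_1(0,0)$. Both arguments are valid and amount to the same thing.
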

\begin{proof}
Given a point $(x,y)\in J(\phi)$, consider a ball $B_r(x,y)$ in $B_1(0,0)$ containing it.  By Proposition~\ref{AttractorPeriodic}, there exists $n\geq1$ for which $\phi^{n}(B_{r}(x,y))\subseteq B_{r}(x,y)$, and hence $(x,y)$ is recurrent.

Conversely, suppose that $(x,y)\notin J(\phi)$.  Proposition~\ref{Filtration} part {\bf (a)} implies that no points in $S_R^+$ can be recurrent, and Proposition~\ref{Filtration} part {\bf (c)} implies that no points in $S_R^-$ can be recurrent.  We are left with the case $(x,y)\in B_1(0,0)\setminus J(\phi)$.   By Proposition~\ref{AttractorPeriodic}, there exists $0<r <1$ such that $B_r(x,y)$ is not $\phi$-periodic.  Thus, $B_{r}(x,y)$ is a neighborhood of $(x,y)$ to which no forward iterate of $(x,y)$ returns, showing $(x,y)$ is not recurrent.
\end{proof}


\subsection{Attractors in region $\Hcal_{\II}^+$}
\label{StrageAttDefSect}  In this section restrict attention to maps $\phi=\phi_{a,b}$ for $(a,b)$ in region $\Hcal_\II^+$; thus we assume $|a|\leq1$ and $|b|<1$.   

Let $(M,d)$ be a metric space and let $\phi:M\to M$ be a homeomorphism.  By an {\em attractor} for $\phi$ we mean a subset $\Acal\subseteq M$ satisfying the following properties: (i) $\Acal$ is nonempty and compact; (ii) $\Acal$ is $\phi$-invariant; (iii) there exists an open set $U\subseteq M$ which properly contains $\Acal$, such that for all $\beta\in U$, 
$$
\lim_{n\to+\infty}\dist(\phi^n(\beta),\Acal)=0
$$
where $\dist(x,\Acal)=\min\{d(x,\alpha)\mid \alpha\in\Acal\}$.  The union of all such $U$ is the {\em basin of attraction} for $\Acal$.  We say the attractor $\Acal$ is {\em indecomposable} if it cannot be expressed as a disjoint union of two attractors.  (Some authors require this condition in the definition of an attractor.)  If $\Acal$ contains only one point we say it is an {\em attracting fixed point} of $\phi$.  More generally, if $\Acal$ is a $\phi$-cycle we say it is an {\em attracting cycle}.  

\begin{thm}\label{MainAttractorTheorem}
Let $(a,b)\in \Hcal_\II^+$ and let $\phi=\phi_{a,b}:K^2\to K^2$ be the associated H\'enon map.  Then $J(\phi)$ is an attractor for $\phi$, with basin of attraction containing $B_1(0,0)$.
\end{thm}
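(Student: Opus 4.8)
The plan is to verify the three clauses in the definition of an attractor for the set $J(\phi)$, using the open set $U=B_1(0,0)$ as witness. Since $(a,b)\in\Hcal_\II^+$ we have $|a|\le1$ and $|b|<1$, so $R=1$ and $S_R=B_1(0,0)=\Ocal^2$, which is compact by local compactness of $K$. The structural input I need is already in hand: by Proposition~\ref{J clo com} and Proposition~\ref{J+,J-} the set $J(\phi)$ is closed and bounded, hence compact; it is $\phi$-invariant as noted in the introduction; and by Theorem~\ref{JuliaSetBoundedNonempty}~{\bf (a)} it is nonempty and equal to $\bigcap_{n\ge0}\phi^n(S_R)$. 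Moreover $U=B_1(0,0)$ is clopen, hence open, and it \emph{properly} contains $J(\phi)$: if $J(\phi)=B_1(0,0)$ then $(a,b)\in\Hcal_\I$ by Theorem~\ref{GoodReductionCrit}, contradicting $(a,b)\in\Hcal_\II^+$.

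So the whole content is the convergence statement: for every $\beta\in B_1(0,0)$ one has $\dist(\phi^n(\beta),J(\phi))\to0$ as $n\to+\infty$. First I would note that Proposition~\ref{Filtration2}~{\bf (a)} gives $\phi(S_R)\subseteq S_R$, so the forward orbit of $\beta$ never leaves $B_1(0,0)$ and in particular $\phi^n(\beta)\in\phi^n(S_R)$ for all $n\ge0$. I would then prove the stronger uniform assertion $\sup_{z\in\phi^n(S_R)}\dist(z,J(\phi))\to0$. Since $\{\phi^n(S_R)\}_{n\ge0}$ is a nested decreasing sequence of nonempty compact sets with intersection $J(\phi)$, this is a routine compactness argument: were the supremum bounded below by some $\varepsilon>0$, one could choose $z_n\in\phi^n(S_R)$ with $\dist(z_n,J(\phi))\ge\varepsilon$, extract a convergent subsequence by compactness of $S_R$, and observe that its limit lies in each $\phi^m(S_R)$ (each being closed), hence in $J(\phi)=\bigcap_m\phi^m(S_R)$, contradicting $\dist(z_n,J(\phi))\ge\varepsilon$. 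Combining, $\dist(\phi^n(\beta),J(\phi))\le\sup_{z\in\phi^n(S_R)}\dist(z,J(\phi))\to0$, which is exactly condition (iii) with $U=B_1(0,0)$.

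An alternative more in the spirit of $\S$\ref{RegionsIandII+} avoids the compactness limit: fix $r\in|K^\times|$ with $r<1$; since $\phi$ is nonexpanding on $B_1(0,0)$ it induces a self-map of the finite set of balls of radius $r$ in $B_1(0,0)$, so the forward orbit of $B_r(\beta)$ eventually enters a cycle of $\phi$-periodic balls. Any $\phi$-periodic ball $B_r(x,y)$ in $B_1(0,0)$ of period $p$ meets $J(\phi)$: the sets $\phi^{kp}(B_r(x,y))$ are nested nonempty compact, so share a point $z$, and $z$ has full backward orbit inside $B_1(0,0)$ because $\phi(B_1(0,0))\subseteq B_1(0,0)$, whence $z\in J^-(\phi)=J(\phi)$ by Theorem~\ref{JuliaSetBoundedNonempty}~{\bf (a)}. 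Therefore $\dist(\phi^n(\beta),J(\phi))\le r$ for all sufficiently large $n$, and letting $r\to0$ through $|K^\times|$ finishes the argument.

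I do not expect a serious obstacle: essentially all the dynamical input has been packaged into the filtration results of $\S$\ref{FiltrationSect} and Theorem~\ref{JuliaSetBoundedNonempty}, and the remaining work is bookkeeping. The one point to handle with care is that the limit set in the convergence argument must be identified with $J(\phi)$ itself — this is precisely the identity $J(\phi)=\bigcap_{n\ge0}\phi^n(S_R)$ — together with checking that the witness open set $U$ contains $J(\phi)$ strictly, so that $J(\phi)$ genuinely satisfies the definition of an attractor given in $\S$\ref{StrageAttDefSect}.
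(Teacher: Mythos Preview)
Your proposal is correct and your main argument is essentially identical to the paper's: both verify the attractor axioms using $J(\phi)=\bigcap_{n\ge0}\phi^n(S_R)$ from Theorem~\ref{JuliaSetBoundedNonempty}~{\bf (a)} and then prove the uniform statement $\sup_{z\in\phi^n(S_R)}\dist(z,J(\phi))\to0$ by the same nested-compact-sets subsequence argument. Your treatment is in fact slightly more careful (you explicitly justify proper containment via Theorem~\ref{GoodReductionCrit}), and your alternative periodic-ball argument is a pleasant extra that the paper does not give.
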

\begin{proof}
Since $|a|\leq1$ and $|b|<1$, $\phi$ restricts to a nonsurjective function $\phi:B_1(0,0)\to B_1(0,0)$, and we proved in Proposition~\ref{JuliaSetBoundedNonempty} {\bf (a)} that $J(\phi)$ is nonempty, compact, and it can be expressed as a properly nested intersection 
\begin{equation}\label{JuliaIntersection2}
J(\phi) = \bigcap_{n\geq0}\phi^n(B_1(0,0)).
\end{equation}

In order to show that $J(\phi)$ satisfies part (iii) of the definition of an attractor with $U=B_1(0,0)$ (which is open and properly contains $J(\phi)$), it suffices to show that 
$$
M_n:=\sup\{\dist((x,y),J(\phi))\mid(x,y)\in\phi^n(B_1(0,0))\}\to0 
$$
as $n\to+\infty$.  The sequence $\{M_n\}$ is nonincreasing since the intersection $(\ref{JuliaIntersection2})$ is nested.  By compactness we have  $M_n=\dist((\alpha_n,\beta_n),J(\phi))$ for some $(\alpha_n,\beta_n)\in\phi^n(B_1(0,0))$, and again using compactness and passing to a subsequence we have $(x_{n_k},y_{n_k})\to(\alpha,\beta)$ for some $(\alpha,\beta)\in B_1(0,0)$.  Since each $\phi^n(B_1(0,0))$ is closed and contains all but finitely many of the terms $\{(x_{n_k},y_{n_k})\}$, we have $(\alpha,\beta)\in\phi^n(B_1(0,0))$ for all $n\geq0$, and thus $(\alpha,\beta)\in J(\phi)$.  We conclude that $M_{n_k}\leq\|(x_{n_k},y_{n_k})-(\alpha,\beta)\|\to0$, and hence $M_n\to0$ since the sequence is nonincreasing.
\end{proof}

Attractors arising from nested intersections of the type $(\ref{JuliaIntersection2})$ are called {\em trapped attracting sets} by Milnor \cite{milnor:dynamicslectures}.

If the attractor described in Theorem~\ref{MainAttractorTheorem} is an infinite set, then it may be considered a non-Archimedean analogue of the strange attractor admitted by the real Henon map.  Based on Theorem~\ref{StrangeAttractorThmIntro} and the calculations described in $\S$~\ref{NumericalCalc}, we venture the following conjecture.

\begin{conj}\label{AttractorConjecture}
For each complete, locally compact non-Archimedean field $K$ with odd residue characteristic, there exists $(a,b)\in\Hcal_\II^+$ for which $J(\phi_{a,b})$ is an infinite set. 
\end{conj}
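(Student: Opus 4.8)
This statement is a conjecture, and in full generality it remains open; it is established for $K=\QQ_3$ in Theorem~\ref{StrangeAttractorThmIntro}. The natural line of attack reduces the problem to periodic points. Suppose that for a given $K$ one can exhibit some $(a,b)\in\Hcal_\II^+$ for which $\phi_{a,b}$ has \emph{no} periodic point in $K^2$. By Theorem~\ref{MainAttractorTheorem}, $J(\phi_{a,b})$ is then nonempty, compact, and $\phi_{a,b}$-invariant. Were it finite, $\phi_{a,b}$ would restrict to a permutation of a nonempty finite set and hence have periodic points, which would lie in $J(\phi_{a,b})$ since filled Julia sets contain all periodic points --- a contradiction. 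Thus $J(\phi_{a,b})$ is infinite, and the conjecture holds for $K$. So it suffices to produce, over each such $K$, one H\'enon map in region $\Hcal_\II^+$ with no periodic point.

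To locate such parameters --- and to describe the attractor they produce --- one follows the template of the $\QQ_3$ case. Take $\abs{b}<1$, say $b=\pi$, and choose $a\in\Ocal^\times$ so that the reduced map $\overline{\phi}(x,y)=(\overline{a}-x^2,x)$ on $\FF_K^2$ has a single periodic orbit, which one arranges to be a fixed point $(\xi,\xi)$, while the discriminants $(b-1)^2+4a$ and $4a-3(b-1)^2$ of Proposition~\ref{fp criteria} and their higher-period analogues fall into ``bad'' square classes, so that low-period cycles are excluded by Newton-polygon or Krasner-type arguments. Combining Proposition~\ref{AttractorPeriodic} with the nonexpansion of $\phi$ on $B_1(0,0)$ identifies $J(\phi_{a,b})$ with the inverse limit $\varprojlim_k\Per(\phi_{a,b}^{(k)})$, where $\phi_{a,b}^{(k)}$ denotes the reduction of $\phi_{a,b}$ to $(\Ocal/\pi^k\Ocal)^2$ and $\Per$ its finite set of periodic points; the transition maps $\Per(\phi_{a,b}^{(k+1)})\to\Per(\phi_{a,b}^{(k)})$ are surjective by a pigeonhole argument applied to $\phi_{a,b}^{(k+1)}$ on the fibre over a periodic point, so $J(\phi_{a,b})$ is infinite exactly when $\abs{\Per(\phi_{a,b}^{(k)})}\to\infty$. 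The goal is then to prove, by induction on $k$, that $\Per(\phi_{a,b}^{(k)})$ is a single cycle of rapidly growing length --- equivalently, that $\phi_{a,b}$ restricted to $J(\phi_{a,b})$ is topologically conjugate to the $\pi$-adic odometer (translation by $1$ on $\Ocal$, or on $\ZZ_p\hookrightarrow\Ocal$). Such a conclusion would reproduce every feature recorded in Theorem~\ref{StrangeAttractorThmIntro}: uncountable, Haar-null, minimal, uniquely ergodic, and free of periodic points.

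The crux is the inductive lifting step. It demands uniform control --- in $k$, and across all admissible $K$ --- of the $\pi$-adic Newton polygons of the period-$\ell$ equations $\phi_{a,b}^{\ell}(x,y)=(x,y)$ for every $\ell$ at once, together with the question of which roots actually lie in $K$. This bookkeeping is genuinely sensitive to the residue-field size $q=\abs{\FF_K}$, to the ramification of $K$, and to square-class data in $K$; over $\QQ_3$ the explicit choices $q=3$, $b=3$, $a\equiv 2\pmod{9}$ make it tractable, but carrying the odometer picture through every scale for general $K$ --- even uniformly over the fields $\QQ_p$ with $p$ odd --- is exactly what is left open and would, as the authors note, be a substantial advance.
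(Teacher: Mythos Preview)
You correctly identify this as an open conjecture; the paper does not prove it either, and your acknowledgment that only the case $K=\QQ_3$ is settled (via Theorem~\ref{StrangeAttractorThmIntro}/\ref{StrangeAttractorThm}) matches the paper exactly. Your sketch of the strategy is accurate and aligns with the paper's own treatment: the identification of $J(\phi_{a,b})$ with the inverse limit of periodic points of the reductions $\phi_{a,b}^{(k)}$ is precisely Proposition~\ref{AttractorPeriodic} combined with Lemma~\ref{PeriodicBallsLemma}, and your criterion ``$|\Per(\phi_{a,b}^{(k)})|\to\infty$'' is equivalent to the paper's Theorem~\ref{UnboundedPk} (unbounded minimal periods of periodic balls). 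Your odometer description of the $\QQ_3$ attractor and the inductive lifting obstacle for general $K$ also accurately summarize Lemma~\ref{psikLemma} and the remarks in $\S$\ref{NumericalCalc}. One minor point: your initial reduction (``no periodic point in $K^2$ implies $J$ infinite'') is a valid sufficient condition, but the paper's framework via Theorem~\ref{UnboundedPk} is slightly more flexible, since it would also detect infinite attractors that coexist with finitely many periodic points---as your own later discussion of the $\phi_{1,5}$ data in $\S$\ref{NumericalCalc} would require.
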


To investigate this conjecture further we prove Theorem~\ref{UnboundedPk}, which gives a necessary and sufficient condition for the finiteness of $J(\phi_{a,b})$ in terms of $\phi$-periodicity.  In order to prove Theorem~\ref{UnboundedPk} we require a lemma.

\begin{lem}\label{PeriodicBallsLemma}
Let $(a,b)\in \Hcal_\II^+$ and let $\phi=\phi_{a,b}:K^2\to K^2$ be the associated H\'enon map.  Let $B_r(x,y)$ be a $\phi$-periodic ball in $B_1(0,0)$ of minimal period $m$.
\begin{itemize}
	\item[{\bf (a)}]  If $B_r(x,y)\subseteq B_{r'}(x,y)\subseteq B_1(0,0)$, then $B_{r'}(x,y)$ is $\phi$-periodic with minimal period $m'\mid m$. 
	\item[{\bf (b)}]  For each $0<r'<r$ in the value group $|K^\times|$, there exists at least one $\phi$-periodic ball $B_{r'}(x',y')\subseteq B_r(x,y)$.  
	\item[{\bf (c)}]  $J(\phi)\cap B_r(x,y)$ contains a point which is either non-periodic or periodic of minimal period at least $m$.
\end{itemize}
\end{lem}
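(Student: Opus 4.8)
The engine behind all three parts is the non-expanding property of $\phi$ on $B_1(0,0)$ recorded above, together with the ultrametric fact that two balls of the same radius are either disjoint or identical. I would first record the preliminary observation that a $\phi$-periodic ball $B_r(x,y)$ of minimal period $m$ satisfies $\phi^m(x,y)\in B_r(x,y)$, and hence $B_r(\phi^m(x,y))=B_r(x,y)$; pushing this forward one iterate at a time, the sequence of balls $i\mapsto B_r(\phi^i(x,y))$ is genuinely periodic, not merely eventually periodic, with minimal period exactly $m$. This is what legitimizes treating $\phi^n(B_r(x,y))$ as the ball $B_r(\phi^n(x,y))$ throughout, and it is used implicitly in all three parts.

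For \textbf{(a)}: since $B_r(x,y)\subseteq B_{r'}(x,y)$, the point $\phi^m(x,y)$ also lies in $B_{r'}(x,y)$, so $B_{r'}(\phi^m(x,y))=B_{r'}(x,y)$; by non-expansion $\phi^m(B_{r'}(x,y))\subseteq B_{r'}(\phi^m(x,y))=B_{r'}(x,y)$, so $B_{r'}(x,y)$ is $\phi$-periodic and $m$ is among its periods. Since the periods of a genuinely periodic sequence --- here $i\mapsto B_{r'}(\phi^i(x,y))$ --- form exactly the positive multiples of the minimal period $m'$, we conclude $m'\mid m$.

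For \textbf{(b)}: the map $\phi^m$ sends $B_r(x,y)$ into itself, and by non-expansion it carries each radius-$r'$ ball contained in $B_r(x,y)$ into another radius-$r'$ ball contained in $B_r(x,y)$, hence induces a self-map $f$ on the finite nonempty set of radius-$r'$ balls inside $B_r(x,y)$. Iterating $f$ from any element must eventually land on a cycle; any ball $C$ on that cycle satisfies $\phi^{mt}(C)\subseteq C$ for some $t\ge1$, so $C$ is a $\phi$-periodic ball of radius $r'$ contained in $B_r(x,y)$.

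For \textbf{(c)}: I would first show $J(\phi)\cap B_r(x,y)\neq\emptyset$. Since $\phi^m(B_r(x,y))\subseteq B_r(x,y)$, the set $\bigcap_{k\ge0}\phi^{mk}(B_r(x,y))$ is a nested intersection of nonempty compact sets and so contains some point $z$; because $B_r(x,y)\subseteq B_1(0,0)$ and the sets $\phi^n(B_1(0,0))$ are nested decreasing with $J(\phi)=\bigcap_{n\ge0}\phi^n(B_1(0,0))$ by Theorem~\ref{JuliaSetBoundedNonempty}~\textbf{(a)}, the point $z$ lies in $\phi^n(B_1(0,0))$ for every $n$, i.e.\ $z\in J(\phi)\cap B_r(x,y)$. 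Now take any $p\in J(\phi)\cap B_r(x,y)$: if $p$ is not periodic we are done, and if $p$ is periodic of minimal period $\ell$, then $p\in B_r(x,y)$ together with $\phi^\ell(p)=p$ give, via non-expansion, $\phi^\ell(B_r(x,y))\subseteq B_r(\phi^\ell(p))=B_r(p)=B_r(x,y)$, so $\ell$ is a period of the ball $B_r(x,y)$ and hence $m\mid\ell$, in particular $\ell\ge m$. The only step carrying any subtlety is the nonemptiness of $J(\phi)\cap B_r(x,y)$: one cannot argue naively inside $B_r(x,y)$ alone but must route through the description of $J(\phi)$ as a decreasing intersection of the $\phi^n(B_1(0,0))$ and a compactness argument; everything else is bookkeeping with ultrametric balls and with self-maps of finite sets.
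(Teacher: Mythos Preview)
Your arguments for \textbf{(a)} and \textbf{(b)} are correct and essentially identical to the paper's: both use that $\phi^m$ is nonexpanding to pass periodicity up to larger balls and to induce a self-map on the finite set of sub-balls. Your preliminary remark identifying the minimal period of a ball with the minimal period of the sequence $i\mapsto B_r(\phi^i(x,y))$ makes the divisibility $m'\mid m$ in \textbf{(a)} immediate; the paper instead phrases this as $m=m'\ell$ where $\ell$ is the minimal period of $B_r(x,y)$ under $\phi^{m'}$, but it is the same fact.

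For \textbf{(c)} your argument is correct but takes a genuinely different route to the nonemptiness of $J(\phi)\cap B_r(x,y)$. The paper iterates part \textbf{(b)} to build a nested sequence of periodic balls of radii shrinking to zero, and then invokes Proposition~\ref{AttractorPeriodic} (the characterization of $J(\phi)$ via periodic balls) to place the intersection point in $J(\phi)$. You instead take the nested intersection $\bigcap_{k\ge0}\phi^{mk}(B_r(x,y))$, pull a point $z$ out by compactness, and use the trapped-attracting-set description $J(\phi)=\bigcap_{n\ge0}\phi^n(B_1(0,0))$ from Theorem~\ref{JuliaSetBoundedNonempty}~\textbf{(a)} together with the monotonicity of this intersection to conclude $z\in J(\phi)$. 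Both work; the paper's route keeps the argument self-contained within the periodic-ball framework (and naturally feeds into the later use of Proposition~\ref{AttractorPeriodic}), while yours is arguably more direct once the description of $J(\phi)$ as a nested intersection is in hand. Your final step---observing that \emph{any} point of $J(\phi)\cap B_r(x,y)$ satisfies the conclusion, since a periodic point of period $\ell$ forces $\phi^\ell(B_r(x,y))\subseteq B_r(x,y)$ and hence $m\mid\ell$---is the same as the paper's, and in fact yields the slightly sharper $m\mid\ell$ rather than merely $\ell\ge m$.
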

\begin{proof}
{\bf (a)}   Since $\phi^m(B_r(x,y))\subseteq  B_r(x,y)$ and $B_r(x,y)\subseteq B_{r'}(x,y)$, it follows that $\phi^m(B_{r'}(x,y))\cap B_{r'}(x,y)\neq\emptyset$.  Since $\phi$ is nonexpanding, it follows that $\phi^m(B_{r'}(x,y))\subseteq B_{r'}(x,y)$ and hence $B_{r'}(x,y)$ is periodic.  Letting $m'$ denote the minimal period of $B_{r'}(x,y)$ and letting $\ell$ denote the minimal period of $B_r(x,y)$ with respect to $\phi^{m'}$, we have $m=m'\ell$.

{\bf (b)}  Since $\phi^{m}(B_r(x,y))\subseteq B_r(x,y)$ and $\phi$ is nonexpanding, $\phi^m$ induces a self-map on the set of balls of radius $r'$ contained in $B_r(x,y)$; since this set is finite, there is a periodic ball.

{\bf (c)}  Iterating part {\bf (b)}, there exists a nested sequence of $\phi$-periodic balls
$$
B_r(x,y)\supseteq B_{r_1}(x_1,y_1)\supseteq B_{r_2}(x_2,y_2)\supseteq \dots
$$
where $r_k\to0$.  By compactness,
$$
\bigcap_{k\geq1}B_{r_k}(x_k,y_k)=\{(x_0,y_0)\}
$$
for some $(x_0,y_0)\in B_r(x,y)$.  By part {\bf (a)}, all balls in $B_1(0,0)$ containing $(x_0,y_0)$ are $\phi$-periodic, and so $(x_0,y_0)\in J(\phi)$ by Proposition~\ref{AttractorPeriodic}.  If $(x_0,y_0)$ is not $\phi$-periodic then there is nothing left to prove.  If $(x_0,y_0)$ is $\phi$-periodic with minimal period $m_0$, then $\phi^{m_0}(B_r(x,y))\cap B_r(x,y)\neq\emptyset$ as both $\phi^{m_0}(B_r(x,y))$ and $B_r(x,y)$ contain $(x_0,y_0)$.  Since $\phi$ is nonexpanding, it follows that $\phi^{m_0}(B_{r}(x,y))\subseteq B_{r}(x,y)$ and hence the minimal period $m$ of $B_{r}(x,y)$ is at most $m_0$.
\end{proof}

\begin{thm}\label{UnboundedPk}
Let $(a,b)\in \Hcal_\II^+$ and let $\phi=\phi_{a,b}:K^2\to K^2$ be the associated H\'enon map.  Then $J(\phi)$ is finite if and only if there exists some $N\geq1$ such that $m(B_r(x,y))\leq N$ for all $\phi$-periodic balls $B_r(x,y)$ in $B_1(0,0)$.
\end{thm}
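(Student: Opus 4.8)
The plan is to prove the two implications separately, in both directions leveraging Proposition~\ref{AttractorPeriodic} and Lemma~\ref{PeriodicBallsLemma}, together with the elementary and self-contained fact that $\phi_{a,b}$ has only finitely many periodic points of each period.

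For the ``only if'' direction I would argue as follows. Assume $J(\phi)$ is finite. Since $J(\phi)$ is $\phi$-invariant and $\phi$ is a bijection of $K^2$, the restriction $\phi|_{J(\phi)}$ is a permutation of the finite set $J(\phi)$; writing $s=\#J(\phi)$, it follows that every point of $J(\phi)$ is $\phi$-periodic of minimal period at most $s$ --- in particular $J(\phi)$ contains no non-periodic point. Now take an arbitrary $\phi$-periodic ball $B_r(x,y)\subseteq B_1(0,0)$ with minimal period $m=m(B_r(x,y))$. Lemma~\ref{PeriodicBallsLemma}~{\bf (c)} produces a point of $J(\phi)\cap B_r(x,y)$ which is either non-periodic or periodic of minimal period $\geq m$; the first option being excluded, this point has minimal period between $m$ and $s$, so $m\leq s$. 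Hence $N=s$ works.

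For the ``if'' direction I would first show that every point of $J(\phi)$ is $\phi$-periodic of period at most $N$. Fix $p\in J(\phi)$; as $(a,b)\in\Hcal_\II^+$ we have $J(\phi)\subseteq B_1(0,0)$, so by Proposition~\ref{AttractorPeriodic} each ball $B_k:=B_{|\pi|^k}(p)$, $k\geq0$, is $\phi$-periodic, of minimal period $m_k\leq N$ by hypothesis. Applying Lemma~\ref{PeriodicBallsLemma}~{\bf (a)} to the inclusion $B_{k+1}\subseteq B_k$ gives $m_k\mid m_{k+1}$, so the sequence $(m_k)$ is nondecreasing and bounded, hence eventually equal to some $m^*\leq N$. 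For $k$ large we then have $\phi^{m^*}(B_k)\subseteq B_k$, and since $\bigcap_k B_k=\{p\}$ it follows that $\phi^{m^*}(p)=p$. Thus $J(\phi)\subseteq\bigcup_{m=1}^{N}\{q\in K^2\mid\phi^m(q)=q\}$, and it remains to see this union is finite. For this I would note that a point $q$ with $\phi^m(q)=q$ has the form $q=(x_0,x_{m-1})$, where $(x_0,\dots,x_{m-1})$ runs over its cyclic orbit and therefore solves the system $x_{i+1}=a+bx_{i-1}-x_i^2$ for $i\in\ZZ/m\ZZ$ (indices read mod $m$); the degree-two homogeneous part of the $i$-th equation is $x_i^2$, whose only common projective zero with the others is the origin, so the system has no solutions at infinity and hence at most $2^m$ solutions in $\Kbar^m$ by B\'ezout. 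Therefore $J(\phi)$ is finite.

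The step I expect to require the most care is the stabilization argument in the ``if'' direction: correctly invoking Lemma~\ref{PeriodicBallsLemma}~{\bf (a)} to obtain the divisibility chain $m_0\mid m_1\mid\cdots$, and then passing to the limit $\bigcap_k B_k=\{p\}$ to upgrade ``$B_k$ is eventually $\phi^{m^*}$-invariant'' to ``$p$ is a fixed point of $\phi^{m^*}$''. The only other non-formal ingredient, finiteness of the period-$m$ point set, is handled by the B\'ezout count above once one checks there are no points at infinity; everything else is bookkeeping with results already established in \S\ref{GeneralPopSection}--\S\ref{RegionsIandII+}.
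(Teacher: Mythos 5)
Your proof is correct, and the two directions have different relationships to the paper's argument. The \textbf{only if} direction is essentially the paper's: a periodic ball meets $J(\phi)$ at a periodic point (here of period $\leq s=\#J(\phi)$), and the ball's period is bounded by that point's period; invoking Lemma~\ref{PeriodicBallsLemma}~\textbf{(c)} as a black box is a small packaging difference. The \textbf{if} direction, however, genuinely diverges. The paper argues the contrapositive: assuming $J(\phi)$ infinite, it uses the B\'ezout finiteness of the period-$\leq n$ locus to extract a point that is non-periodic or of period $\geq n$, and then shrinks a ball around that point until its partial orbit separates, producing a periodic ball of period $\geq n$ and hence no uniform bound $N$. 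You instead prove the implication directly: for a fixed $p\in J(\phi)$, the nested balls $B_k=B_{|\pi|^k}(p)$ have periods $m_k$ satisfying $m_k\mid m_{k+1}\leq N$ (Lemma~\ref{PeriodicBallsLemma}~\textbf{(a)}), so the sequence stabilizes at some $m^*\leq N$, and passing to $\bigcap_k B_k=\{p\}$ shows $\phi^{m^*}(p)=p$; finiteness then follows from B\'ezout, which you verify explicitly (no solutions at infinity since the leading forms are the $x_i^2$). Your version has the mild advantage of making the conclusion quantitative --- every point of $J(\phi)$ has minimal period dividing some $m^*\leq N$, and $\#J(\phi)\leq\sum_{m\leq N}2^m$ --- while the paper's contrapositive is shorter and dovetails more directly with Proposition~\ref{AttractorPeriodic} and the disjoint-ball construction used elsewhere in $\S$\ref{RegionsIandII+}. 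Both arguments ultimately hinge on the same two inputs: Lemma~\ref{PeriodicBallsLemma} and the finiteness of periodic points of each period.
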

\begin{proof}
If $J(\phi)$ is finite then it contains only periodic points; let $N$ be the largest minimal period among all of these points.  If $B_r(x,y)$ is a $\phi$-periodic ball, then it intersects nontrivially with $J(\phi)$ by Lemma~\ref{PeriodicBallsLemma}, and so it must contain a periodic point $(\alpha,\beta)$.  The minimal period of $B_r(x,y)$ cannot be greater than the minimal period of $(\alpha,\beta)$, and hence $m(B_r(x,y))\leq N$.

Conversely, assume that $J(\phi)$ is an infinite set.  Let $n\geq1$ be arbitrary, and let $(x,y)\in J(\phi)$ be a point which is either not periodic, or else periodic with minimal period at least $n$.  Such a point must exist because, by a Bezout theorem argument (e.g. \cite{MR991490}), $\phi$ has only finitely many periodic points of any given minimal period.  Since the points $(x,y),\phi(x,y),\dots,\phi^{m-1}(x,y)$ are distinct, there exists $r\in|K^\times|$ so small that the balls 
\begin{equation}\label{DisjointBalls}
B_{r}(x,y),\phi(B_{r}(x,y)),\dots,\phi^{n-1}(B_{r}(x,y))
\end{equation} 
are disjoint.  We know that $B_{r}(x,y)$ is $\phi$-periodic by Proposition~\ref{AttractorPeriodic} and the fact that $(x,y)\in J(\phi)$, and the minimal $\phi$-period of $B_{r}(x,y)$ must be at least $n$, because the balls $(\ref{DisjointBalls})$ are disjoint.  As $n$ was arbitrary, it follows that the $N$ described in the statement of the theorem does not exist.
\end{proof}

Recalling our assumptions that $|a|\leq 1$ and $|b|<1$, we now prove a result which further specializes to the case $|a|<1$.  In this case the attractor $J(\phi_{a,b})$ is the union of two attractors, one of which is an attracting fixed point.  

\begin{prop}
Let $(a,b)\in \Hcal_\II^+$ and assume further that $|a|<1$.  Let $\phi=\phi_{a,b}:K^2\to K^2$ be the associated H\'enon map. 
\begin{itemize}
	\item[{\bf (a)}]  $\phi$ has an attracting fixed point $(c,c)\in B_1^\circ(0,0)$.
	\item[{\bf (b)}]  $\Acal=J(\phi)\setminus\{(c,c)\}$ is an attractor for $\phi$.  
	\item[{\bf (c)}]  $B_1(0,0)$ is the smallest polydisc in $K^2$ containing $J(\phi)$. 
\end{itemize}
\end{prop}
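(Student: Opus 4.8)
The plan is to treat the three parts essentially separately; only part~(b) requires real work. For part~(a), by Proposition~\ref{fp criteria} the fixed points of $\phi$ are the points $(c,c)$ with $c$ a root of $x^2-(b-1)x-a$. Since $|a|<1$ and $|b|<1$ we have $|b-1|=1$, so the Newton polygon of this quadratic consists of a segment from $(0,v(a))$ to $(1,0)$ and a segment from $(1,0)$ to $(2,0)$; each has horizontal length one, so both roots lie in $K$ — one of valuation $v(a)>0$ and one of valuation $0$. Write $c$ for the first (so $|c|=|a|<1$, hence $(c,c)\in B_1^\circ(0,0)$) and $\alpha$ for the second (so $|\alpha|=1$, and $(\alpha,\alpha)\ne(c,c)$). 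To see that $(c,c)$ is attracting, set $r=\max(|a|,|b|)<1$ and pass to coordinates $(u,v)=(x-c,y-c)$, in which $\phi$ becomes $(u,v)\mapsto(bv-2cu-u^2,\,u)$. Because the second coordinate is only a swap, $\phi$ itself is not contracting near $(c,c)$; but a short computation with the strong triangle inequality (using $|c|=|a|\le r$, $|b|\le r$, $r<1$) shows that $\phi^2$ contracts on $B_r(c,c)$ with ratio $r$, and more precisely that $\phi^2(B_\rho(c,c))\subseteq B_{r\rho}(c,c)$ for every $\rho\le r$. Since $\phi$ is nonexpanding on $B_1(0,0)$, it follows that $\phi^n(\beta)\to(c,c)$ for every $\beta\in B_r(c,c)$, so $(c,c)$ is an attracting fixed point.

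Part~(c) is then immediate: by Proposition~\ref{JuliaSetBoundedNonempty} we have $J(\phi)\subseteq B_1(0,0)=\Ocal^2$, and $J(\phi)$ contains the two fixed points $(c,c)$ and $(\alpha,\alpha)$. Their first coordinates satisfy $|c-\alpha|=|\alpha|=1$, so the first projection of $J(\phi)$ is not contained in any disc of radius $<1$, and likewise for the second projection; hence any polydisc $D_{r_1,r_2}(c_1,c_2)\supseteq J(\phi)$ has $r_1,r_2\ge1$, and since $J(\phi)\subseteq\Ocal^2$ this forces $D_{r_i}(c_i)\supseteq\Ocal$, i.e.\ the polydisc contains $B_1(0,0)$. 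For part~(b), $\Acal$ is $\phi$-invariant because $\phi$ is a bijection of $J(\phi)$ fixing $(c,c)$, and $\Acal$ is nonempty because $(\alpha,\alpha)\in\Acal$.

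The heart of part~(b) is that $(c,c)$ is isolated in $J(\phi)$; I would show $J(\phi)\cap B_r(c,c)=\{(c,c)\}$. If $P\in J(\phi)\cap B_r(c,c)$ with $P\ne(c,c)$, put $\delta=\|P-(c,c)\|\le r$ and choose $r'<\delta$ in $|K^\times|$. By Proposition~\ref{AttractorPeriodic} the ball $B_{r'}(P)$ is $\phi$-periodic, say of period $n$, so $\phi^{kn}(P)\in B_{r'}(P)$ for all $k\ge0$; but the contraction gives $\|\phi^{kn}(P)-(c,c)\|\le r^{\lfloor kn/2\rfloor}\delta$, which is $<r'$ for $k$ large, whence $\|\phi^{kn}(P)-P\|=\delta>r'$ — a contradiction. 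Therefore $\Acal=J(\phi)\cap(\Ocal^2\setminus B_r(c,c))$ is closed, hence compact. For the basin, let $W=\{\beta\in\Ocal^2:\phi^n(\beta)\to(c,c)\}=\bigcup_{n\ge0}\bigl(\phi^{-n}(B_r(c,c))\cap\Ocal^2\bigr)$, an increasing union of clopen sets. I would prove $W$ is clopen by a compactness argument: the compact sets $K_N=\phi^N(\Ocal^2)\cap\phi^{-1}(B_r(c,c))$ decrease to $J(\phi)\cap\phi^{-1}(B_r(c,c))$, which equals $\{(c,c)\}$ (by injectivity of $\phi$ together with $J(\phi)\cap B_r(c,c)=\{(c,c)\}$), hence $K_{N_0}\subseteq B_r(c,c)$ for some $N_0$; this makes the union defining $W$ stabilize at stage $N_0$, so $W=\phi^{-N_0}(B_r(c,c))\cap\Ocal^2$ is clopen. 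Now take $U=\Ocal^2\setminus W$. It is open, it contains $\Acal$ (orbits of points of $\Acal$ stay in $\Acal$ and so avoid $B_r(c,c)$, giving $\Acal\cap W=\emptyset$), and it properly contains $\Acal$ (since $J(\phi)$ has empty interior, $\Acal$ is not open). For $\beta\in U$ the forward orbit never meets $B_r(c,c)$, so $\|\phi^n(\beta)-(c,c)\|>r$ for all $n$; since $\dist(\phi^n(\beta),J(\phi))\to0$ by Theorem~\ref{MainAttractorTheorem} and $J(\phi)=\Acal\sqcup\{(c,c)\}$, this forces $\dist(\phi^n(\beta),\Acal)\to0$. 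Hence $\Acal$ is an attractor.

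The main obstacle is precisely the basin step: one must rule out the possibility that the basin of $(c,c)$ accumulates on $\Acal$, i.e.\ that $W$ fails to be closed. The compactness argument above — trapping the decreasing sequence $K_N$ inside the \emph{open} ball $B_r(c,c)$ — is the decisive point, and it only becomes available \emph{after} one knows $(c,c)$ is isolated in $J(\phi)$, so the order of the two claims matters. The remaining ingredients — the explicit two-step contraction estimate, the Newton-polygon bookkeeping, and the fact that $J(\phi)$ has empty interior (any ball has backward iterates that eventually leave $\Ocal^2$, since $\phi^{-1}$ expands the second coordinate by $|b|^{-1}>1$) — are routine.
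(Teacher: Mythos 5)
Your proofs of parts (a) and (c) essentially match the paper's: in (a) the Newton polygon of $x^2-(b-1)x-a$ gives one root of absolute value $|a|<1$ and one of absolute value $1$, and a two-step contraction estimate shows $(c,c)$ is attracting; in (c) the two fixed points $(c,c)$ and $(\alpha,\alpha)$ with $|c-\alpha|=1$ force any polydisc containing $J(\phi)$ to have both radii at least $1$.

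For part (b) you take a genuinely different route. The paper observes that $V=\{(x,y)\in B_1(0,0):|x|=1\}$ is forward-invariant and proves the stronger inclusion $J(\phi)\cap U=\{(c,c)\}$ where $U=B_1(0,0)\setminus V$ (by iterating $\phi^{-1}$ inside $U$ and noting the two-step expansion would contradict boundedness in a discretely valued field); hence $\Acal\subseteq V$ and $\Acal=\bigcap_{n\geq0}\phi^n(V)$, and the argument of Theorem~\ref{MainAttractorTheorem} applies verbatim with $V$ as the trapping region. You instead establish only that $(c,c)$ is isolated in $J(\phi)$ (via Proposition~\ref{AttractorPeriodic} and the two-step contraction), and build the trapping neighborhood of $\Acal$ as the complement in $\Ocal^2$ of the full basin $W$ of $(c,c)$, proving $W$ clopen by a compactness bootstrap: once $\phi^{N_0}(\Ocal^2)\cap\phi^{-1}(B_r(c,c))\subseteq B_r(c,c)$, the increasing union defining $W$ stabilizes at $\phi^{-N_0}(B_r(c,c))\cap\Ocal^2$. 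Both arguments are correct. The paper's is shorter and more structural, reusing Theorem~\ref{MainAttractorTheorem} and yielding the sharper information $\Acal\subseteq V$; yours is more self-contained and works directly from the dynamics without spotting the invariant set $V$, at the cost of the bootstrapping step needed to make $W$ closed.

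One small inaccuracy: your aside that $J(\phi)$ has empty interior because ``$\phi^{-1}$ expands the second coordinate by $|b|^{-1}$'' is not literally right, since the second coordinate of $\phi^{-1}(x,y)$ is $\tfrac{1}{b}(-a+x+y^2)$, whose absolute value can decrease. The clean justification is that the Jacobian determinant of $\phi$ is the constant $-b$, so $\phi$ scales Haar measure by $|b|<1$; hence the Haar measure of $J(\phi)=\bigcap_{n\geq0}\phi^n(\Ocal^2)$ is $\lim_{n}|b|^n=0$ and $J(\phi)$ has empty interior. The conclusion you need (that $\Acal$ is not open, so $U'$ properly contains it) stands.
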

\begin{proof}
{\bf (a)}  Inspection of the Newton polygon of the fixed point equation $x^2-(b-1)x-a=0$ shows that it has one root $c\in K$ with $|c|=|a|<1$, and another root $d$ with $|d|=1$, and thus $(c,c)$  and $(d,d)$  are fixed points.  To verify that $\{(c,c)\}$ is attracting, we partition $B_1(0,0)$ into the two sets
\begin{equation}\label{AttFixedPartition}
\begin{split}
U & = \{(x,y)\in B_1(0,0) \mid |x|<1\} \\
V & = \{(x,y)\in B_1(0,0) \mid |x|=1\}
\end{split}
\end{equation}
and we will show that $U$ is a basin of attraction for $(c,c)$.  Conjugating by $(x,y)\mapsto (x+c,y+c)$ (which preserves the sets $U$ and $V$), it suffices to show that $(0,0)$ is attracting for the map
$$
\psi(x,y)=\phi(x+c,y+c)-(c,c)=(by-2cx-x^2,x).
$$
If $(x,y)\in U$ with $|x|\leq r<1$, set $X=by-2cx-x^2$ and $Y=x$.  Then $|X|\leq \max(|b|r,|c|r,r^2)$ and $|Y|\leq r$, so
\begin{equation}\label{psiBound}
\|\psi^2(x,y)\|=\|\psi(X,Y)\|\leq\max(|b|r,|c|r,r^2).
\end{equation}
Since $|b|<1$ and $|c|=|a|<1$, we conclude $\psi^n(x,y)\to(0,0)$ as $n\to+\infty$.

{\bf (b)}  It is easy to see using the strong triangle inequality that $\phi(V)\subseteq V$.  We now claim that $J(\phi)\cap U=\{(c,c)\}$; in other words, no point of $U$ is in $J(\phi)$ except the attracting fixed point $(c,c)$ itself.  For if there exists some $(x,y)\in J(\phi)\cap U$ with $(x,y)\neq(c,c)$, then the entire backward orbit $\{\phi^{-n}(x,y)\mid n\geq0\}$ must be contained in $U$ by Proposition~\ref{J+,J-} and the fact that $\phi(V)\subseteq V$.  Again conjugating as in part {\bf (a)}, it follows that $\psi^{-n}(x-c,y-c)\in U$ for all $n\geq1$.  But then $\|\psi^{-2n}(x-c,y-c)\|$ would be strictly increasing as $n\to+\infty$ by $(\ref{psiBound})$, an impossibility as $U$ is bounded and $K$ is discretely valued.

Set $\Acal=J(\phi)\setminus\{(c,c)\}$, which is nonempty because it contains the fixed point $(d,d)$.  Since $\Acal\subseteq V$, it now follows from $(\ref{JuliaIntersection2})$ that $\Acal = \bigcap_{n\geq0}\phi^n(V)$.  The proof that $\Acal$ is an attractor now follows from the same argument used to prove Theorem~\ref{MainAttractorTheorem}.

{\bf (c)}  Suppose $D$ is a polydisc such that $J(\phi) \subset D$. Then $(c,c)\in D$ and so we may write $D=D_{r_1,r_2}(c,c)$ for some radii $r_1,r_2$.  But $(d,d)\in D$ as well and $|c-d|=|d|=1$ and so $r_1\geq1$ and $r_2\geq1$.
\end{proof}

\subsection{Examples in $\QQ_3$}\label{StrangeAttSection}  In this section we explore two examples over the field $\QQ_3$ of $3$-adic numbers.   As usual, denote by $|\cdot|_3$ the absolute value on $\QQ_3$, normalized so that $|3|_3=1/3$, and set $\ZZ_3=\{x\in\QQ_3\mid|x|_3\leq1\}$.

\begin{thm}\label{StrangeAttractorThm}
For $a\in D_{1/9}(2)$, define $\phi=\phi_{a,3}:\QQ_3^2\to\QQ_3^2$ by $\phi(x,y)=(a+3y-x^2,x)$.
\begin{itemize}
	\item[{\bf (a)}]  $B_{1/3}(1,1)$ is fixed by $\phi$ and the other eight balls of radius $1/3$ in $B_{1}(0,0)$ are strictly preperiodic.  For each $k\geq1$, there is a cycle of balls of radius $1/3^{k+1}$ in $B_{1/3}(1,1)$ of minimal period $3^k$, and all other balls of radius $1/3^{k+1}$ in $B_{1/3}(1,1)$ are strictly preperiodic.  Moreover, each periodic ball of radius $1/3^{k}$ contains exactly three periodic balls of radius $1/3^{k+1}$.
	\item[{\bf (b)}]  The attractor $J(\phi)$ is uncountably infinite, has Haar measure zero in $\QQ_3^2$, and contains no periodic points.  Each point of $J(\phi)$ has dense forward orbit in $J(\phi)$.  In particular, $J(\phi)$ is indecomposable. 
	\item[{\bf (c)}]  There exists a probability measure $\mu_\phi$ supported on $J(\phi)$ with the property that the forward orbit of any point in $B_1(0,0)$ is $\mu_\phi$-equidistributed; in other words, 
$$
\lim_{n\to+\infty}\frac{1}{n}\sum_{i=1}^{n}f(\phi^i(x,y))=\int fd\mu_\phi
$$
for all $(x,y)\in B_1(0,0)$ and all continuous $f:B_1(0,0)\to\RR$.
\end{itemize}
\end{thm}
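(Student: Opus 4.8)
The plan is to establish part {\bf (a)} first — it is the technical core — and then to derive parts {\bf (b)} and {\bf (c)} formally from the nested structure of periodic balls that {\bf (a)} exhibits; the point is that {\bf (a)} shows $\phi|_{J(\phi)}$ is topologically conjugate to the $3$-adic odometer $\xi\mapsto\xi+1$ on $\ZZ_3$.

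For {\bf (a)}, I would first reduce modulo $3$: since $|a-2|_3\le1/9$ we have $\bar a=2$ in $\FF_3$, so $\bar\phi(\bar x,\bar y)=(2-\bar x^2,\bar x)$ has image $\{(2,0),(1,2),(1,1)\}$, on which it acts by $(2,0)\mapsto(1,2)\mapsto(1,1)\mapsto(1,1)$; hence $B_{1/3}(1,1)$ is the unique $\phi$-periodic ball of radius $1/3$ and is fixed, the other eight strictly preperiodic. Next I would conjugate $\phi|_{B_{1/3}(1,1)}$ by $x=1+3s,\ y=1+3t$ to the self-map $\psi(s,t)=(1-2s+3(c+t-s^2),s)$ of $\ZZ_3^2$, where $a=2+9c$; its reduction $(\bar s,\bar t)\mapsto(\bar s+1,\bar s)$ has image the $3$-cycle $(1,0)\mapsto(2,1)\mapsto(0,2)\mapsto(1,0)$, so $\psi$ has exactly three periodic balls of radius $1/3$ forming a single $3$-cycle, the other six strictly preperiodic. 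The crux is a renormalization statement: I would exhibit a class $\Ccal$ of nonexpanding self-maps of $\ZZ_3^2$, cut out by congruence conditions on the low-order Taylor coefficients of $g$ and of $g^{-1}$, with $\psi\in\Ccal$, such that (i) every $g\in\Ccal$ reduces modulo $3$ to an affine map $(\bar s,\bar t)\mapsto(\bar s+e,\bar s+f)$ with $e\ne0$, hence — by the same elementary count as above — has exactly three periodic balls of radius $1/3$ forming one $3$-cycle with the rest strictly preperiodic; and (ii) for $g\in\Ccal$ and $C$ one of its period-$3$ balls of radius $1/3$, the rescaled first-return map $g_C:=\delta_C^{-1}\circ g^3\circ\delta_C$, where $\delta_C:\ZZ_3^2\to C$, $\delta_C(u,v)=(\text{center of }C)+3(u,v)$, again lies in $\Ccal$. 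A direct computation of $\psi^3$ on the residue class $(0,2)$ gives $g_C(u,v)=(w_3,w_2)$ with $w_1=c+2-2u+3v-9u^2$, $w_2=c-2+3u-8w_1-9w_1^2$, $w_3=c-4+3w_1-14w_2-9w_2^2$, whose reduction $(\bar u,\bar v)\mapsto(\bar u+2,\bar u+2\bar c)$ is of the form in (i); I expect the general verification of (ii) — that renormalization preserves the defining congruences of $\Ccal$ — to be the main obstacle. Granting (i) and (ii), iterating renormalization yields {\bf (a)}: each $\phi$-periodic ball of radius $1/3^k$ in $B_{1/3}(1,1)$ contains exactly three $\phi$-periodic balls of radius $1/3^{k+1}$, and the $3^k$ periodic balls of radius $1/3^{k+1}$ form a single $\phi$-cycle of length $3^k$ (the internal $3$-cycle supplied by (i) applied to the renormalized map assembles, over the length-$3^{k-1}$ cycle of level-$k$ balls, into one length-$3^k$ cycle).

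Next I would deduce {\bf (b)}. Here $R=1$, so $S_R=B_1(0,0)$; by Proposition~\ref{AttractorPeriodic} a point lies in $J(\phi)$ iff every ball around it in $B_1(0,0)$ is $\phi$-periodic, and by {\bf (a)} the only $\phi$-periodic balls of radius $1/3^k$ lie inside $B_{1/3}(1,1)$, so $J(\phi)=\bigcap_k P_k$ where $P_k$ is the union of the $3^{k-1}$ periodic balls of radius $1/3^k$, nested three-into-one over $P_{k-1}$. Thus a point of $J(\phi)$ is exactly a compatible choice of one such ball at each level, which identifies $J(\phi)$ with $\varprojlim_k\ZZ/3^{k}\cong\ZZ_3$; and since $\phi$ cyclically permutes the level-$k$ balls with period $3^{k-1}$, this identifies $\phi|_{J(\phi)}$ with the odometer $\xi\mapsto\xi+1$. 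Uncountability follows (and also from Theorem~\ref{UnboundedPk}, the periods $3^k$ being unbounded); the Haar measure of $J(\phi)$ in $\QQ_3^2$ is zero since $P_k$ has measure $3^{k-1}\cdot(3^{-k})^2=3^{-k-1}\to0$; and the absence of periodic points, the density of every forward orbit in $J(\phi)$, and the indecomposability of $J(\phi)$ all follow from minimality of the odometer (every forward orbit $\{\xi+n:n\ge0\}$ is dense in $\ZZ_3$ because $\NN$ is dense, a minimal system on more than one point has no periodic orbit, and a minimal set cannot be partitioned into two nonempty closed invariant subsets).

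Finally, for {\bf (c)} let $\mu_\phi$ be the push-forward to $J(\phi)$ of Haar measure on $\ZZ_3$ under the conjugacy of {\bf (b)}; it is a probability measure with support $J(\phi)$, and since the odometer is uniquely ergodic, $\frac1n\sum_{i=1}^nf(\phi^i(z))\to\int f\,d\mu_\phi$ for every $z\in J(\phi)$ and every continuous $f$. To upgrade this to arbitrary $(x,y)\in B_1(0,0)$ I would use a shadowing argument: $\phi$ is nonexpanding on $B_1(0,0)$, and the proof of Theorem~\ref{MainAttractorTheorem} gives $M_n:=\sup\{\dist(p,J(\phi)):p\in\phi^n(B_1(0,0))\}\to0$, so, given $\delta>0$, fix $N$ with $M_N<\delta$ and choose $z\in J(\phi)$ with $\norm{\phi^N(x,y)-z}<\delta$; then $\norm{\phi^{N+j}(x,y)-\phi^j(z)}<\delta$ for all $j\ge0$ by nonexpansion. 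Uniform continuity of $f$ on the compact set $B_1(0,0)$ then makes $\bigl|f(\phi^{N+j}(x,y))-f(\phi^j(z))\bigr|$ small uniformly in $j$, so the Birkhoff averages of $f$ along the orbit of $(x,y)$ and along that of $z$ have the same limit (the first $N$ terms contribute $O(N/n)\to0$); letting $\delta\to0$ gives $\frac1n\sum_{i=1}^nf(\phi^i(x,y))\to\int f\,d\mu_\phi$, as required.
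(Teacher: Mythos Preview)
Your approach to {\bf (a)} is essentially the paper's: a renormalization scheme in which one shows that a suitable class of maps is closed under the operation $g\mapsto\delta^{-1}\circ g^3\circ\delta$ (zoom in and cube). The paper's initial conjugation differs slightly from yours --- it uses $h(x,y)=(x+1,x+y+1)$ rather than a pure translate-and-dilate --- but this is immaterial. The device the paper uses for the closure step you flag as the ``main obstacle'' is to work modulo the ideal $I=(9,3x,3y,x^2,xy,y^2)$ of $\ZZ_3[x,y]$: the class $\Ccal$ consists of polynomial self-maps of $\ZZ_3^2$ congruent to $(x,y)\mapsto(3+x,3c)\pmod{I}$ for some $c\in\ZZ_3$, and a short computation (write $\psi_k(x,y)=(3+x+9a_k+F,\,3c_k+9b_k+G)$ with $F,G\in I$ having vanishing constant term, and observe that $\tfrac13F(3x,3y+3c_k)\in I$, likewise for $G$) shows this congruence survives renormalization. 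No conditions on $g^{-1}$ are needed.

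For {\bf (b)} and {\bf (c)} your route is genuinely different from, and more conceptual than, the paper's. The paper never names the odometer: it argues uncountability, Haar-measure zero, absence of periodic points, and density of orbits directly from the combinatorics of the nested $3^{k-1}$-cycles of balls; it constructs $\mu_\phi$ as a weak limit (via Prokhorov) of the uniform measures on one-point-per-periodic-ball sets $X_k$, and then proves equidistribution of an arbitrary orbit in $B_1(0,0)$ by hand, partitioning a long partial orbit into a preperiodic tail, complete passes through the level-$k$ cycle, and a short remainder. Your identification of $\phi|_{J(\phi)}$ with the $3$-adic odometer and appeal to its minimality and unique ergodicity dispatch {\bf (b)} and the on-attractor case of {\bf (c)} in one stroke; the shadowing argument (nonexpansion plus $M_n\to0$) to extend equidistribution to all of $B_1(0,0)$ is correct and cleaner than the paper's partition argument. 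What the paper's approach buys is self-containment (no appeal to the ergodic theory of odometers); what yours buys is a transparent explanation of \emph{why} the dynamics on $J(\phi)$ are what they are.
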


Since the automorphism $h(x,y)=(x+1,x+y+1)$ fixes $B_1(0,0)$ and permutes the balls of any given radius in $B_1(0,0)$, we may replace $\phi$ with its conjugate 
$$
\psi(x,y) = h^{-1}\circ\phi\circ h(x,y)=(a+1-2x+3y-x^2,-a-1+3x-3y+x^2)
$$
and replace the ball $B_{1/3}(1,1)$ which is fixed by $\phi$, with $B_{1/3}(0,0)$ which is fixed by $\psi$.  

\begin{lem}\label{psikLemma}
For each $k\geq1$, there exists $c_k\in \ZZ_3$ and a polynomial map $\psi_k:B_1(0,0)\to B_1(0,0)$ which has coefficients in $\ZZ_3$ and has the following properties.
\begin{itemize}
	\item $\psi_1=\psi$ and $c_1=-1$.
	\item $\psi_{k+1}=h_k^{-1}\circ\psi_k^3\circ h_k$ for all $k\geq1$, where $h_{k}(x,y)=(3x,3y+3c_k)$.  
	\item $\psi_k$ fixes $B_{1/3}(0,0)$, has a $3$-cycle $\{B_{1/9}(0,-3c_k),B_{1/9}(3,-3c_k),B_{1/9}(6,-3c_k)\}$ of balls of radius $1/9$, and all other balls of radius $1/9$ in $B_{1/3}(0,0)$ are strictly preperiodic to this cycle.
\end{itemize}
\end{lem}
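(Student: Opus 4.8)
The plan is to prove the lemma by induction on $k$. Besides the three bulleted properties, the inductive hypothesis should carry a little more: the behaviour of $\psi_k$ one scale finer, namely that inside its periodic ball $B_{1/9}(0,-3c_k)$ the map $\psi_k^3$ maps the nine sub-balls of radius $1/27$ among themselves as a single $3$-cycle together with six balls feeding into it in one step, and that this $3$-cycle moves nontrivially in the first coordinate. This strengthening is essentially forced, because the recursion $\psi_{k+1}=h_k^{-1}\circ\psi_k^3\circ h_k$ says precisely that ``$\psi_{k+1}$ is $\psi_k$ seen one scale deeper near the fixed ball,'' so the hypothesis must be self-reproducing; the fine structure asserted in Theorem~\ref{StrangeAttractorThm}(a) will then drop out by unwinding the recursion.

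For the base case I would first note that $a\equiv 2\pmod 9$ gives $|a|_3\le 1$, while $|3|_3<1$, so $(a,3)\in\Hcal_\II^+$ and $\phi_{a,3}$ has coefficients in $\ZZ_3$; conjugating by the $\ZZ_3$-automorphism $h$, whose inverse is also integral, shows $\psi=\psi_1$ is a $\ZZ_3$-coefficient self-map of $B_1(0,0)$. The remaining claims about $\psi_1$ are finite checks: reducing the two defining polynomials of $\psi$ modulo $9$ (using $a\equiv 2\pmod 9$) one obtains the self-map induced on the nine radius-$1/9$ balls of $B_{1/3}(0,0)$ and verifies directly that it is one $3$-cycle through the balls with first coordinate $0,3,6$ and six strictly preperiodic balls; this pins down $c_1=-1$, and reducing one further digit supplies the extra radius-$1/27$ data.

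In the inductive step the formal parts are routine. Because $c_k\in\ZZ_3$, the contraction $h_k(x,y)=(3x,3y+3c_k)$ carries $B_1(0,0)$ bijectively onto $B_{1/3}(0,0)$, and $\psi_k$ fixes $B_{1/3}(0,0)$, so $\psi_{k+1}$ is a well-defined self-map of $B_1(0,0)$. To see it still has coefficients in $\ZZ_3$, write $\psi_k^3\circ h_k=(P,Q)$: the substitution $(x,y)\mapsto(3x,3y+3c_k)$ multiplies each nonconstant monomial of $\psi_k^3$ by a positive power of $3$, so $P$ and $Q$ have all nonconstant coefficients in $3\ZZ_3$, and their constant terms are the coordinates of $\psi_k^3(0,3c_k)\in B_{1/3}(0,0)$, hence also in $3\ZZ_3$; dividing by $3$, as $h_k^{-1}$ does, stays in $\ZZ_3$. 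Finally $\psi_{k+1}$ fixes $B_{1/3}(0,0)$ because $h_k$ identifies the sub-ball $B_{1/3}(0,0)$ of the domain with the periodic ball of $\psi_k$ lying on the first axis, which is $\psi_k^3$-invariant.

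The real work is the $3$-cycle claim for $\psi_{k+1}$. Its action on the radius-$1/9$ balls in $B_{1/3}(0,0)$ is, via $h_k$, the action of $\psi_k^3$ on the radius-$1/27$ balls inside the $\psi_k$-periodic ball — exactly the extra data in the hypothesis, so once the set-up is right it should propagate almost formally. To make it propagate I would linearize: since $\psi_k$ is nonexpanding with integral coefficients, its action on radius-$1/27$ balls near the periodic orbit is affine over $\FF_3$, of the form $(i,j)\mapsto M(i,j)+w$ with $M$ the reduction mod $3$ of the Jacobian of $\psi_k^3$ along the $3$-cycle (the quadratic remainder, of size $\le 1/81$, does not affect which ball is hit); the chain rule gives $\det D\psi_{k+1}=(\det D\psi_k)^3$ and $\det D\phi_{a,3}=-3$, so $M$ is singular. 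The pattern ``one $3$-cycle plus six one-step-preperiodic balls'' is then equivalent to $M$ having rank exactly $1$, eigenvalue $1$ on its image, and nonzero translation along its image, and $c_{k+1}$ is read off from where the cycle sits. The main obstacle is showing that this rank/eigenvalue/translation package survives the cubing-and-rescaling: a product of rank-$1$ matrices over $\FF_3$ can collapse to rank $0$, and a vanishing translation would turn the $3$-cycle into three fixed balls, so this is not a mere dimension count. Here the strengthened hypothesis earns its keep, since $M$ for $\psi_{k+1}$ is the product of the three Jacobian factors of $\psi_k$ taken along its own $3$-cycle, and the ``moves nontrivially in the first coordinate'' clause is what forces $w\neq0$. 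Carrying this linear-algebra bookkeeping through cleanly is the delicate heart of the argument; the base case shows the package starts in the required shape, and the induction propagates it.
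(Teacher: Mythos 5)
Your proposal takes a genuinely different route from the paper, and it has a real gap at exactly the point you yourself flag as ``the delicate heart of the argument.''

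The paper's proof does not split the data into ``Jacobian mod $3$'' plus ``translation'' plus a finer-scale clause.  Instead it tracks a single algebraic congruence, $\psi_k(x,y)\equiv(3+x,3c_k)\pmod{I}$ where $I=(9,3x,3y,x^2,xy,y^2)\subset\ZZ_3[x,y]$, and verifies by a short polynomial computation that this congruence reproduces itself under $\psi_{k+1}=h_k^{-1}\circ\psi_k^3\circ h_k$ with $c_{k+1}$ read off from the second coordinate.  This one congruence simultaneously encodes the linear part mod~$3$ and the constant mod~$9$, so the question of whether the ``package'' survives composition never arises as a separate problem; it is a single ring-theoretic invariant.  It also depends on $a$ only modulo $9$, which matches the hypothesis $a\in D_{1/9}(2)$ exactly.

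In your plan, the Jacobian bookkeeping is actually not where the difficulty lies.  Since the inductive invariant forces the linear coefficients of $\psi_k$ to be $\equiv\mathrm{diag}(1,0)\pmod 3$ and the nonlinear terms contribute $0\pmod 3$ at points of $B_{1/3}(0,0)$, the mod-$3$ Jacobian is not merely ``some rank-one matrix'' but always the fixed idempotent $\begin{pmatrix}1&0\\0&0\end{pmatrix}$; products of it cannot collapse, and there is no eigenvalue to track.  The genuine problem is the translation.  To certify the scale-$1/27$ three-cycle inside the $\psi_{k+1}$-periodic $1/9$-ball you need to know $\psi_{k+1}^3(p)_1-p_1$ modulo $1/27$ at the periodic center $p$, and nothing in your strengthened hypothesis supplies this.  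Your hypothesis records the scale-$1/27$ behaviour of $\psi_k$, which under $h_k$ becomes only the scale-$1/9$ behaviour of $\psi_{k+1}$; the scale-$1/27$ behaviour of $\psi_{k+1}$ corresponds to scale $1/81$ for $\psi_k$, one level beyond what you are carrying.  The ``moves nontrivially in the first coordinate'' clause, after conjugation, gives nonvanishing of the coarse translation of $\psi_{k+1}$ (scale $1/9$), not of the fine translation (scale $1/27$), so it does not close the induction.  You would either have to add yet another scale to the hypothesis (and then face the same shortfall again), or replace the rank/translation bookkeeping by an invariant that is manifestly self-reproducing — which is precisely what the mod-$I$ congruence accomplishes.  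A smaller point: your base case, as stated, would need the scale-$1/27$ structure of $\psi_1$, which is not determined by $a\pmod 9$ alone; you would have to check the three residues of $a\pmod{27}$ separately or argue uniformity, whereas the paper's invariant is visibly independent of $a\pmod{27}$.
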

\begin{proof}
It will be useful to define the ideal $I=(9,3x,3y,x^2,xy,y^2)$ of $\ZZ_3[x,y]$.  Thus a polynomial $A +B x+C y+\dots\in\ZZ_3[x,y]$ is an element of $I$ if and only if $9\mid A$, $3\mid B$, and $3\mid C$ in $\ZZ_3$.  Given a polynomial map $f:B_{1/3}(0,0)\to B_{1/3}(0,0)$ having coefficients in $\ZZ_p$, the action of $f$ on the nine balls of radius $1/9$ contained in $B_{1/3}(0,0)$ depends only on the congruence class of $f$ modulo $I$; this observation will simplify the calculations.

In addition to the properties in the statement of the Lemma, we will also show that
\begin{equation}\label{PsikCongruence}
\psi_k(x,y)\equiv (3+x,3c_k) \pmod{I}
\end{equation}
for all $k\geq1$.  With $\psi_1=\psi$ and $c_1=-1$, then using the assumption that $a\equiv2\pmod{9}$, we have $\psi_1(x,y)\equiv(3+x,-3)\pmod{I}$, which is $(\ref{PsikCongruence})$, and from which it follows that $\psi_1$ has a $3$-cycle $\{B_{1/9}(0,6)$, $B_{1/9}(3,6)$, $B_{1/9}(6,6)\}$, and that all of the other six balls of radius $\frac{1}{9}$ in $B_{1/3}(0,0)$ are strictly preperiodic into this cycle.  

Assuming $\psi_k$ and $c_k$ have already been constructed as in the statement of the Lemma and satisfying $(\ref{PsikCongruence})$, set $\psi_{k+1}=h_k^{-1}\circ\psi_k^3\circ h_k$, 
where 
\begin{equation}\label{hkDef}
h_{k}(x,y)=(3x,3y+3c_k).
\end{equation}
Using $(\ref{PsikCongruence})$ and the definition of $I$ we may write 
$$
\psi_k(x,y)=(3+x+9a_k+F(x,y),3c_k+9b_k+G(x,y))
$$ 
where $a_k,b_k\in\ZZ_3$, and both $F$ and $G$ are in $I$ and have vanishing constant term.  We calculate
$$
\textstyle h_k^{-1}\circ\psi_k\circ h_k(x,y)=(1+3a_k+x+\frac{1}{3}F(3x,3y+3c_k),3b_k+\frac{1}{3}G(3x,3y+3c_k)).
$$
Since both $F$ and $G$ have vanishing constant term and are in $I$, it follows that both $\frac{1}{3}F(3x,3y+3c_k)$ and $\frac{1}{3}G(3x,3y+3c_k)$ are in $I$, and we deduce
$$
\textstyle h_k^{-1}\circ\psi_k\circ h_k(x,y)\equiv(1+3a_k+x,3b_k)\pmod{I}.
$$
Finally, iterating we arrive at
$$
\psi_{k+1}(x,y)=h_k^{-1}\circ\psi_k^3\circ h_k(x,y)\equiv(3+x,3b_k)\pmod{I},
$$
and thus we set $c_{k+1}=b_k$, establishing the desired congruence $(\ref{PsikCongruence})$.  The final sentence of the Lemma follows easily from the congruence $(\ref{PsikCongruence})$.
\end{proof}

\begin{proof}[Proof of Theorem~\ref{StrangeAttractorThm} {\bf (a)}]  As explained above, it suffices to prove the analogous statement for $\psi$ instead of $\phi$.  Using $\psi(x,y)\equiv(-2x-x^2,x^2)\pmod{3}$, it is elementary to check that $B_{1/3}(0,0)$ is fixed by $\psi$ and all of the other eight balls of radius $\frac{1}{3}$ in $B_1(0,0)$ are strictly preperiodic into this fixed ball.

We proceed by induction on $k$.  The $k=1$ case follows from the Lemma~\ref{psikLemma} and the fact that $\psi_1=\psi$.  Fix $k\geq1$, and assume that $\psi$ has a cycle of balls of radius $1/3^{k+1}$ in $B_{1/3}(0,0)$ of minimal period $3^k$, that all other balls of radius $1/3^{k+1}$ in $B_{1/3}(0,0)$ are strictly preperiodic into this cycle, and that each periodic ball of radius $1/3^{k}$ contains exactly three periodic balls of radius $1/3^{k+1}$.  

Note that $\psi_{k+1}=H_k^{-1}\circ\psi^{3^{k}}\circ H_k$ where $H_k=h_1\circ h_2\circ\dots \circ h_k$.  By $(\ref{hkDef})$, the automorphism $H_k$ takes discs of radius $1/3^r$ to discs of radius $1/3^{r+k}$.  It then follows from the final sentence of Lemma~\ref{psikLemma} that $\psi^{3^{k}}$ has a $3$-cycle 
\begin{equation}\label{AttractorThreeCycle}
\{B_{1/3^{k+2}}(x_0,y_0),\psi^{3^{k}}(B_{1/3^{k+2}}(x_0,y_0)),\psi^{3^{k}2}(B_{1/3^{k+2}}(x_0,y_0))\}
\end{equation}
of balls of radius $1/3^{k+2}$, all of which are contained in the ball $B_{1/3^{k+1}}(x_0,y_0)$ which is fixed by $\psi^{3^k}$, and that $B_{1/3^{k+1}}(x_0,y_0)$ contains no periodic balls of radius $1/3^{k+2}$ except the three balls in $(\ref{AttractorThreeCycle})$.  In particular, 
\begin{equation}\label{AttractorCycle1}
\{B_{1/3^{k+2}}(x_0,y_0),\psi(B_{1/3^{k+2}}(x_0,y_0)),\dots,\psi^{3^{k+1}-1}(B_{1/3^{k+2}}(x_0,y_0))\}
\end{equation}
is a $3^{k+1}$-cycle for $\psi$.  By the induction hypothesis, it follows that
\begin{equation}\label{AttractorCycle2}
\{B_{1/3^{k+1}}(x_0,y_0),\psi(B_{1/3^{k+1}}(x_0,y_0)),\dots,\psi^{3^{k}-1}(B_{1/3^{k+1}}(x_0,y_0))\}
\end{equation}
is a $3^k$-cycle for $\psi$, and the only periodic balls of radius $1/3^{k+1}$ are in the cycle $(\ref{AttractorCycle2})$.

Suppose that $B_{1/3^{k+2}}(x_0',y_0')$ is a periodic ball of radius $1/3^{k+2}$ not occurring in the cycle $(\ref{AttractorCycle1})$; then the cycle containing $B_{1/3^{k+2}}(x_0',y_0')$ is disjoint from the cycle $(\ref{AttractorCycle1})$.  Since the larger ball $B_{1/3^{k+1}}(x_0',y_0')$ is periodic, it is equal to one of the balls in $(\ref{AttractorCycle2})$, whereby some ball in the cycle of $B_{1/3^{k+2}}(x_0',y_0')$ would be contained in the ball $B_{1/3^{k+1}}(x_0,y_0)$ but would not be equal to one of the three balls in $(\ref{AttractorThreeCycle})$, a contradiction.  So $(\ref{AttractorCycle1})$ are the only periodic balls of radius $1/3^{k+2}$.

Finally, we observe that each ball $\psi^{r}(B_{1/3^{k+1}}(x_0,y_0))$ in $(\ref{AttractorCycle2})$ contains the three balls $\psi^{r}(B_{1/3^{k+2}}(x_0,y_0))$, $\psi^{r+3^k}(B_{1/3^{k+2}}(x_0,y_0))$, and $\psi^{r+3^k2}(B_{1/3^{k+2}}(x_0,y_0))$ from the cycle $(\ref{AttractorCycle1})$.
\end{proof}

\begin{proof}[Proof of Theorem~\ref{StrangeAttractorThm} {\bf (b)}]  For each $k\geq1$ define $\Per_k$ to be the set of $\phi$-periodic balls in $B_1(0,0)$ of radius $1/3^k$.  By Theorem~\ref{AttractorPeriodic}, 
$$
J(\phi) = \bigcap_{k\geq1}\bigcup_{\Bcal\in\Per_k}\Bcal.
$$
Each ball in $\Per_k$ contains exactly three balls in $\Per_{k+1}$, so arbitrarily indexing each such triple of balls using the set $\{1,2,3\}$, we see that $J(\phi)$ is in bijective correspondence with $\{1,2,3\}^\NN$, and hence is uncountable.  Since $|\Per_k|=3^{k-1}$ and a ball of radius $1/3^k$ in $\QQ_3^2$ has Haar measure $1/3^{2k}$, we see that for each $k\geq1$ the Haar measure of $J(\phi)$ is at most $(3^{k-1})(1/3^{2k})=3^{-k-1}\to0$ as $k\to+\infty$, and therefore the Haar measure of $J(\phi)$ is zero.

If $J(\phi)$ contains a periodic point $(x_0,y_0)$, then Lemma~\ref{PeriodicBallsLemma} implies that its minimal period is at least the minimal period of $B_{1/3^k}(x_0,y_0)$ for all $k\geq1$.  But every ball in $\Per_k$ has minimal period $3^{k-1}$, a contradiction as $k\to+\infty$.

If $(x_0,y_0)$ and $(x_0',y_0')$ are two points in $J(\phi)$ and $B_{1/3^k}(x_0',y_0')$ is a neighborhood of $(x_0',y_0')$, then some point in the forward orbit  orbit of $(x_0,y_0)$ lies in $B_{1/3^k}(x_0',y_0')$, because both $B_{1/3^k}(x_0,y_0)$ and $B_{1/3^k}(x_0',y_0')$ are balls in the same $3^{k-1}$-cycle.  This shows that the forward orbit of $(x_0,y_0)$ is dense in $J(\phi)$.  In particular this implies the indecomposability of $J(\phi)$, for if $J(\phi)=\Acal_1\cup\Acal_2$ for disjoint attractors $\Acal_1$ and $\Acal_2$, then as $\Acal_1$ is $\phi$-invariant, the forward orbit of any point of $\Acal_1$ cannot be dense in $J(\phi)$.
\end{proof}

\begin{proof}[Proof of Theorem~\ref{StrangeAttractorThm} {\bf (c)}]  We now construct the measure $\mu_\phi$.  For each nonempty finite subset $X$ of $B_1(0,0)$, denote by $[X]=\frac{1}{|X|}\sum_{(x,y)\in X}\delta_{(x,y)}$, the probability measure supported equally on each point of $X$.  Here $\delta_{(x,y)}$ denotes the Dirac measure supported at a point $(x,y)\in B_1(0,0)$.  

For each $k\geq1$, let $X_k$ be a subset of $B_1(0,0)$ consisting of precisely one point from each $\phi$-periodic ball $\Bcal\in\Per_k$.  Thus $|X_k|=3^{k-1}$ by part {\bf (a)}.  Given a continuous function $f:B_1(0,0)\to\RR$, we will show that the limit 
\begin{equation}\label{DistPerLimit}
L_f=\lim_{k\to+\infty}\int f d[X_k]
\end{equation}
exists.  Fix $\epsilon>0$, and let $k\geq1$ be an integer so large that $|f(x',y')-f(x,y)|\leq\epsilon$ whenever $\|(x',y')-(x,y)\|\leq1/3^k$;  such uniform continuity is guaranteed by the compactness of $B_1(0,0)$.  If $k'\geq k$, then by part {\bf (a)}, $X_{k'}$ consists of $3^{k'-1}$ points, precisely $3^{k'-k}$ of which occur in each ball in $\Per_k$.  Of course, $X_k$ contains precisely one point in each ball $\Bcal$ in $\Per_k$; call this point $(x_\Bcal,y_\Bcal)$.  We then have
\begin{equation}\label{DistPerEstimate}
\begin{split}
\bigg|\int f d[X_{k'}]-\int f d[X_k]\bigg| & = \bigg|\frac{1}{3^{k'-1}}\bigg(\sum_{(x',y')\in X_{k'}}f(x',y')-3^{k'-k}\sum_{(x,y)\in X_{k}}f(x,y)\bigg)\bigg| \\
	& \leq \frac{1}{3^{k'-1}}\sum_{\Bcal\in\Per_k}\sum_{(x',y')\in X_{k'}\cap\Bcal}|f(x',y')-f(x_\Bcal,y_\Bcal)| \\
	& \leq\epsilon.
\end{split}
\end{equation}
We conclude that the sequence $\{\int f d[X_k]\}$ is Cauchy and hence the limit $(\ref{DistPerLimit})$ exists.

By Prokhorov's theorem (\cite{MR1700749} Thm 5.1), the sequence $\{[X_k]\}$ of measures has a subsequence $\{[X_{k_\ell}]\}$ converging weakly to some probability measure $\mu_\phi$.  Since the limit $(\ref{DistPerLimit})$ exists for each continuous $f:B_1(0,0)\to\RR$, we must therefore have $L_f=\int fd\mu_\phi$, showing that in fact $[X_k]\to\mu_\phi$ weakly.

Finally, we show that the forward orbit of any point $(x_0,y_0)\in B_1(0,0)$ is equidistributed with respect to $\mu_\phi$.  In other words, for each integer $n\geq1$, let 
$$
Y_n=\{\phi(x_0,y_0), \phi^2(x_0,y_0), \dots, \phi^n(x_0,y_0)\}
$$
be the first $n$ points in the forward orbit of $(x_0,y_0)$.  We will show that $[Y_n]\to\mu_\phi$ weakly as $n\to+\infty$.

We first remark that $(x_0,y_0)$ is not periodic, because we have shown that the attractor $J(\phi)$ contains no periodic points, and that $B_1(0,0)$ is a basin of attraction for this attractor.    Fix $\epsilon>0$ and a continuous function $f:B_1(0,0)\to\RR$.  Let $k$ be an integer so large that $|f(x',y')-f(x,y)|\leq\epsilon$ whenever $\|(x',y')-(x,y)\|\leq1/3^k$.  Since there are only finitely many balls in $B_1(0,0)$ of radius $1/3^k$, there exists an integer $n_0\geq1$ for which $\Bcal=B_{1/3^k}(\phi^{n_0}(x_0,y_0))$ is a $\phi$-periodic ball (hence with minimal $\phi$-period $3^{k-1}$).  

Set $m=3^{k-1}$.  For each integer $n\geq n_0$, we may partition the partial orbit $Y_n$ as
\begin{equation}\label{DistPerPartition}
Y_n = T\cup C_1\cup C_2\cup \dots \cup C_r\cup C_{r+1}^*,
\end{equation}
where
$$
T=\{\phi(x_0,y_0), \phi^2(x_0,y_0), \dots, \phi^{n_0-1}(x_0,y_0)\},
$$
each set $C_i$ is a segment of the partial orbit $Y_n$ consisting of precisely one point from each ball in the $\phi$-cycle $\{\Bcal, \phi(\Bcal), \dots, \phi^{m-1}(\Bcal)\}$, and $C_{r+1}^*$ consists of the final $m^*$ points of $Y_n$, where $0\leq m^*< m$.  Thus $C_{r+1}^*$ is either the empty set, or it consists of precisely one point from each ball in the incomplete cycle $\{\Bcal, \phi(\Bcal), \dots, \phi^{m^*-1}(\Bcal)\}$.  

Each set $C_i$ (for $1\leq i\leq r$) may be taken as the set $X_k$ in $(\ref{DistPerEstimate})$, and taking $k'\to+\infty$ in that estimate and using the weak convergence $[X_{k'}]\to\mu_\phi$, we have deduced $|\int f d[C_i]-L_f|\leq\epsilon$.  By the partition $(\ref{DistPerPartition})$ and the definition of the probability measures $[X]$, we have
\begin{equation}\label{DistPerPartition2}
[Y_n] = \frac{n_0-1}{n}[T]+ \sum_{i=1}^{r}\frac{m}{n}[C_i]+\frac{m^*}{n}[C_{r+1}^*].
\end{equation}
Therefore 
\begin{equation*}\label{DistPerEstimate2}
\int fd[Y_n]-L_f = \int (f-L_f)d[Y_n] =I_1+I_2
\end{equation*}
where 
\begin{equation*}\label{DistPerEstimate3}
I_1=\frac{m}{n}\sum_{i=1}^{r}\bigg(\int fd[C_i]-L_f\bigg)
\end{equation*}
and 
\begin{equation*}\label{DistPerEstimate4}
I_2=\frac{n_0-1}{n}\int(f-L_f)d[T] + \frac{m^*}{n}\int(f-L_f)d[C_{r+1}^*].
\end{equation*}
Since the sets $C_1,\dots, C_r$ each contain $m$ points, we have $rm\leq n$, and therefore $|I_1|\leq\frac{rm}{n}\epsilon\leq\epsilon$.  Since $f$ is bounded, and since both $n_0$ and $m^*<m=3^{k-1}$ are bounded independently of $n$, taking $n\to+\infty$ we deduce
$$
\limsup_{n\to+\infty}\bigg|\int fd[Y_n]-L_f\bigg|\leq\epsilon.
$$
As $\epsilon$ was arbitrary, we conclude $\lim_{n\to+\infty}|\int fd[Y_n]-L_f|=0$, and as $f$ was arbitrary and $L_f=\int fd\mu_\phi$, we have established the weak convergence $[Y_n]\to\mu_\phi$.
\end{proof}

\begin{thm}\label{Attracting2CycleThm}
For $a\in D_{1/3}(1)$, define $\phi=\phi_{a,3}:\QQ_3^2\to\QQ_3^2$ by $\phi(x,y)=(a+3y-x^2,x)$.  Then $J(\phi)$ is an attracting $2$-cycle.
\end{thm}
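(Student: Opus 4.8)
The plan is to exhibit a genuine $2$-cycle of $\phi$ in $\QQ_3^2$ and then to show that the attractor produced by Theorem~\ref{MainAttractorTheorem} can contain nothing else. Write $b=3$. Since $a\in D_{1/3}(1)$ we have $|a|_3=1$ and $|b|_3<1$, so $(a,b)\in\Hcal_\II^+$, and by Proposition~\ref{JuliaSetBoundedNonempty}~{\bf (a)} together with Theorem~\ref{MainAttractorTheorem} the set $J(\phi)=\bigcap_{n\geq 0}\phi^n(B_1(0,0))$ is an attractor for $\phi$ whose basin contains $B_1(0,0)$. It therefore suffices to prove that this nested intersection equals a $2$-cycle, for then the terminology of $\S$~\ref{StrageAttDefSect} applies directly.

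First I would locate the $2$-cycle via Proposition~\ref{fp criteria}~{\bf (b)}. With $b=3$ one has $4a-3(b-1)^2=4(a-3)$; since $a\equiv 1\pmod 3$ the unit $a-3\equiv 1\pmod 3$ is a square in $\QQ_3$, so $4(a-3)$ is a nonzero square. Hence $\phi$ has a unique $2$-cycle $\{(\beta_1,\beta_2),(\beta_2,\beta_1)\}\subseteq\QQ_3^2$, where $\beta_1,\beta_2=-1\pm\sqrt{a-3}$; taking the square root with residue $1$ gives $\beta_1\equiv 0$ and $\beta_2\equiv 1\pmod 3$, and both points lie in $\ZZ_3^2\subseteq B_1(0,0)$. (One also checks that $(b-1)^2+4a=4(1+a)$ is a non-square, as $1+a\equiv 2\pmod 3$, so $\phi$ has no fixed point in $\QQ_3^2$; this is reassuring but not logically needed.)

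Next I would separate the two relevant scales. Modulo $3$ the map $\phi$ becomes $(x,y)\mapsto(1-x^2,x)$ on $\FF_3^2$, and a short finite computation shows its $n$-th image equals $\{(1,0),(0,1)\}=\{(\overline{\beta_2},\overline{\beta_1}),(\overline{\beta_1},\overline{\beta_2})\}$ for every $n\geq 2$, with the two classes interchanged. Consequently $\phi^2(B_1(0,0))\subseteq C:=B_{1/3}(\beta_1,\beta_2)\cup B_{1/3}(\beta_2,\beta_1)$, and since $\phi$ has coefficients in $\ZZ_3$ it acts on $C$ through its reduction, so $\phi(C)\subseteq C$ with the two balls of $C$ swapped. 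The one genuine computation is the local contraction estimate at the cycle: the Jacobian of $\phi$ at $(x,y)$ has rows $(-2x,3)$ and $(1,0)$, so from $\beta_1\equiv 0$, $\beta_2\equiv 1\pmod 3$ one finds $D\phi^2(\beta_1,\beta_2)\equiv 0\pmod 3$ while $D\phi^2(\beta_2,\beta_1)$ is nilpotent mod $3$; squaring gives $D\phi^4\equiv 0\pmod 3$ at each point $p$ of the $2$-cycle. As $\phi^4$ is a polynomial map with $\ZZ_3$-coefficients fixing $p\in\ZZ_3^2$, expanding it around $p$ (linear part in $3\ZZ_3^2$, higher-order terms of norm $\leq\|v\|^2$ on $\|v\|\leq 1$) yields $\phi^4(B_{1/3^k}(p))\subseteq B_{1/3^{k+1}}(p)$ for all $k\geq 1$, whence $\bigcap_{m\geq 0}\phi^{4m}(B_{1/3}(p))=\{p\}$ by compactness.

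To finish, assemble the pieces. Because $\phi(C)\subseteq C$ the sequence $\phi^n(C)$ is nested decreasing; and because $\phi^4$ preserves each of the two balls of $C$, $\bigcap_{n\geq 0}\phi^n(C)=\bigcap_{m\geq 0}\phi^{4m}(C)$ equals the union of $\bigcap_m\phi^{4m}(B_{1/3}(\beta_1,\beta_2))$ and $\bigcap_m\phi^{4m}(B_{1/3}(\beta_2,\beta_1))$, i.e.\ $\{(\beta_1,\beta_2),(\beta_2,\beta_1)\}$, using that an intersection of unions of two nested decreasing sequences of sets equals the union of the two intersections. Since $\phi(B_1(0,0))\subseteq B_1(0,0)$ and $\phi^2(B_1(0,0))\subseteq C$, this gives $J(\phi)=\bigcap_{n\geq 0}\phi^n(B_1(0,0))=\bigcap_{n\geq 2}\phi^n(B_1(0,0))\subseteq\bigcap_{k\geq 0}\phi^k(C)=\{(\beta_1,\beta_2),(\beta_2,\beta_1)\}$, and the reverse inclusion is immediate because periodic points always lie in $J(\phi)$. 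Hence $J(\phi)$ equals the $2$-cycle $\{(\beta_1,\beta_2),(\beta_2,\beta_1)\}$ and is an attractor, i.e.\ an attracting $2$-cycle. The only steps needing real care are the Jacobian-and-Taylor estimate giving the factor-$3$ contraction of $\phi^4$ near the cycle, and the elementary set-theoretic bookkeeping with the nested images of $C$; everything else is immediate from results already established.
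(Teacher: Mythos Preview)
Your proof is correct and follows essentially the same strategy as the paper: locate the $2$-cycle via Proposition~\ref{fp criteria}, check modulo $3$ that $\phi^2(B_1(0,0))$ lands in the two balls $B_{1/3}(0,1)\cup B_{1/3}(1,0)$, and then use a linearization estimate at the cycle to show these balls contract to the two periodic points. The only noteworthy difference is that the paper computes the conjugated map $\psi=\phi^2(\cdot+c,\cdot+d)-(c,d)$ directly and observes its linear part vanishes mod~$3$, giving factor-$1/3$ contraction of $\phi^2$ on $B_{1/3}(c,d)$, and then handles $B_{1/3}(d,c)$ by nonexpansion of $\phi$; you instead observe that $D\phi^2$ is zero mod~$3$ at one point and merely nilpotent at the other, and pass to $\phi^4$ to get uniform contraction at both points. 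Both routes work; the paper's is marginally sharper (contraction already at the level of $\phi^2$), yours is slightly more symmetric in its treatment of the two balls.
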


\begin{proof}
Since $|a|_3=1$ and $|3|_3<1$, $(a,3)$ is in region $\Hcal_\II^+$, and so $J(\phi)$ is an attractor.  It follows from Proposition~\ref{fp criteria} that $\phi$ has a $2$-cycle $\{(c,d),(d,c)\}$, where $c \in D_{1/3}(0)$ and $d  \in D_{1/3}(1)$ are the two roots of $x^2+2x+(4-a)=0$.  These roots exist by Hensel's lemma, as $x^2+2x+(4-a)\equiv x(x-1)\pmod{3}$.  Our goal is to show that $J(\phi)=\{(c,d),(d,c)\}$.  In other words, $J(\phi)$ is an attracting $2$-cycle.

By elementary calculations modulo $3$, each of the nine balls of radius $1/3$ in $B_1(0,0)$ is mapped into one of the two balls $B_{1/3}(0,1)=B_{1/3}(c,d)$ or $B_{1/3}(1,0)=B_{1/3}(d,c)$ after two iterations of $\phi$.  In other words
\begin{equation}\label{GetRidOfTails}
\phi^{2}(B_1(0,0))\subseteq B_{1/3}(c,d)\cup B_{1/3}(d,c).
\end{equation}
Since $(c,d)$ is a fixed point of $\phi^2$, we may write 
$$
\psi(x,y):=\phi^2(x+c,y+d)-(c,d)=(A_1x+B_1y+F_1(x,y),A_2x+B_2y+F_2(x,y)),
$$
where the $F_i\in\ZZ_3[x,y]$ have vanishing constant and linear part.  A straightforward calculation shows that $|A_i|_3\leq1/3$ and $|B_i|_3\leq1/3$, from which it follows that $\|\psi(x,y)\|\leq\frac{1}{3}\|(x,y)\|$ for $(x,y)\in B_{1/3}(0,0)$.  Hence
\begin{equation}\label{Attracting2Cycle1}
\textstyle\|\phi^2(x,y)-(c,d)\| \leq\frac{1}{3} \|(x,y)-(c,d)\| \text{ whenever } (x,y)\in B_{1/3}(c,d),
\end{equation}
and iterating this last inequality gives
\begin{equation}\label{Attracting2Cycle2}
\phi^{2n}(B_{1/3}(c,d))\subseteq B_{1/3^{n+1}}(c,d)
\end{equation}
for all $n\geq0$.  Using $(\ref{Attracting2Cycle2})$ we also have, for $n\geq1$,
\begin{equation}\label{Attracting2Cycle5}
\begin{split}
\phi^{2n}(B_{1/3}(d,c)) & = \phi^{2n-1}(\phi(B_{1/3}(d,c))) \\
	& \subseteq \phi^{2n-1}(B_{1/3}(c,d)) \\
	& = \phi(\phi^{2(n-1)}(B_{1/3}(c,d))) \\
	& \subseteq \phi(B_{1/3^n}(c,d)) \\	
	& \subseteq B_{1/3^n}(d,c).	
\end{split}
\end{equation}
using $\phi(c,d)=(d,c)$ and that $\phi$ is nonexpanding.  It now follows from $(\ref{JuliaIntersection2})$, $(\ref{GetRidOfTails})$, $(\ref{Attracting2Cycle2})$, and $(\ref{Attracting2Cycle5})$ that
\begin{equation*}
\begin{split}
J(\phi) & \subseteq \bigcap_{n\geq1} \phi^{2n+2}(B_{1}(0,0)) \\
	& \subseteq \bigcap_{n\geq1} \phi^{2n}(B_{1/3}(c,d)\cup B_{1/3}(d,c)) \\
	& = \bigg(\bigcap_{n\geq1} \phi^{2n}(B_{1/3}(c,d))\bigg)\cup\bigg(\bigcap_{n\geq1} \phi^{2n}(B_{1/3}(d,c))\bigg) \\
	& \subseteq \bigg(\bigcap_{n\geq1} B_{1/3^{n+1}}(c,d)\bigg)\cup\bigg(\bigcap_{n\geq1} B_{1/3^{n}}(d,c)\bigg) \\
	& = \{(c,d),(d,c)\}.
\end{split}
\end{equation*}
Thus $J(\phi)=\{(c,d),(d,c)\}$, since periodic points are always elements of $J(\phi)$.
\end{proof}

\subsection{Further speculation on the attractor $J(\phi_{a,b})$}\label{NumericalCalc}  We now record some numerical calculations which are suggestive of further examples similar to Theorem~\ref{StrangeAttractorThm} and Theorem~\ref{Attracting2CycleThm}.  In the following table, $p$ is an odd prime, $(a,b)$ is a point in region $\Hcal_\II^+$ of the parameter space over $\QQ_p$, $\phi_{a,b}:\QQ_p^2\to\QQ_p^2$ is the corresponding H\'enon map, and $P_k$ denotes the largest minimal period among all balls of radius $1/p^k$ in $B_1(0,0)$.  Recall from Theorem~\ref{UnboundedPk} that the attractor $J(\phi_{a,b})$ is an infinite set if and only if the sequence $\{P_k\}$ is unbounded.  An entry $m^*$ indicates that we have only verified that $P_k\geq m$.

\begin{table}[h!]
\centering
\begin{tabular}{cc|cccccc}
$p$ & $\phi_{a,b}$ & $P_1$ & $P_2$ & $P_3$ & $P_4$ & $P_5$ & $P_{6}$  \\
\hline
$3$ & $\phi_{2,3}$ & $1$ & $3$ & $9$ & $27$ & $81$ & $243$  \\
$3$ & $\phi_{8,3}$ & $1$ & $1$ & $3$ & $9$ & $27$ & $81$  \\
$3$ & $\phi_{2,9}$ & $1$ & $1$ & $3$ & $9$ & $27$ & $81$  \\
$5$ & $\phi_{4,5}$ & $3$ & $3$ & $3$ & $3$ & $3$ & $3$ \\
$5$ & $\phi_{1,5}$ & $2$ & $5$ & $25$ & $125$ & $625^*$ & $3125^*$  \\
$7$ & $\phi_{1,7}$ & $2$ & $2$ & $2$ & $2$ & $2$ & $2$  \\
$7$ & $\phi_{2,7}$ & $1$ & $6$ & $42$ & $294^*$ & $2058^*$ & $14406^*$  \\
\end{tabular} 
\bigskip
\caption{The maximal cycle lengths $P_k$ of balls of radius $1/p^k$, for $1\leq k\leq 6$ and various examples of $p$-adic H\'enon maps.}
\end{table}

For reference, the first line of this table refers to a map $\phi_{2,3}$ which is included in Theorem~\ref{StrangeAttractorThm}.  Again let $\Per_k$ denote the set of periodic balls of radius $1/p^k$ in $B_1(0,0)$.  There are some notable differences in the cycle structures of $\Per_k$ for the three $3$-adic maps occurring in this table.  As we know from Theorem~\ref{StrangeAttractorThm}, for $\phi_{2,3}$, $\Per_k$ is a single $3^{k-1}$-cycle for all $k\geq1$.  For the map $\phi_{8,3}$, our calculations show that $\Per_1$ contains one fixed point; $\Per_2$ contains three fixed points; $\Per_3$ contains one $3$-cycle and six fixed points; $\Per_4$ contains one $9$-cycle, four $3$-cycles, and six fixed points; and $\Per_5$ contains one $27$-cycle, four $9$-cycles, four $3$-cycles, and six fixed points.  For the map $\phi_{2,9}$, our calculations show that $\Per_1$ contains one fixed point; $\Per_2$ contains three fixed points; $\Per_3$ contains three $3$-cycles; $\Per_4$ contains three $9$-cycles; and $\Per_5$ contains three $27$-cycles.

Over $\QQ_5$, the map $\phi_{4,5}:\QQ_5^2\to\QQ_5^2$ has the property that $\Per_k$ is a single $3$-cycle for all $1\leq k\leq 6$.  Thus $J(\phi_{4,5})$ is likely an attracting $3$-cycle.  The map $\phi_{1,5}:\QQ_5^2\to\QQ_5^2$ has the property that $\Per_k$ contains a single $2$-cycle and a single $5^{k-1}$-cycle for all $1\leq k\leq 4$.  If this pattern continues, then $J(\phi_{1,5})$ is the union of an attracting $2$-cycle and an indecomposable attractor similar to the $3$-adic attractor described in Theorem~\ref{StrangeAttractorThm}.

Over $\QQ_7$, the table suggests that $J(\phi_{1,7})$ is an attracting $2$-cycle.  It would be straightforward to give a proof of this with an adaptation of the proof of Theorem~\ref{Attracting2CycleThm}.  The example $\phi_{2,7}$ is notable in that $\Per_k$ contains large cycle lengths that are not powers of $7$; it appears that all cycles lengths in $\Per_k$ are of the form $1$, $3\cdot 7^\ell$, or $6\cdot 7^\ell$ for $\ell\geq0$.


\section{Region $\Hcal_{\III}$: a non-Archimedean horseshoe map}\label{RegionIII}

\subsection{Overview}  Let $\{\pm\}^\ZZ$ be the set of bisequences
$$
s=(s_k)=(\dots s_{-3}s_{-2}s_{-1}.s_0s_1s_2s_3\dots)
$$
in the two symbols $+$ and $-$.  The set $\{\pm\}^\ZZ$ is naturally a compact topological space, endowed with the metric $d(s,s')=e^{-\min\{|k|\mid s_k\neq s'_k\}}$.  The {\em shift map} on $\{\pm\}^\ZZ$ is the homeomorphism $\sigma:\{\pm\}^\ZZ\to\{\pm\}^\ZZ$ defined by the rule $\sigma(s)_{k}=s_{k+1}$; in other words
$$
\sigma(\dots s_{-3}s_{-2}s_{-1}.s_0s_1s_2s_3\dots)=(\dots s_{-3}s_{-2}s_{-1}s_0.s_1s_2s_3\dots).
$$

The purpose of this section is to prove that, when $(a,b)$ is in the region $\Hcal_\III$ of the parameter space $\Hcal$ and $a$ is a square in $K$, the dynamical system obtained by restricting the H\'enon map $\phi_{a,b}:K^2\to K^2$ to its filled Julia set $J(\phi_{a,b})$ is topologically conjugate to the shift map $\sigma:\{\pm\}^\ZZ\to\{\pm\}^\ZZ$.  More precisely, there exists a homeomorphism $\omega:\{\pm\}^\ZZ\to J(\phi_{a,b})$ such that $\omega\circ\sigma=\phi_{a,b}\circ\omega$.

\begin{equation*}\label{TopConjDiagram}
\begin{CD}
\{\pm\}^\ZZ  @> \sigma >>   \{\pm\}^\ZZ \\ 
@V \omega VV                                    @VV \omega V \\ 
J(\phi_{a,b}) @> \phi_{a,b} >> J(\phi_{a,b})
\end{CD}
\end{equation*}


\subsection{Assumptions and definitions}  Throughout $\S$~\ref{RegionIII}, $\phi=\phi_{a,b}$ denotes a H\'enon map for $(a,b)$ in the region $\Hcal_\III$ of the parameter space $\Hcal$.  Thus $|a|>\max(1,|b|^2)$, and so in the notation of $\S$~\ref{FiltrationSect} we have $R=|a|^{1/2}$ and $S_R=B_{|a|^{1/2}}(0,0)$.  Since we know from Theorem~\ref{JuliaSetBoundedNonempty} part {\bf (d)} that $J(\phi_{a,b})$ is empty when $a$ is not a square in $K$, we also assume that $a$ is a square in $K$.  

To summarize, the following notation and assumptions are in effect throughout $\S$~\ref{RegionIII}:
\begin{itemize}
	\item $a,b\in K$, $b\neq0$, and $\phi=\phi_{a,b}:K^2\to K^2$ is defined as in $(\ref{HenonDef})$
	\item $a=\gamma^2$ for some $\gamma\in K$
	\item $|\gamma|>1$ and $|\gamma|>|b|$
	\item $I=\{x\in K\mid |x|\leq |\gamma|\}$
	\item $S=I\times I = \{(x,y)\in K^2\mid \|(x,y)\|\leq |\gamma|\}$
\end{itemize}


\subsection{Curves and tubes in $S$}   Given a function $f:I\to I$ and some $\delta>0$, define the sets  
\begin{equation*}
\begin{split}
V(f) & =\{(f(t),t)\in S\mid t\in I\} \\
V_\delta(f) & =\{(f(t)+\theta,t)\in S\mid t\in I, |\theta|\leq \delta\} \\
\end{split}
\end{equation*}
We call $V(f)$ the {\em vertical curve} in $S$ associated to $f$, and $V_\delta(f)$ the {\em vertical tube} of radius $\delta$ in $S$ associated to $f$.  Similarly, we define
\begin{equation*}
\begin{split}
H(f) & =\{(t,f(t))\in S\mid t\in I\} \\
H_\delta(f) & =\{(t,f(t)+\theta)\in S\mid t\in I, |\theta|\leq \delta\},
\end{split}
\end{equation*}
the {\em horizontal curve} in $S$ associated to $f$, and the {\em  horizontal tube} of radius $\delta$ in $S$ associated to $f$.

Given a set $D\subseteq K$, we say a function $f:D\to K$ is {\em $C$-Lipschitz} if $|f(t)-f(t')|\leq C|t-t'|$ for all $t,t'\in D$ and some constant $C>0$.  

The following lemma is a non-Archimedean analogue of \cite{MR1829194} Ch. 2 Lemma 2.

\begin{lem}\label{CurvesAndTubesLem} 
Let $f:I\to I$ be a $C_f$-Lipschitz function, let $g:I\to I$ be a $C_g$-Lipschitz function, and assume that $C_fC_g<1$.  Then $V(f)\cap H(g)$ contains exactly one point.
\end{lem}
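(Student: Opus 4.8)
The plan is to reduce the geometric statement about $V(f)\cap H(g)$ to a fixed-point statement for the composition $f\circ g\colon I\to I$, and then to exploit that this composition is a strict contraction of the complete ultrametric space $I$.

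First I would unwind the definitions. A point $(x,y)$ lies in $V(f)\cap H(g)$ precisely when there exist parameters $s,t\in I$ with $(f(t),t)=(x,y)=(s,g(s))$; comparing coordinates, this is equivalent to $y=t$, $x=s$, $x=f(y)$, and $y=g(x)$. Thus $V(f)\cap H(g)$ is in bijective correspondence with the set of pairs $(x,y)\in I\times I$ satisfying $x=f(y)$ and $y=g(x)$, and the assignment $x\mapsto(x,g(x))$ identifies this set with the fixed-point set of $f\circ g$ in $I$: if $x=f(g(x))$ then $(x,g(x))$ is such a pair (and it lies in $S$ since $f(I)\subseteq I$ and $g(I)\subseteq I$), while conversely any such pair has $x=f(y)=f(g(x))$. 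So it suffices to show $f\circ g$ has exactly one fixed point in $I$.

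Next I would observe that $f\circ g$ maps $I$ into $I$ and is $C_fC_g$-Lipschitz, with $C_fC_g<1$ by hypothesis. Since $K$ is complete and $I=\{x\in K\mid |x|\le|\gamma|\}$ is a closed ball, $I$ is a nonempty complete metric space, so the Banach fixed point theorem gives a unique fixed point $x_0\in I$. Alternatively, and more in keeping with the non-Archimedean setting, one may start from any $x\in I$ and check directly that the iterates $(f\circ g)^n(x)$ form a Cauchy sequence — the ultrametric inequality together with the geometric decay $|(f\circ g)^{n+1}(x)-(f\circ g)^n(x)|\le (C_fC_g)^n|(f\circ g)(x)-x|$ makes this immediate — whose limit is a fixed point, while the strict bound $|x_0-x_0'|\le C_fC_g|x_0-x_0'|$ forces any two fixed points to coincide. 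Setting $y_0=g(x_0)\in I$, the pair $(x_0,y_0)$ is then the unique solution of $x=f(y)$, $y=g(x)$, so $V(f)\cap H(g)=\{(x_0,y_0)\}$.

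There is essentially no serious obstacle here; the only points requiring care are the bookkeeping in the coordinate identification between $V(f)\cap H(g)$ and the fixed-point set of $f\circ g$ (checking the correspondence is genuinely bijective in both directions, and that all relevant points lie in $S$), and the observation that $I$ is complete so that the contraction argument applies. The sole place the hypothesis $C_fC_g<1$ enters is the contraction estimate for $f\circ g$, which is also the entire mathematical content of the lemma.
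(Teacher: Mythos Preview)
Your proof is correct and follows essentially the same approach as the paper: reduce the intersection $V(f)\cap H(g)$ to the fixed-point set of the contraction $f\circ g$ on the complete space $I$, then invoke the Banach fixed point theorem. The only cosmetic difference is that the paper treats $f\circ g$ and $g\circ f$ symmetrically (obtaining $x_0$ and $y_0$ as their respective fixed points and then checking $f(y_0)=x_0$, $g(x_0)=y_0$), whereas you work with $f\circ g$ alone and define $y_0=g(x_0)$ directly; your version is slightly more economical.
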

\begin{proof}
The functions $f\circ g:I\to I$ and $g\circ f:I\to I$ are $C_fC_g$-Lipschitz, and so by our assumption that $C_fC_g<1$, it follows from the Banach fixed point theorem that they have unique fixed points, say $f(g(x_0))=x_0$ and $g(f(y_0))=y_0$, respectively.  Applying $f$ to the first equation we have $f(g(f(y_0)))=f(y_0)$ showing that $f(y_0)=x_0$ by uniqueness of $x_0$; similarly $y_0=g(x_0)$.  Since $(x_0,y_0)=(f(y_0),y_0)=(x_0,g(x_0))$, it is clear that $(x_0,y_0)$ is in both $V(f)$ and $H(g)$.  If $(x,y)$ is an arbitrary point in $V(f)\cap H(g)$, then $y=g(x)$ and $x=f(y)$, and hence $x$ is a fixed point of $f\circ g$ and $y$ is a fixed point of $g\circ f$.  It follows that $x=x_0$ and $y=y_0$, confirming uniqueness.
\end{proof}

\subsection{Horseshoe dynamics in $S$}   

The following lemma illustrates the characteristic ``horseshoe'' property of the H\'enon map; it states that the inverse image of a vertical tube meets $S$ at two thinner vertical tubes.  Following this lemma we deduce the analogous statement for horizontal tubes.

\begin{lem}\label{TwoVertTubesLem}
Let $f:I\to I$ be a $|b/\gamma|$-Lipschitz function and let $0<\rho\leq|\gamma|$.  There exist two $|b/\gamma|$-Lipschitz functions $f^\pm:I\to I$ such that $f^\pm(t)\in D_{|\gamma|}^\circ(\pm\gamma)$ for all $t\in I$ and
\begin{equation*}
\phi^{-1}(V_\rho(f))\cap S=V_{\rho/|\gamma|}(f^+)\cup V_{\rho/|\gamma|}(f^-).
\end{equation*}
\end{lem}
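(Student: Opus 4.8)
The plan is to unwind the defining condition into an inequality in the first coordinate, pin down the finitely many ``branches'' along which it can be satisfied, and then recognize each branch as a vertical tube. Since $\phi(x,y)=(a+by-x^2,x)$, a point $(x,y)\in S$ lies in $\phi^{-1}(V_\rho(f))$ exactly when $|a+by-x^2-f(x)|\le\rho$ (the requirement $x\in I$ being automatic from $(x,y)\in S$). So, abbreviating $G_t(x)=a+bt-x^2-f(x)=\gamma^2-x^2+\bigl(bt-f(x)\bigr)$, the task is to describe the set $\{(x,t)\in I\times I:\ |G_t(x)|\le\rho\}$.

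First I would localize the first coordinate. Using $|a|=|\gamma|^2$, $|b|<|\gamma|$, $|f(x)|\le|\gamma|$, and $\rho\le|\gamma|<|\gamma|^2$, the inequality $|G_t(x)|\le\rho$ forces $|x|=|\gamma|$, for otherwise $|x^2|<|a|$ and then $|G_t(x)|=|a|=|\gamma|^2>\rho$. Now, as the residue characteristic is odd, $|2\gamma|=|\gamma|$; since $2\gamma=(x+\gamma)-(x-\gamma)$ and $|x\pm\gamma|\le|\gamma|$, we get $\max(|x-\gamma|,|x+\gamma|)=|\gamma|$, and if both equalled $|\gamma|$ then $|\gamma^2-x^2|=|x-\gamma|\,|x+\gamma|=|\gamma|^2$ would dominate $bt-f(x)$ and again give $|G_t(x)|=|\gamma|^2>\rho$. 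Hence exactly one of $x\in D_{|\gamma|}^\circ(\gamma)$ or $x\in D_{|\gamma|}^\circ(-\gamma)$ holds, and these two discs are disjoint (otherwise $|2\gamma|<|\gamma|$). This splits $\phi^{-1}(V_\rho(f))\cap S$ into a disjoint ``$+$'' piece and ``$-$'' piece; it suffices to handle the $+$ piece, the $-$ piece being identical with $\gamma$ replaced by $-\gamma$.

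The key computation is an exact scaling identity: for $x,x'\in D_{|\gamma|}^\circ(\gamma)$ and every $t$,
$$|G_t(x)-G_t(x')|=\bigl|(x-x')(x+x')+\bigl(f(x)-f(x')\bigr)\bigr|=|\gamma|\,|x-x'|,$$
since $|x+x'|=|2\gamma|=|\gamma|$ (because $|x+x'-2\gamma|<|\gamma|=|2\gamma|$) while $|f(x)-f(x')|\le|b/\gamma|\,|x-x'|<|\gamma|\,|x-x'|$, so the quadratic term strictly dominates. Given this, for each $t\in I$ I would produce $f^+(t)\in D_{|\gamma|}^\circ(\gamma)$ as the unique solution of $G_t(x)=0$: rewriting that equation as $x=\gamma+\bigl(bt-f(x)\bigr)/(x+\gamma)$, one checks — exactly as in the Banach fixed point argument in the proof of Lemma~\ref{CurvesAndTubesLem} — that the right-hand side, as a function of $x$, maps the complete disc $D_{|\gamma|}^\circ(\gamma)$ into itself (using $\max(|b|,1)<|\gamma|$) and is a contraction there with constant $\max(|b|,1)/|\gamma|<1$. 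By the scaling identity, $\{x\in D_{|\gamma|}^\circ(\gamma):\ |G_t(x)|\le\rho\}=\{x:\ |\gamma|\,|x-f^+(t)|\le\rho\}=D_{\rho/|\gamma|}(f^+(t))$, and since $\rho/|\gamma|\le1<|\gamma|$ and $|f^+(t)-\gamma|<|\gamma|$ this disc lies in $D_{|\gamma|}^\circ(\gamma)$; hence the $+$ piece is precisely $V_{\rho/|\gamma|}(f^+)$, all of whose points automatically lie in $S$ since $|f^+(t)+\theta|\le|\gamma|$. Comparing $G_t(f^+(t))=0=G_{t'}(f^+(t'))$ with the identity $G_t-G_{t'}=b(t-t')$ and the scaling identity yields $|\gamma|\,|f^+(t)-f^+(t')|=|b|\,|t-t'|$, so $f^+$ is $|b/\gamma|$-Lipschitz (indeed an exact $|b/\gamma|$-dilation), and $f^+$ maps $I$ into $I$ because $D_{|\gamma|}^\circ(\gamma)\subseteq I$. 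Running the construction near $-\gamma$ yields $f^-$, and the union of the two pieces is the asserted decomposition.

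I expect the main obstacle to be the localization step: extracting from the single inequality $|G_t(x)|\le\rho$ that $x$ must lie in one of two disjoint discs of radius $|\gamma|$ is precisely where the hypotheses $|\gamma|>1$, $|\gamma|>|b|$, and odd residue characteristic ($|2|=1$) are all used, and the case bookkeeping is easy to get slightly wrong. Once that localization is secured, the exact scaling identity of the third step makes the tube structure, the inclusion $f^\pm(t)\in D_{|\gamma|}^\circ(\pm\gamma)$, and the Lipschitz bound for $f^\pm$ essentially formal, the only remaining input being the routine contraction-mapping construction of the zero $f^\pm(t)$.
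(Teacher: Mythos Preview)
Your argument is correct and follows the same overall arc as the paper's proof---localize $x$ to neighborhoods of $\pm\gamma$, use a contraction to produce $f^\pm(t)$ solving $a+bt-x^2-f(x)=0$, then verify the tube equality and the Lipschitz bound---but your organization is genuinely tidier in one respect. The paper constructs $f^\pm(t)$ via a Newton-type iteration $T_t^\pm(x)=x\pm\frac{1}{2\gamma}(a+bt-x^2-f(x))$ on the disc $D_r(\pm\gamma)$ with $r=\max(1,|b|)$, and then proves the tube equality $\phi^{-1}(V_\rho(f))\cap S=V_{\rho/|\gamma|}(f^+)\cup V_{\rho/|\gamma|}(f^-)$ by checking both inclusions through direct case analysis on whether $|f^\pm(y)+x|=|\gamma|$. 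You instead isolate the exact scaling identity $|G_t(x)-G_t(x')|=|\gamma|\,|x-x'|$ on each branch and use it as a single tool: it makes $G_t$ a bijection (up to scaling) onto its image, so the sublevel set $\{|G_t|\le\rho\}$ is immediately a disc of radius $\rho/|\gamma|$ about the zero, and combining it with $G_t-G_{t'}=b(t-t')$ gives the Lipschitz constant without further computation. Your fixed-point map $x\mapsto\gamma+(bt-f(x))/(x+\gamma)$ is also a perfectly good alternative to the paper's iteration. The tradeoff is that the paper's explicit iteration on the smaller disc $D_r(\pm\gamma)$ pins down $|f^\pm(t)\mp\gamma|\le\max(1,|b|)$ a bit more sharply, though this extra precision is not used later.
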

\begin{proof}
Let $r=\max(1,|b|)<|\gamma|$ and fix $t\in I$.  Define two $(r/|\gamma|)$-Lipschitz functions
\begin{equation*}
\begin{split}
& T_{t}^\pm:D_{r}(\pm\gamma)\to D_{r}(\pm\gamma) \\
& T_{t}^\pm(x) = x\pm\frac{1}{2\gamma}(a+bt-x^2-f(x)).
\end{split}
\end{equation*}
If $x=\gamma+\theta$ with $|\theta|\leq r$, then $|T_{t}^+(x)-\gamma|=|\frac{-\theta^2+bt-f(\gamma+\theta)}{2\gamma}|\leq r$, verifying that $T_{t}^+(D_{r}(\gamma))\subseteq D_{r}(\gamma)$.  To check the Lipschitz condition, note that for distinct $x_1,x_2\in D_{r}(\gamma)$ we have
$$
\bigg|\frac{T_{t}^+(x_1)-T_{t}^+(x_2)}{x_1-x_2}\bigg|=\frac{1}{|\gamma|}\bigg|2\gamma-(x_1+x_2)-\frac{f(x_1)-f(x_2)}{x_1-x_2}\bigg|\leq\frac{r}{|\gamma|}
$$
using the estimate $|2\gamma-(x_1+x_2)|\leq\max(|\gamma-x_1|,|\gamma-x_2|)\leq r$, and the assumption that $f$ is $|b/\gamma|$-Lipschitz and $|\frac{b}{\gamma}|<r$.  Similar calculations show that $T_{t}^-(D_{r}(-\gamma))\subseteq D_{r}(-\gamma)$ and that $T_{t}^-$ is $(r/|\gamma|)$-Lipschitz.

Since $T_{t}^+$ is contracting, it has a unique fixed point in $D_{r}(\gamma)$ by the Banach fixed-point theorem; call this point $f^+(t)$, and similarly define $f^-(t)\in D_{r}(\gamma)$ to be the unique fixed point of $T_{t}^-$.  We conclude that $f^\pm(t)\in D_{|\gamma|}^\circ(\pm\gamma)$ and 
\begin{equation}\label{FixedPointIdentity}
a+bt-f^\pm(t)^2-f(f^\pm(t))=0.
\end{equation}
We then have
\begin{equation}\label{TwoTubesIdentity}
\begin{split}
\phi^{-1}(V_\rho(f))\cap S & = \{(x,y)\in S\mid \phi(x,y)\in V_\rho(f)\} \\
	& = \{(x,y)\in S\mid |a+by-x^2-f(x)|\leq\rho\} \\
	& = V_{\rho/|\gamma|}(f^+)\cup V_{\rho/|\gamma|}(f^-).
\end{split}
\end{equation}

To see the last equality in $(\ref{TwoTubesIdentity})$, note that using $(\ref{FixedPointIdentity})$ we have
\begin{equation}\label{TwoTubesIdentity2}
\begin{split}
|a+by-x^2-f(x)| & = |a+by-x^2-f(x)-(a+by-f^\pm(y)^2-f(f^\pm(y)))| \\
	& = |(f^\pm(y)+x)(f^\pm(y)-x)+f(f^\pm(y))-f(x)|.
\end{split}
\end{equation}
If $(x,y)\in V_{\rho/|\gamma|}(f^+)$ then $|x-f^+(y)|\leq \rho/|\gamma|$.  It follows using $(\ref{TwoTubesIdentity2})$ that 
$$
|a+by-x^2-f(x)|\leq\rho
$$ 
because $|f^+(y)+x|\leq|\gamma|$ and 
\begin{equation}\label{TwoTubesIdentity3}
|f(f^+(y))-f(x)|\leq|b/\gamma||f^+(y)-x|\leq \rho|b|/|\gamma|^2<\rho;
\end{equation}
similarly if $(x,y)\in V_{\rho/|\gamma|}(f^-)$. 

Conversely, if $(x,y)\not\in V_{\rho/|\gamma|}(f^+)\cup V_{\rho/|\gamma|}(f^-)$, then both $R^\pm:=|x-f^\pm(y)|>\rho/|\gamma|$.  We observe that one of the two identities $|f^+(y)+x|=|\gamma|$ or $|f^-(y)+x|=|\gamma|$ must hold.  Otherwise, both $|f^\pm(y)+x|<|\gamma|$ hold, which implies $|f^+(y)-f^-(y)|<|\gamma|$.  But then 
$$
|f^+(y)-f^-(y)|=|2\gamma+(f^+(y)-\gamma)-(f^-(y)+\gamma)|=|2\gamma|=|\gamma|,
$$ 
a contradiction.  If $|f^+(y)+x|=|\gamma|$, we deduce using $(\ref{TwoTubesIdentity2})$ and $(\ref{TwoTubesIdentity3})$ that
\begin{equation*}
|a+by-x^2-f(x)| = |\gamma|R^+>\rho,
\end{equation*}
and similarly if $|f^-(y)+x|=|\gamma|$.

It remains to show that $f^\pm$ are Lipschitz.  For distinct $t_1,t_2\in I$, set $u_i=f^\pm(t_i)$.  Thus $u_1,u_2\in D^\circ_{|\gamma|}(\pm\gamma)$, so $|u_1+u_2|=|\pm2\gamma|=|\gamma|$ and hence
$$
|f(u_1)-f(u_2)|/|u_1-u_2|\leq|b/\gamma|<1<|\gamma|.
$$
Using $(\ref{FixedPointIdentity})$ we deduce
\begin{equation*}
\frac{|b||t_1-t_2|}{|u_1-u_2|} = \bigg|u_1+u_2+\frac{f(u_1)-f(u_2)}{u_1-u_2}\bigg|=|\gamma|
\end{equation*}
and thus $|f^\pm(t_1)-f^\pm(t_2)|=|b/\gamma||t_1-t_2|$.
\end{proof}

\begin{lem}\label{TwoHorTubesLem}
Let $g:I\to I$ be a $|1/\gamma|$-Lipschitz function and let $0<\rho\leq|\gamma|$.  There exist two $|1/\gamma|$-Lipschitz functions $g^\pm:I\to I$ such that $g^\pm(t)\in D_{|\gamma|}^\circ(\pm\gamma)$ for all $t\in I$ and
\begin{equation}\label{TwoHorTubes}
\phi(H_\rho(g))\cap S=H_{\rho|b|/|\gamma|}(g^+)\cup H_{\rho|b|/|\gamma|}(g^-).
\end{equation}
\end{lem}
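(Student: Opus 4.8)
The plan is to deduce Lemma~\ref{TwoHorTubesLem} from Lemma~\ref{TwoVertTubesLem} by conjugating with $\lambda$ and using the involution $\iota$, in the same spirit that parts \textbf{(d)}--\textbf{(f)} of Proposition~\ref{Filtration} were deduced from parts \textbf{(a)}--\textbf{(c)}. Write $(a^*,b^*)=\iota(a,b)=(a/b^2,1/b)$ and $\gamma^*=\gamma/b$, so that $a^*=(\gamma^*)^2$; one checks $|\gamma^*|>1$ and $|\gamma^*|>|b^*|$, so all the standing assumptions of $\S$~\ref{RegionIII} hold for $\phi_{\iota(a,b)}$ with $\gamma$ replaced by $\gamma^*$. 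In particular, with $I^*=\{x\in K : |x|\le|\gamma^*|\}$ and $S^*=I^*\times I^*$, Lemma~\ref{FiltrationandInvolution} gives $\lambda^{-1}(S)=S^*$. Note also that $b^*/\gamma^*=1/\gamma$, so ``$|1/\gamma|$-Lipschitz'' is exactly the hypothesis of Lemma~\ref{TwoVertTubesLem} for the parameters $(a^*,b^*)$.

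First I would translate the horizontal tube through $\lambda^{-1}$. Using $\lambda^{-1}(u,v)=(-v/b,-u/b)$ and the substitution $s=-t/b$, a direct computation gives
$$
\lambda^{-1}(H_\rho(g)) = V_{\rho/|b|}(f), \qquad f(s):=-\tfrac{1}{b}\,g(-bs),
$$
and the factors of $b$ cancel in the Lipschitz estimate, so $f:I^*\to I^*$ is $|1/\gamma|$-Lipschitz. Since $0<\rho/|b|\le|\gamma^*|$, Lemma~\ref{TwoVertTubesLem} applied to $\phi_{\iota(a,b)}$ yields two $|1/\gamma|$-Lipschitz functions $f^\pm:I^*\to I^*$ with $f^\pm(t)\in D_{|\gamma^*|}^\circ(\pm\gamma^*)$ and, using $(\rho/|b|)/|\gamma^*|=\rho/|\gamma|$,
$$
\phi_{\iota(a,b)}^{-1}(V_{\rho/|b|}(f))\cap S^* = V_{\rho/|\gamma|}(f^+)\cup V_{\rho/|\gamma|}(f^-).
$$

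Next I would push this back through $\lambda$. From Proposition~\ref{conj winv} we have $\phi_{\iota(a,b)}^{-1}=\lambda^{-1}\circ\phi\circ\lambda$, while $\lambda(V_{\rho/|b|}(f))=H_\rho(g)$ by the previous step; hence $\phi_{\iota(a,b)}^{-1}(V_{\rho/|b|}(f))=\lambda^{-1}(\phi(H_\rho(g)))$, and intersecting with $S^*=\lambda^{-1}(S)$ and applying $\lambda$ gives
$$
\phi(H_\rho(g))\cap S = \lambda(V_{\rho/|\gamma|}(f^+))\cup\lambda(V_{\rho/|\gamma|}(f^-)).
$$
A computation symmetric to the first step (now with $\lambda(x,y)=(-by,-bx)$ and the substitution $\tau=-bs$) shows that for any $h:I^*\to I^*$ one has $\lambda(V_\delta(h))=H_{|b|\delta}(\widetilde h)$, where $\widetilde h(\tau):=-b\,h(-\tau/b)$; moreover $\widetilde h:I\to I$ is $|1/\gamma|$-Lipschitz, and if $h$ takes values in $D_{|\gamma^*|}^\circ(\pm\gamma^*)$ then $\widetilde h$ takes values in $D_{|\gamma|}^\circ(\mp\gamma)$ (the sign flips because $-b\gamma^*=-\gamma$). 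Taking $\delta=\rho/|\gamma|$, so $|b|\delta=\rho|b|/|\gamma|$, and setting $g^+:=\widetilde{f^-}$ and $g^-:=\widetilde{f^+}$ produces the two $|1/\gamma|$-Lipschitz functions with $g^\pm(t)\in D_{|\gamma|}^\circ(\pm\gamma)$ satisfying $(\ref{TwoHorTubes})$.

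The argument uses no analytic input beyond Lemma~\ref{TwoVertTubesLem} (hence, ultimately, the Banach fixed point theorem already invoked there); the only thing requiring care is the bookkeeping of radii and Lipschitz constants under $\lambda$, together with the sign flip $D_{|\gamma^*|}^\circ(\pm\gamma^*)\leftrightarrow D_{|\gamma|}^\circ(\mp\gamma)$ that forces the relabelling $g^\pm=\widetilde{f^\mp}$. (Alternatively, one could prove the lemma directly by repeating the fixed-point argument of Lemma~\ref{TwoVertTubesLem} with $\phi$ in place of $\phi^{-1}$, but the conjugation route avoids duplicating that work.)
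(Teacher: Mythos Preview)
Your proof is correct and follows essentially the same route as the paper: conjugate by $\lambda$, apply Lemma~\ref{TwoVertTubesLem} to $\phi_{\iota(a,b)}$ with the starred data, and push back through $\lambda$. The only cosmetic difference is that the paper takes $\gamma_*=-\gamma/b$ as its square root of $a^*$, which makes $-b\gamma_*=\gamma$ and thereby avoids the sign flip you track; with your choice $\gamma^*=\gamma/b$ the relabelling $g^\pm=\widetilde{f^\mp}$ is indeed necessary, and you handle it correctly.
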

\begin{proof}
We conjugate by the automorphism $\lambda(x,y)=(-by,-bx)$ and use the involution $\iota:\Hcal\to\Hcal$ described in Proposition~\ref{conj winv}.  Set $\gamma_*=-\gamma/b$, $I_*=\{x\in K\mid |x|\leq |\gamma_*|\}$, and $S_*=I_* \times I_*$.  Thus $\lambda(S_*)=S$, and $\lambda$ takes vertical tubes in $S_*$ to horizontal tubes in $S$.  More precisely, considering the $|b/\gamma_*|$-Lipschitz function $f_*:I_*\to I_*$ defined by $f_*(t)=-\frac{1}{b}g(-bt)$, we have
\begin{equation}\label{TwoHorTubes2}
\lambda(V_\delta(f_*))=H_{|b|\delta}(g).
\end{equation}

We now apply Lemma~\ref{TwoVertTubesLem} to $f_*$ and the H\'enon map $\phi_{\iota(a,b)}$, with $\rho_*=\rho/|b|$.  We obtain
$$
\phi_{\iota(a,b)}^{-1}(V_{\rho_*}(f_*))\cap S_*=V_{\rho_*/|\gamma_*|}(f_*^+)\cup V_{\rho_*/|\gamma_*|}(f_*^-)
$$
for $|b/\gamma_*|$-Lipschitz functions $f_*^\pm:I_*\to I_*$ such that $f^\pm_*(t)\in D_{|\gamma_*|}^\circ(\pm\gamma_*)$ for all $t\in I_*$.  Applying $\lambda$ to this identity we obtain
\begin{equation}\label{TwoHorTubes3}
(\lambda\circ\phi_{\iota(a,b)}^{-1}\circ\lambda^{-1})(\lambda(V_{\rho_*}(f_*)))\cap \lambda(S_*)=\lambda(V_{\rho_*/|\gamma_*|}(f_*^+))\cup \lambda(V_{\rho_*/|\gamma_*|}(f_*^-)).
\end{equation}
By Proposition~\ref{conj winv}, $\phi=\lambda\circ\phi_{\iota(a,b)}^{-1}\circ\lambda^{-1}$, thus the desired identity $(\ref{TwoHorTubes})$ follows from $(\ref{TwoHorTubes2})$ and $(\ref{TwoHorTubes3})$, with $g^\pm:I\to I$ defined by $g^\pm(t)=-bf^\pm_*(\frac{-t}{b})$.
\end{proof}

The next lemma describes the forward filled Julia set in $S$ as an uncountable union of vertical curves, indexed by the set of sequences in two symbols.  We then prove the analogous statement describing the backward filled Julia set in terms of horizontal curves.

\begin{lem}\label{ForwardTrajectoryLemma}
There exists a family of $|b/\gamma|$-Lipschitz functions $f^s:I\to I$, indexed by the set of all sequences $s=(s_0s_1s_2\dots)$, where each $s_i\in\{\pm\}$, and a pair $V^\pm$ of disjoint subsets of $S$ such that
\begin{equation}
V(f^{s})=\{(x,y)\in S\mid \phi^k(x,y)\in V^{s_k}\text{ for all }k\geq0\}.
\end{equation}
Moreover, $J^+(\phi)\cap S=\bigcup_{s=(s_0s_1s_2\dots)}V(f^s)$.
\end{lem}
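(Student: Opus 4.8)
The plan is to build each curve $V(f^s)$ as a nested intersection of vertical tubes produced by iterating Lemma~\ref{TwoVertTubesLem} once per symbol of $s$, and to recover the symbol sequence of a point in $J^+(\phi)\cap S$ from which of two fixed ``branches'' its successive forward iterates occupy. First I would isolate the two branches: since the constant function $0$ is $|b/\gamma|$-Lipschitz and $V_{|\gamma|}(0)=S$, applying Lemma~\ref{TwoVertTubesLem} with $f\equiv 0$ and $\rho=|\gamma|$ yields $|b/\gamma|$-Lipschitz functions $f_0^\pm:I\to I$ with $f_0^\pm(I)\subseteq D_{|\gamma|}^\circ(\pm\gamma)$ and $\phi^{-1}(S)\cap S=V_1(f_0^+)\sqcup V_1(f_0^-)=:V^+\sqcup V^-$. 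Because $|2|=1$ forces $|\gamma-(-\gamma)|=|\gamma|$, the open discs $D_{|\gamma|}^\circ(\gamma)$ and $D_{|\gamma|}^\circ(-\gamma)$ are disjoint, so $V^+$ and $V^-$ are disjoint; moreover any subset of $\phi^{-1}(S)\cap S$ all of whose points have first coordinate in $D_{|\gamma|}^\circ(\gamma)$ is automatically contained in $V^+$ (and likewise with signs reversed).

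Next I would attach, by induction on $n\ge 0$, to every finite word $w=(s_0s_1\cdots s_n)\in\{\pm\}^{n+1}$ a $|b/\gamma|$-Lipschitz function $f^w:I\to I$ with $f^w(I)\subseteq D_{|\gamma|}^\circ(s_0\gamma)$ such that
$$
V_{|\gamma|^{-n}}(f^w)=\{(x,y)\in S\mid \phi^k(x,y)\in V^{s_k}\text{ for }0\le k\le n\}.
$$
The base case $n=0$ is $f^{(s_0)}:=f_0^{s_0}$. For the inductive step with $n\ge 1$, the hypothesis applied to the tail $(s_1\cdots s_n)$ gives a tube $V_{|\gamma|^{-(n-1)}}(f^{(s_1\cdots s_n)})$; applying Lemma~\ref{TwoVertTubesLem} to its defining function (with $\rho=|\gamma|^{-(n-1)}$) splits $\phi^{-1}\bigl(V_{|\gamma|^{-(n-1)}}(f^{(s_1\cdots s_n)})\bigr)\cap S$ into two tubes $V_{|\gamma|^{-n}}(g^+)\sqcup V_{|\gamma|^{-n}}(g^-)$ with $g^\pm(I)\subseteq D_{|\gamma|}^\circ(\pm\gamma)$, and unwinding the hypothesis identifies this union with $\{(x,y)\in S\mid \phi^j(x,y)\in V^{s_j}\text{ for }1\le j\le n\}$ (the condition $\phi(x,y)\in S$ being automatic from $\phi(x,y)\in V^{s_1}\subseteq S$). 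That set is contained in $\phi^{-1}(S)\cap S=V^+\sqcup V^-$, so by the observation above $V_{|\gamma|^{-n}}(g^\pm)\subseteq V^\pm$, and hence intersecting with $V^{s_0}$ picks out exactly $V_{|\gamma|^{-n}}(g^{s_0})$; setting $f^w:=g^{s_0}$ gives the displayed identity for $w$.

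With the finite words handled, for an infinite sequence $s=(s_0s_1s_2\cdots)$ the tubes $V_{|\gamma|^{-n}}(f^{s|_n})$ are nested (by the displayed identity) with radii $\to 0$ since $|\gamma|>1$; slicing at a fixed $t\in I$ gives a nested sequence of closed discs shrinking to a single point $f^s(t)$, the function $f^s:I\to I$ is $|b/\gamma|$-Lipschitz as a pointwise limit of the $f^{s|_n}$, and $V(f^s)=\bigcap_n V_{|\gamma|^{-n}}(f^{s|_n})=\{(x,y)\in S\mid \phi^k(x,y)\in V^{s_k}\ \text{for all }k\ge 0\}$, which is the first assertion. For $J^+(\phi)\cap S=\bigcup_s V(f^s)$: the inclusion $\supseteq$ is immediate because a point of $V(f^s)$ has its whole forward orbit in $S=B_{|\gamma|}(0,0)$. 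For $\subseteq$, a point $(x,y)\in J^+(\phi)\cap S$ has bounded forward orbit, and Proposition~\ref{Filtration} parts {\bf (a)} and {\bf (b)} (a point of $S_R$ cannot map into $S_R^-$, and a point mapping into $S_R^+$ has $\|\phi^n\|\to\infty$) force $\phi^k(x,y)\in S$ for all $k\ge 0$; hence each $\phi^k(x,y)$ lies in $\phi^{-1}(S)\cap S=V^+\sqcup V^-$, and recording which branch it occupies defines a sequence $s$ with $(x,y)\in V(f^s)$.

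The step I expect to be the main obstacle is the bookkeeping in the inductive construction: one must check that ``selecting one of the two tubes output by Lemma~\ref{TwoVertTubesLem}'' coincides with ``intersecting with the fixed branch $V^{s_0}$''. This is exactly where the non-Archimedean disjointness of $D_{|\gamma|}^\circ(\gamma)$ and $D_{|\gamma|}^\circ(-\gamma)$ (equivalently $|2|=1$) is used, together with the fact that every tube arising in the construction lies over one or the other of these two discs and is contained in $\phi^{-1}(S)\cap S$, so its membership in $V^+$ versus $V^-$ is forced.
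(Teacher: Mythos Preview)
Your proposal is correct and follows essentially the same route as the paper: iterate Lemma~\ref{TwoVertTubesLem} starting from $f\equiv0$ to produce nested vertical tubes indexed by finite prefixes, prove the trajectory characterization $V_{|\gamma|^{-n}}(f^{s|_n})=\{(x,y)\in S\mid \phi^k(x,y)\in V^{s_k}\text{ for }0\le k\le n\}$ by induction, pass to the limit, and finish with the filtration. Your indexing by finite words is equivalent to the paper's indexing by pairs $(s,n)$, and you are in fact slightly more explicit than the paper about the bookkeeping step you flag as the main obstacle (why the $s_0$-branch output by Lemma~\ref{TwoVertTubesLem} coincides with the intersection with the fixed set $V^{s_0}$).
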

\begin{proof}
To ease notation set $\delta_n=1/|\gamma|^n$.  We first construct a family of $|b/\gamma|$-Lipschitz functions $f^s_{n}:I\to I$, indexed by sequences $s=(s_0s_1s_2\dots)$ in the two symbols $\{\pm\}$ and integers $n\geq0$.  When $n=0$, we apply Lemma~\ref{TwoVertTubesLem} with $f:I\to I$ equal to the identically zero function and $\rho=|\gamma|$; thus $V_{|\gamma|}(f)=S$, and we obtain $|b/\gamma|$-Lipschitz functions $f^\pm:I\to I$ with $f^\pm(t)\in D_{|\gamma|}^\circ(\pm\gamma)$.  Define $V^\pm=V_{1}(f^\pm)$ and $f^s_0=f^{s_0}$.  We then have
\begin{equation}\label{FirstTwoVerticalTubes}
\phi^{-1}(S)\cap S=V^+\cup V^-.
\end{equation}

Fix $n\geq0$ and assume that the functions $f^s_{n}:I\to I$ have been constructed for all sequences $s=(s_0s_1s_2\dots)$ in the two symbols $\{\pm\}$.  We apply Lemma~\ref{TwoVertTubesLem} with $f=f_n^{\sigma(s)}$ and $\rho=\delta_n=1/|\gamma|^{n}$, where $\sigma(s_0s_1s_2\dots)=(s_1s_2s_3\dots)$.  We obtain $|b/\gamma|$-Lipschitz functions $f^\pm:I\to I$ with $f^\pm(t)\in D_{|\gamma|}^\circ(\pm\gamma)$.  Set $f^s_{n+1}=f^{s_0}$.  We then have 
\begin{equation}\label{GeneralTwoVerticalTubes}
\phi^{-1}(V_{\delta_n}(f_n^{\sigma(s)}))\cap S=V_{\delta_{n+1}}(f^s_{n+1})\cup V_{\delta_{n+1}}(f^{s'}_{n+1})
\end{equation}
where $s'$ is obtained from $s=(s_0s_1s_2\dots)$ by changing $s_0$ from $\pm$ to $\mp$.

When $n\geq1$, using $(\ref{GeneralTwoVerticalTubes})$, we see that a point $(x,y)$ is in $V_{\delta_{n}}(f^s_{n})$ if and only if $(x,y)\in V^{s_0}$ and $\phi(x,y)\in V_{\delta_{n-1}}(f_{n-1}^{\sigma(s)})$.  It follows from this and a simple induction that 
\begin{equation}\label{TrajectorySetsAreTubes}
V_{\delta_{n}}(f^{s}_{n})=\{(x,y)\in S\mid \phi^k(x,y)\in V^{s_k}\text{ for all }0\leq k\leq n\}.
\end{equation}
In other words, $V_{\delta_{n}}(f^{s}_{n})$ is the set of points in $S$ whose partial forward orbit follows a particular trajectory through the two disjoint sets $V^\pm$.  

From $(\ref{TrajectorySetsAreTubes})$ it is clear that
$$
V_{\delta_{n+1}}(f^{s}_{n+1})\subseteq V_{\delta_{n}}(f^{s}_{n}),
$$
from which it follows that the limit $f^s(t) := \lim_{n\to+\infty}f^s_{n}(t)$ exists, and a standard argument shows that a limit of $|b/\gamma|$-Lipschitz functions is $|b/\gamma|$-Lipschitz.  We conclude using $(\ref{TrajectorySetsAreTubes})$ that
\begin{equation}\label{CompleteTrajectorySetsAreCurves}
\begin{split}
V(f^{s})=\bigcap_{n\geq0}V_{\delta_{n}}(f^{s}_{n})=\{(x,y)\in S\mid \phi^k(x,y)\in V^{s_k}\text{ for all }k\geq0\}.
\end{split}
\end{equation}

It follows from Proposition~\ref{Filtration} that $J^+(\phi)\cap S$ is the set of points in $S$ whose forward orbit is contained in $S$.  Using $(\ref{FirstTwoVerticalTubes})$ we have 
$$
J^+(\phi)\cap S\subseteq \phi^{-1}(S)\cap S=V^{+}\cup V^-.
$$
Thus every point in $J^+(\phi)\cap S$ has a forward orbit which follows some trajectory through the two sets $V^\pm$, and hence $J^+(\phi)\cap S\subseteq\bigcup_{s=(s_0s_1s_2\dots)}V(f^s)$ using $(\ref{CompleteTrajectorySetsAreCurves})$.  Conversely, $(\ref{CompleteTrajectorySetsAreCurves})$ also shows that any point in some $V(f^s)$ has forward orbit contained in $S$ and hence is in $J^+(\phi)\cap S$.
\end{proof}

\begin{lem}\label{BackwardTrajectoryLemma}
There exists a family of $|1/\gamma|$-Lipschitz functions $g^s:I\to I$, indexed by the set of all sequences $s=(\dots s_{-3}s_{-2}s_{-1})$, where each $s_i\in\{\pm\}$, and a pair $H^\pm$ of disjoint subsets of $S$ such that
\begin{equation}\label{CompleteHorTrajectorySetsAreCurves}
H(g^{s})=\{(x,y)\in S\mid \phi^{k+1}(x,y)\in H^{s_{k}}\text{ for all }k\leq -1\}.
\end{equation}
Moreover, $J^-(\phi)\cap S=\bigcup_{s=(\dots s_{-3}s_{-2}s_{-1})}H(g^s)$.
\end{lem}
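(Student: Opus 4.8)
The plan is to deduce this lemma from Lemma~\ref{ForwardTrajectoryLemma} applied to a conjugate H\'enon map, using the same device that passes from Lemma~\ref{TwoVertTubesLem} to Lemma~\ref{TwoHorTubesLem}. Set $(a^*,b^*)=\iota(a,b)=(a/b^2,1/b)$ and $\phi_*=\phi_{a^*,b^*}$. Since $\iota$ restricts to an involution of $\Hcal_\III$ by Proposition~\ref{conj winv}, we have $(a^*,b^*)\in\Hcal_\III$; moreover $a^*=(\gamma/b)^2$ is a square in $K$, and putting $\gamma_*=-\gamma/b$ (so that $|\gamma_*|=|\gamma|/|b|$) one checks from the standing assumptions $|\gamma|>1$ and $|\gamma|>|b|$ that $|\gamma_*|>1$ and $|\gamma_*|>|b^*|$. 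Hence all the standing hypotheses of $\S$~\ref{RegionIII} hold for $\phi_*$ with square root $\gamma_*$, and we may form $I_*=\{x\in K\mid |x|\leq|\gamma_*|\}$ and $S_*=I_*\times I_*$.

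First I would invoke Lemma~\ref{ForwardTrajectoryLemma} for $\phi_*$. This produces a pair $V_*^\pm$ of disjoint subsets of $S_*$ together with a family of $|1/\gamma|$-Lipschitz functions $f_*^{\sigma}:I_*\to I_*$ (note $|b^*/\gamma_*|=|1/\gamma|$), indexed by forward sequences $\sigma=(\sigma_0\sigma_1\sigma_2\dots)$ in $\{\pm\}$, such that $V(f_*^{\sigma})=\{p\in S_*\mid\phi_*^k(p)\in V_*^{\sigma_k}\text{ for all }k\geq0\}$ and $J^+(\phi_*)\cap S_*=\bigcup_{\sigma}V(f_*^{\sigma})$.

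Next I would transport this picture by the automorphism $\lambda(x,y)=(-by,-bx)$. As in Lemma~\ref{TwoHorTubesLem} one has $\lambda(S_*)=S$, and the substitution $u=-bt$ shows that if $g:I\to I$ is defined from a function $f:I_*\to I_*$ by $g(t)=-bf(-t/b)$, then $\lambda(V(f))=H(g)$, and $g$ is $|1/\gamma|$-Lipschitz whenever $f$ is. For a backward sequence $s=(\dots s_{-3}s_{-2}s_{-1})$ let $\sigma(s)$ be the forward sequence with $\sigma(s)_m=s_{-1-m}$; this reversal is a bijection from backward to forward sequences. Set $g^s(t)=-bf_*^{\sigma(s)}(-t/b)$ and $H^\pm=\lambda(V_*^\pm)$, the latter being disjoint because $\lambda$ is bijective. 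Using Proposition~\ref{conj winv} in the form $\lambda\circ\phi_*^k=\phi^{-k}\circ\lambda$, the condition ``$\phi_*^k(p)\in V_*^{\sigma(s)_k}$ for all $k\geq0$'' that defines membership of $\lambda(p)$ in $\lambda(V(f_*^{\sigma(s)}))=H(g^s)$ becomes ``$\phi^{-k}(\lambda(p))\in H^{\sigma(s)_k}$ for all $k\geq0$'', and the substitution $k=-(j+1)$ rewrites this (using $\sigma(s)_{-(j+1)}=s_j$) as ``$\phi^{j+1}(\lambda(p))\in H^{s_j}$ for all $j\leq-1$'', which is exactly $(\ref{CompleteHorTrajectorySetsAreCurves})$.

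Finally, Lemma~\ref{FilledJuliaAndInvolution} gives $J^+(\phi_*)=\lambda^{-1}(J^-(\phi))$, so $J^-(\phi)\cap S=\lambda\big(J^+(\phi_*)\cap S_*\big)=\bigcup_{\sigma}\lambda(V(f_*^{\sigma}))=\bigcup_{s}H(g^s)$, the last equality using that $s\mapsto\sigma(s)$ is a bijection. The only points needing genuine care are the bookkeeping of the index reversal $\sigma(s)_m=s_{-1-m}$ together with the exponent shift in $\phi$ (which is precisely what forces the ``$k+1$'' in the statement), and the preliminary verification that $\phi_*$ satisfies the hypotheses of $\S$~\ref{RegionIII}; there is no new analytic input beyond Lemma~\ref{ForwardTrajectoryLemma}.
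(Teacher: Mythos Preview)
Your proof is correct, but it takes a genuinely different route from the paper's. The paper proves Lemma~\ref{BackwardTrajectoryLemma} by rerunning the inductive construction of Lemma~\ref{ForwardTrajectoryLemma} in the horizontal direction: it sets $\epsilon_n=|b|^{n+1}/|\gamma|^n$, applies Lemma~\ref{TwoHorTubesLem} (rather than Lemma~\ref{TwoVertTubesLem}) iteratively to build approximating functions $g^s_n$, establishes the analogue of $(\ref{TrajectorySetsAreTubes})$ for horizontal tubes, and then passes to the limit. In other words, the conjugation by $\lambda$ is used only at the level of the one-step tube lemma (this is how Lemma~\ref{TwoHorTubesLem} is proved), and the trajectory lemma is then reproved from scratch in the horizontal setting.

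You instead lift the conjugation to the level of the trajectory lemma itself: apply Lemma~\ref{ForwardTrajectoryLemma} once to $\phi_*=\phi_{\iota(a,b)}$ and transport everything by $\lambda$. This is more economical in that it avoids repeating the induction and limit argument, and it makes the duality between the forward and backward pictures completely transparent. The cost is the index bookkeeping (the reversal $\sigma(s)_m=s_{-1-m}$ and the exponent shift producing the ``$k+1$''), which you handle correctly. One point worth noting for compatibility with the rest of $\S$\ref{RegionIII}: your $H^\pm=\lambda(V_*^\pm)$ agree with the paper's $H^\pm=H_{|b|}(g^\pm)$, since both are forced to equal the two components of $\phi(S)\cap S$ lying in $I\times D_{|\gamma|}^\circ(\pm\gamma)$; hence Remark~\ref{HandVRemark} and the proof of Theorem~\ref{TopologicalConjThm} go through unchanged with your construction.
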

\begin{proof}
This proof differs from Lemma~\ref{ForwardTrajectoryLemma} only in technical details, which we summarize.  To ease notation set $\epsilon_{n}=|b|^{n+1}/|\gamma|^n$.  We first construct a family of $|1/\gamma|$-Lipschitz functions $g^s_{n}:I\to I$, indexed by the set of sequences $s=(\dots s_{-3}s_{-2}s_{-1})$ in the two symbols $\{\pm\}$ and integers $n\geq0$.  When $n=0$, we apply Lemma~\ref{TwoHorTubesLem} with $g:I\to I$ equal to the identically zero function and $\rho=|\gamma|$.  We obtain $|1/\gamma|$-Lipschitz functions $g^\pm:I\to I$ with $g^\pm(t)\in D_{|\gamma|}^\circ(\pm\gamma)$.  Define $H^\pm=H_{|b|}(g^\pm)$ and $g^s_{0}=g^{s_{-1}}$.  We then have $\phi(S)\cap S=H^+\cup H^-$.

Fixing $n\geq0$ and assuming that the functions $g^s_{n}:I\to I$ have been constructed for all sequences $s=(\dots s_{-3}s_{-2}s_{-1})$ in the two symbols $\{\pm\}$, we apply Lemma~\ref{TwoHorTubesLem} with $g=g_n^{\sigma^{-1}(s)}$ and $\rho=\epsilon_{n}=|b|^{n+1}/|\gamma|^n$, where $\sigma^{-1}(\dots s_{-3}s_{-2}s_{-1})=(\dots s_{-4}s_{-3}s_{-2})$.  We obtain $|1/\gamma|$-Lipschitz functions $g^\pm:I\to I$ with $g^\pm(t)\in D_{|\gamma|}^\circ(\pm\gamma)$.  Setting $g^s_{n+1}=g^{s_{-1}}$, we have
\begin{equation}\label{GeneralTwoHorTubes}
\phi(H_{\epsilon_{n}}(g_n^{\sigma^{-1}(s)}))\cap S=H_{\epsilon_{n+1}}(g^s_{n+1})\cup H_{\epsilon_{n+1}}(g^{s'}_{n+1})
\end{equation}
where $s'$ is obtained from $s=(\dots s_{-3}s_{-2}s_{-1})$ by changing $s_{-1}$ from $\pm$ to $\mp$.

An induction argument using $(\ref{GeneralTwoHorTubes})$ shows that 
\begin{equation}\label{HorTrajectorySetsAreTubes}
H_{\epsilon_{n}}(g^{s}_{n})=\{(x,y)\in S\mid \phi^{k+1}(x,y)\in H^{s_{k}}\text{ for all }-(n+1)\leq k\leq -1\},
\end{equation}
from which it follows that $H_{\epsilon_{n+1}}(g^{s}_{n+1})\subseteq H_{\epsilon_{n}}(g^{s}_{n})$.  The limit $g^s(t) = \lim_{n\to+\infty}g^s_{n}(t)$ exists and is $|1/\gamma|$-Lipschitz, and since $H(g^{s})=\bigcap_{n\geq0}H_{\epsilon_{n}}(g^{s}_{n})$ we obtain $(\ref{CompleteHorTrajectorySetsAreCurves})$.  The proof that $J^-(\phi)\cap S=\bigcup_{s=(\dots s_{-3}s_{-2}s_{-1})}H(g^s)$ follows from the same argument used in Lemma~\ref{ForwardTrajectoryLemma}.
\end{proof}

\begin{rem}\label{HandVRemark}
It will be necessary in the proof of Theorem~\ref{TopologicalConjThm} to observe the following relationship between the sets $V^\pm$ and $H^\pm$ occurring in Lemma~\ref{ForwardTrajectoryLemma} and Lemma~\ref{BackwardTrajectoryLemma}.  Since $\phi^{-1}(S)\cap S=V^+\cup V^-$ and $\phi(S)\cap S=H^+\cup H^-$, we obtain $\phi(V^+)\cup \phi(V^-)=H^+\cup H^-$.  By construction, $H^\pm\subseteq I\times D_{|\gamma|}^\circ(\pm\gamma)$ and $V^\pm\subseteq D_{|\gamma|}^\circ(\pm\gamma)\times I$, and it follows easily that $\phi(V^\pm)\subseteq I\times D_{|\gamma|}^\circ(\pm\gamma)$ as well.  As $I\times D_{|\gamma|}^\circ(\pm\gamma)$ are disjoint, we conclude $\phi(V^\pm)=H^\pm$.
\end{rem}

\subsection{The topological conjugacy to the shift map}\label{TopConSect}  We are ready to prove the main result of this section, a non-Archimedean analogue of a theorem on the real H\'enon map due to Devaney-Nitecki \cite{MR539548}.

\begin{thm}\label{TopologicalConjThm}
Let $(a,b)\in\Hcal_\III$, suppose that $a$ is a square in $K$, and let $\phi=\phi_{a,b}:K^2\to K^2$ be the associated H\'enon map.   There exists a homeomorphism $\omega:\{\pm\}^\ZZ\to J(\phi)$ such that $\omega\circ\sigma=\phi\circ\omega$.
\end{thm}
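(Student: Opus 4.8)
The plan is to build $\omega$ directly from the two curve decompositions of $J^\pm(\phi)\cap S$ obtained in Lemma~\ref{ForwardTrajectoryLemma} and Lemma~\ref{BackwardTrajectoryLemma}. Given a bisequence $s=(\dots s_{-2}s_{-1}.s_0s_1s_2\dots)\in\{\pm\}^\ZZ$, write $s^+=(s_0s_1s_2\dots)$ and $s^-=(\dots s_{-3}s_{-2}s_{-1})$ for its forward and backward halves, and consider the vertical curve $V(f^{s^+})$ and the horizontal curve $H(g^{s^-})$. Since $f^{s^+}$ is $|b/\gamma|$-Lipschitz, $g^{s^-}$ is $|1/\gamma|$-Lipschitz, and $|b/\gamma|\cdot|1/\gamma|=|b|/|\gamma|^2<1$ (using $|\gamma|>1$ and $|\gamma|>|b|$), Lemma~\ref{CurvesAndTubesLem} shows that $V(f^{s^+})\cap H(g^{s^-})$ consists of exactly one point, and I would define $\omega(s)$ to be that point. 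Lying on a vertical curve, $\omega(s)\in J^+(\phi)\cap S$; lying on a horizontal curve, $\omega(s)\in J^-(\phi)\cap S$; hence $\omega(s)\in J(\phi)$, so $\omega$ is a well-defined map $\{\pm\}^\ZZ\to J(\phi)$, and by Proposition~\ref{J+,J-} every point of $J(\phi)$ already lies in $S$, so there is nothing lost by working inside $S$.

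Next I would check that $\omega$ is a bijection. For surjectivity, let $p\in J(\phi)$; then $p\in S$, so Lemma~\ref{ForwardTrajectoryLemma} places $p$ on some $V(f^{s^+})$ and Lemma~\ref{BackwardTrajectoryLemma} places $p$ on some $H(g^{s^-})$, and letting $s$ be the concatenated bisequence, the uniqueness clause of Lemma~\ref{CurvesAndTubesLem} forces $\omega(s)=p$. For injectivity, the trajectory descriptions $(\ref{CompleteTrajectorySetsAreCurves})$ and $(\ref{CompleteHorTrajectorySetsAreCurves})$ say precisely that $\phi^k(\omega(s))\in V^{s_k}$ for all $k\ge0$ and $\phi^{k+1}(\omega(s))\in H^{s_k}$ for all $k\le-1$; since $V^+\cap V^-=\emptyset$ and $H^+\cap H^-=\emptyset$, the single point $\omega(s)$ determines every symbol of $s$, so $\omega$ is injective.

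For the conjugacy $\omega\circ\sigma=\phi\circ\omega$, set $p=\omega(s)$, and note $\phi(p)\in J(\phi)\subseteq S$ by $\phi$-invariance of $J(\phi)$. Shifting the forward conditions on $p$, for every $k\ge0$ we have $\phi^k(\phi(p))=\phi^{k+1}(p)\in V^{s_{k+1}}=V^{\sigma(s)_k}$, so by $(\ref{CompleteTrajectorySetsAreCurves})$ the point $\phi(p)$ lies on $V(f^{\sigma(s)^+})$. For the backward conditions, for each $k\le-1$ we need $\phi^{k+1}(\phi(p))=\phi^{k+2}(p)\in H^{\sigma(s)_k}=H^{s_{k+1}}$; writing $j=k+1\le0$, this is the requirement $\phi^{j+1}(p)\in H^{s_j}$, which for $j\le-1$ is one of the backward conditions on $p$, and for $j=0$ reads $\phi(p)\in H^{s_0}$ — which holds because $p\in V^{s_0}$ and $\phi(V^{s_0})=H^{s_0}$ by Remark~\ref{HandVRemark}. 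Thus $\phi(p)$ meets both $V(f^{\sigma(s)^+})$ and $H(g^{\sigma(s)^-})$, and since $\phi(p)\in S$, uniqueness in Lemma~\ref{CurvesAndTubesLem} gives $\phi(p)=\omega(\sigma(s))$.

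Finally, $\omega$ is a continuous bijection from the compact space $\{\pm\}^\ZZ$ onto the Hausdorff space $J(\phi)$, hence a homeomorphism, once continuity is established; this is the step I expect to require the most care, the deeper work having already been absorbed into Lemmas~\ref{TwoVertTubesLem}--\ref{BackwardTrajectoryLemma}. The key estimate is that if $f$ is $C_f$-Lipschitz and $g$ is $C_g$-Lipschitz with $C_f,C_g<1$ (and hence $C_fC_g<1$), then any two points of $V_\delta(f)\cap H_\rho(g)$ lie within $\max(\delta,\rho)$ of one another: writing the two points in tube coordinates and applying the Lipschitz bounds together with the strong triangle inequality yields $|x_1-x_2|\le\max(C_fC_g|x_1-x_2|,C_f\rho,\delta)$, whence $C_fC_g<1$ forces $|x_1-x_2|\le\max(C_f\rho,\delta)$, and symmetrically for the second coordinate. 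Now the inductive constructions in Lemma~\ref{ForwardTrajectoryLemma} and Lemma~\ref{BackwardTrajectoryLemma} make $f^{s^+}_N$ depend only on $s_0,\dots,s_N$ and $g^{s^-}_N$ depend only on $s_{-1},\dots,s_{-N-1}$; so if $s,s'$ agree on all indices $|k|\le N+1$, then $\omega(s)$ and $\omega(s')$ both lie in $V_{1/|\gamma|^N}(f^{s^+}_N)\cap H_{|b|^{N+1}/|\gamma|^N}(g^{s^-}_N)$, and the estimate gives $\|\omega(s)-\omega(s')\|\le\max(1/|\gamma|^N,|b|^{N+1}/|\gamma|^N)\to0$ as $N\to\infty$, which is (uniform) continuity. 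As a byproduct, the homeomorphism $\omega$ shows $J(\phi)$ is nonempty and perfect, and the deferred claim in Theorem~\ref{JuliaSetBoundedNonempty}~{\bf (c)} that $J^\pm(\phi)$ are unbounded then follows by tracking the backward orbit of a point of $J(\phi)$ out to infinity through $S_R^-$ via Proposition~\ref{Filtration}.
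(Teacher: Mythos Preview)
Your proof of the theorem itself is correct and follows essentially the same approach as the paper: define $\omega(s)$ as the unique intersection of the vertical and horizontal curves via Lemma~\ref{CurvesAndTubesLem}, read off bijectivity from the trajectory descriptions, and verify the conjugacy using Remark~\ref{HandVRemark}. The only difference is in the continuity step: the paper proves $\omega^{-1}$ is continuous (cylinder sets map to intersections of open tubes) and then invokes the compact-to-Hausdorff lemma, whereas you prove $\omega$ is continuous directly via your tube-intersection diameter estimate and invoke the same lemma in the other direction. Both work; your route is slightly more quantitative but requires the extra estimate.

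Your closing remark on the unboundedness of $J^\pm(\phi)$, however, does not work as stated. A point of $J(\phi)$ has bounded two-sided orbit by definition, so ``tracking the backward orbit of a point of $J(\phi)$ out to infinity'' is impossible. The paper's argument is different: it picks a point such as $(0,g^s(0))$, which lies on $H(g^s)\subseteq J^-(\phi)$ but not in $J(\phi)$, since $0\notin D_{|\gamma|}^\circ(\pm\gamma)$ means it lies on no vertical curve $V(f^{s'})$. Such a point is therefore outside $J^+(\phi)$, so its \emph{forward} orbit is unbounded; by $\phi$-invariance of $J^-(\phi)$ this unbounded forward orbit lies entirely in $J^-(\phi)$, proving $J^-(\phi)$ is unbounded (and symmetrically for $J^+(\phi)$).
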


\begin{proof}
Given $s=(\dots s_{-2}s_{-1}s_0s_1s_2\dots)\in\{\pm\}^\ZZ$, since the functions $f^{(s_0s_1s_2\dots)}:I\to I$ and $g^{(\dots s_{-3}s_{-2}s_{-1})}:I\to I$ are $|b/\gamma|$-Lipschitz and $|1/\gamma|$-Lipschitz, respectively, and $|b/\gamma||1/\gamma|=|b|/|\gamma|^2<1$, Lemma~\ref{CurvesAndTubesLem} implies that 
$$
H(g^{(\dots s_{-3}s_{-2}s_{-1})})\cap V(f^{(s_0s_1s_2\dots)})=\{\omega(s)\}
$$ 
for some point $\omega(s)\in S$.  This defines a function $\omega:\{\pm\}^\ZZ\to J(\phi)$ since, by Lemma~\ref{ForwardTrajectoryLemma} and Lemma~\ref{BackwardTrajectoryLemma}, $\omega(s)\in(J^-(\phi)\cap S)\cap(J^+(\phi) \cap S)=J(\phi)$. 

If $(x,y)\in J(\phi)$, then $(\ref{FirstTwoVerticalTubes})$ implies that every point in its orbit is contained in one of the two sets $V^\pm$, and similarly, every point in its orbit is contained in one of the two sets $H^\pm$.  Define $s=(s_k)\in\{\pm\}^\ZZ$ by $\phi^k(x,y)\in V^{s_k}$ for $k\geq0$, and $\phi^{k+1}(x,y)\in H^{s_{k}}$ for $k\leq-1$.  The function $(x,y)\mapsto s$ defines an inverse $\omega^{-1}:J(\phi)\to\{\pm\}^\ZZ$ by Lemma~\ref{ForwardTrajectoryLemma} and Lemma~\ref{BackwardTrajectoryLemma}, and so $\omega$ is bijective.

A neighborhood base for the topology on $\{\pm\}^\ZZ$ is composed of cylinder sets
$$
\Sigma_{t_{-N},\dots,t_N}=\{s\in \{\pm\}^\ZZ\mid s_k=t_k\text{ for all }|k|\leq N\}.
$$
It follows from $(\ref{TrajectorySetsAreTubes})$ and $(\ref{HorTrajectorySetsAreTubes})$ that $\omega(\Sigma_{t_{-N},\dots,t_N})$ is the intersection of a vertical tube and a horizontal tube of positive radii.  As tubes of positive radii are topologically open, this shows that $\omega^{-1}$ is continuous.  A standard exercise states that a continuous bijection of compact sets has a continuous inverse, and thus $\omega$ is a homeomorphism.

It remains to show that $\omega\circ\sigma=\phi\circ\omega$.  Fix $s\in\{\pm\}^\ZZ$ and let $t=\sigma(s)$; thus $t_k=s_{k+1}$.  Let $(x,y)=\omega(s)$.  By Lemma~\ref{ForwardTrajectoryLemma} and Lemma~\ref{BackwardTrajectoryLemma}, we have $\phi^k(x,y)\in V^{s_k}$ for all $k\geq0$ and $\phi^{k+1}(x,y)\in H^{s_{k}}$ for all $k\leq-1$.  Thus $\phi^k(\phi(x,y))\in V^{t_k}$ for all $k\geq0$, and $\phi^{k+1}(\phi(x,y))\in H^{t_{k}}$ for all $k\leq-2$.  Using Remark~\ref{HandVRemark}, we also have $\phi(x,y)\in\phi(V^{s_0})=H^{s_0}=H^{t_{-1}}$.  By Lemma~\ref{ForwardTrajectoryLemma} and Lemma~\ref{BackwardTrajectoryLemma} we conclude 
$$
\phi(x,y)\in H(g^{(\dots t_{-3}t_{-2}t_{-1})})\cap V(f^{(t_0t_1t_2\dots)})
$$ 
and therefore $\phi(x,y)=\omega(t)$; in other words, $\phi(\omega(s))=\omega(\sigma(s))$.
\end{proof}

\begin{rem}
We can now show that the forward and backward filled Julia sets  $J^\pm(\phi)$ are unbounded, finishing the proof of Proposition~\ref{JuliaSetBoundedNonempty} part {\bf (c)}.  As described in Remark~\ref{HandVRemark}, we have 
$$
J(\phi)\subseteq V^+\cup V^-\subseteq(D_{|\gamma|}^\circ(\gamma)\times I)\cup(D_{|\gamma|}^\circ(-\gamma)\times I)
$$ 
In particular, since $0\not\in D_{|\gamma|}^\circ(\pm\gamma)$, given any $s\in\{\pm\}^\ZZ$, the point $(0,g^s(0))$ is in $H(g^s)\subseteq J^-(\phi)$ but is not in $J(\phi)$.  As $J^-(\phi)$ is $\phi$-invariant, the entire orbit of $(0,g^s(0))$ is contained in $J^-(\phi)$, and since this orbit is unbounded, $J^-(\phi)$ is unbounded; similarly, $J^+(\phi)$ is unbounded.
\end{rem}



\def\cprime{$'$}

\end{document}